\documentclass[11pt,a4paper]{amsart}
\usepackage{amssymb, amsmath, amsthm}

\usepackage{pstricks}
\usepackage{graphicx}
\usepackage{mathrsfs}
\usepackage{extarrows}
\usepackage{amsthm}
\usepackage{amssymb}
\usepackage{xypic,enumerate, comment, enumitem}
\usepackage[all,cmtip]{xy}
\usepackage{fancyvrb}
\usepackage{longtable}
\usepackage{array}
\newcolumntype{L}{>{$}l<{$}} 
\allowdisplaybreaks

\usepackage{hyperref}
\hypersetup{
    colorlinks=true,       
    linkcolor=blue,        
    citecolor=blue,        
    filecolor=blue,        
    urlcolor=blue          
}



%
%
%
%
\theoremstyle{plain}

\newtheorem{theorem}{Theorem}[section]
\newtheorem{proposition}[theorem]{Proposition}
\newtheorem{corollary}[theorem]{Corollary}
\newtheorem{lemma}[theorem]{Lemma}
\newtheorem{conjecture}[theorem]{Conjecture}
\newtheorem*{theorem*}{Theorem}
\newtheorem*{proposition*}{Proposition}
\newtheorem*{corollary*}{Corollary}
\newtheorem*{lemma*}{Lemma}
\newtheorem*{conjecture*}{Conjecture}
\newtheorem*{thmA}{Theorem A}
\newtheorem*{thmB}{Theorem B}
\newtheorem*{thmC}{Theorem C}

\theoremstyle{definition}
\newtheorem{definition}[theorem]{Definition}
\newtheorem{example}[theorem]{Example}

\newtheorem*{definition*}{Definition}
\newtheorem*{example*}{Example}

\theoremstyle{remark}
\newtheorem{remark}[theorem]{Remark}
\newtheorem*{remark*}{Remark}

%
%

\newcommand{\HH}{\operatorname{H}}

\newcommand{\p}{\pi}

\newcommand{\G}{\operatorname{G}}

\newcommand{\bQ}{\mathbb{Q}}

\newcommand{\bP}{\mathbb{P}}

\newcommand{\cC}{\mathcal{C}}
\newcommand{\cO}{\mathcal{O}}
\newcommand{\cE}{\mathcal{E}}

\newcommand{\cI}{\mathcal{I}}
\newcommand{\cF}{\mathcal{F}}
\newcommand{\cS}{\mathcal{S}}
\renewcommand{\cH}{\mathcal{H}}
\newcommand{\cV}{\mathcal{V}}
\newcommand{\cG}{\mathcal{G}}
\newcommand{\cA}{\mathcal{A}}
\newcommand{\cM}{\mathcal{M}}
\newcommand{\cU}{\mathcal{U}}

\renewcommand{\cR}{\mathcal{R}}
\newcommand{\cX}{\mathcal{X}}

\newcommand{\cQ}{\mathcal{Q}}

\newcommand{\tV}{\tilde{V}}
\newcommand{\tF}{\tilde{F}}
\newcommand{\tC}{\tilde{C}}

\newcommand{\Pic}{\operatorname{Pic}}

\newcommand{\pr}{\operatorname{pr}}

\newcommand{\id}{\operatorname{id}}

\newcommand{\Bl}{\operatorname{Bl}}

\renewcommand\bar\overline
\renewcommand\tilde\widetilde








\newcommand{\Sym}{\operatorname{Sym}}
\renewcommand{\geq}{\geqslant}
\renewcommand{\leq}{\leqslant}

\newcommand{\PP}{\bP}
\newcommand{\OO}{\mathcal{O}}

\newcommand{\QQ}{\mathbf{Q}}
\newcommand{\ZZ}{\mathbf{Z}}
\newcommand{\CC}{\mathbf{C}}
\newcommand{\VV}{\mathbf{V}}

\newcommand{\PGL}{\operatorname{PGL}}

\newcommand{\CH}{\operatorname{CH}}

\title{Geometry of lines on a cubic fourfold}
\author{Frank Gounelas}
\address{Georg-August-Universit\"at G\"ottingen, Fakult\"at f\"ur Mathematik und Informatik, Bunsenstr. 3-5, 37073
G\"ottingen, Germany}
\email{gounelas@mathematik.uni-goettingen.de}

\author{Alexis Kouvidakis}
\address{Dept. of Mathematics and Applied Mathematics, University of Crete, 70013 Heraklion, Greece.}
\email{kouvid@uoc.gr}

\date{\today}
\subjclass[2020]{14J70, 14J35, 14H10, 14M15}

\begin{document}

\maketitle
\begin{abstract}
For a general cubic fourfold $X\subset\PP^5$ with Fano scheme of lines $F$, we prove a number of properties of the
universal family of lines $I\to F$ and various subloci. We first describe the moduli and ramification
theory of the genus four fibration $p:I\to X$ and explore its relation to a birational model of $F$ in $I$. The main
part of the paper is devoted to describing the locus $V\subset F$ of triple lines, i.e., the fixed locus of the Voisin
map $\phi:F\dashrightarrow F$, in particular proving it is an irreducible projective singular surface of class
$21\mathrm{c}_2(\cU_F)$ and detailing its intersection with the locus $S$ of second type lines. A consequence of the
analysis of the singularities of $V$ is a geometric proof of the fact that if $X$ is very general, then the number of
singular (necessarily 1-nodal) rational curves in $F$ of primitive class is 3780.
\end{abstract}

\setcounter{tocdepth}{1}
\tableofcontents

\section{Introduction}\label{sec:introduction}

Let $X\subset\PP^5_{\CC}$ be a smooth cubic fourfold and $F\subset \G(2,6)$ its Fano scheme of lines. The geometry of
these varieties has received a lot of attention since the beginning of the 20th century and in particular since the
foundational papers \cite{cg, ak}. Consider the universal family $I=\PP(\cU_F)$ of lines, for $\cU_F$ the universal
subbundle of the Grassmannian $\G(2,6)$ restricted to $F$, with the two projections
\[
\xymatrix{
    I\ar[d]^q\ar[r]^p & X \\
    F. &
}
\]
It turns out that the morphism $p$ is a fibration of $(2,3)$-complete intersections in $\PP^3$ associated to the
geometry of $X$, and all but finitely many fibres are genus 4 curves. As such, each fibre $C_x=p^{-1}(x)\subset I$,
which parametrises lines in $X$ through $x$, comes equipped with two $g^1_3$ linear systems by restricting the rulings
of the corresponding quadric. The aim of this paper is to study the fibrations $p,q$ and the geometry and intersection
theory of various geometric loci in each variety.

Recall that there are two types of points $[\ell]\in F$, i.e., lines $\ell\subset X$, depending on the decomposition of
the normal bundle $N_{\ell/X}$. The generic line is called of \textit{first type}, whereas there is a surface $S\subset
F$ parametrising those of \textit{second type} (see Section \ref{sec:background} for formal definitions).

Our first goal is to further expand in Section \ref{sec:curve of lines} on the geometry of the
curves $C_x$, which we achieve by studying properties of the fibration $p$ on $I$ but also the induced restricted
map over $q^{-1}(S)\subset I$. To name one example, we obtain in Corollary \ref{cor:general} that the fibration $p$ is a
fibration in general genus 4 curves.

Moving now to geometric constructions on $F$, Voisin \cite{voisinintrinsic} defined a map
\[\phi: F\dashrightarrow F\]
taking a point $[\ell]$ corresponding to a line $\ell\subset X$ to the \textit{residual line} $[\ell']$, 
in the sense that, if $\ell$ is general, there is a unique $\Pi_\ell=\PP^2$ so that $X\cap\Pi_\ell=2\ell+\ell'$. This
map has been studied in detail in \cite{amerik} and if $X$ does not contain any planes (the generic case, which we are
in), is resolved by a single blowup of the locus $S$. This gives impetus to the construction of the following map
\begin{align*}
\psi:&F\dashrightarrow X \\
&[\ell]\mapsto \ell\cap\ell'.
\end{align*} 
The locus of indeterminacy of $\psi$ also includes the locus $V\subset F$ of \textit{triple lines}, i.e., lines for
which there is a $\Pi_\ell=\PP^2$ so that $X\cap\Pi_\ell=3\ell$. To the best of our knowledge, this locus has so far
eluded study. Note that it can be viewed as the closure of the fixed locus
\[\operatorname{Fix}(\phi)\subset F\] 
of the Voisin map $\phi$. We will prove that it consists of both first and second type lines. Using results of Section
\ref{sec:curve of lines}, we may interpret a general point $[\ell]\in V$ as a triple ramification point of one of the
$g^1_3$'s on $C_x$ for $x\in\ell$. 

The main part of this paper is devoted, in Section \ref{sec:surface V}, to the study of various universal families
related to $V, S$ and $F$. We conclude the following, where $\cU_F$ is the restriction of the universal bundle from the
Grassmannian and $H_F=\mathrm{c}_1(\cU_F^{\vee})$ the restriction of the Pl\"ucker polarisation.

\begin{thmA}
If $X\subset\PP^5$ is a general cubic, then the locus $V\subset F$ of triple lines is a projective irreducible
general type singular surface, whereas the locus $C=S\cap V$ is an irreducible nodal curve. Their classes are
\begin{align*}
[V]&=21{\mathrm c}_2(\cU_F) \in \CH^2(F),\\
[C]&=\frac{35}{2}H_F^3 \in \HH_2(F,\ZZ).
\end{align*}
\end{thmA}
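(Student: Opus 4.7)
The plan is to interpret the triple-line condition via explicit sections on the projective bundle $\pi\colon \PP(\cQ_F)\to F$ and compute $[V]$ by a Chern-class calculation with pushforward, then restrict the picture to $S$ to obtain $[C]$.

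For $[\ell]\in F$ and $v\in \cQ_\ell=V_6/\cU_\ell$, I expand the cubic $f$ defining $X$ on the plane $\cU_\ell+\CC v$ in the coordinate $u$ along $v$: since $f|_\ell=0$, this yields $f|_{\Pi}=u\,f_{(1)}(v)+u^2 f_{(2)}(v)+u^3 f_{(3)}$ with $f_{(j)}(v)$ of degree $3-j$ in $\cU_\ell$ and degree $j$ in $v$. The Voisin-plane condition is $f_{(1)}=0$, and the residual line further degenerates to $\ell$, making $\ell$ a triple line, precisely when $f_{(2)}=0$. Globalising gives a linear map $\mu_1\colon \cQ_F\to \Sym^2\cU_F^\vee$ and a quadratic map $\mu_2\colon \Sym^2\cQ_F\to \cU_F^\vee$; pulling back to $\PP(\cQ_F)$ and restricting to the tautological $\cO(-1)\subset \pi^*\cQ_F$ yields sections of $\cO(1)\otimes\pi^*\Sym^2\cU_F^\vee$ (rank $3$) and $\cO(2)\otimes\pi^*\cU_F^\vee$ (rank $2$), whose common zero locus has expected codimension $5$ and maps birationally onto $V\subset F$.

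The class of $V$ is then the pushforward of $c_3(\cO(1)\otimes\pi^*\Sym^2\cU_F^\vee)\cdot c_2(\cO(2)\otimes\pi^*\cU_F^\vee)$. Expanding via the splitting principle in $\xi=c_1(\cO_{\PP(\cQ_F)}(1))$ and using $\pi_*\xi^{3+k}=s_k(\cQ_F)$, together with the identities $s_1(\cQ_F)=-H_F$ and $s_2(\cQ_F)=c_2(\cU_F)$ (the latter via $c(\cU_F)c(\cQ_F)=1$), yields $[V]=21\,c_2(\cU_F)$. Irreducibility of $V$ will follow from that of the main component of this zero locus in $\PP(\cQ_F)$, singularity is detected along $C$ where both sections simultaneously degenerate, and the general-type property is checked via adjunction from the Chern-class data and the intrinsic geometry of $F$.

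For $[C]$, I restrict to the locus $S$ where $\mu_1$ has rank $\le 2$. Generically on $S$, $K:=\ker\mu_1|_S$ is a rank-$2$ subbundle of $\cQ_F|_S$, and $\widetilde S:=\PP(K)\subset \PP(\cQ_F)|_S$ is a $\PP^1$-bundle of dimension $3$. The $\mu_2$-section restricts on $\widetilde S$ to a section of $\cO_{\widetilde S}(2)\otimes\pi^*\cU_F^\vee$ (rank $2$), whose zero locus $\widetilde C$ maps generically injectively onto $C$. Computing $[\widetilde C]=c_2(\cO_{\widetilde S}(2)\otimes\pi^*\cU_F^\vee)$ and pushing forward along $\widetilde S\to S\hookrightarrow F$ using the Segre classes of $K$ and the class $[S]\in \CH^2(F)$ yields $[C]=\tfrac{35}{2}H_F^3$. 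Irreducibility and the nodal nature of $C$ will be verified by a local analysis at its points, based on the deformation of the pencil of Voisin planes at second-type lines.

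The main obstacle is controlling the excess locus above $S$. The extra component of $\{\mu_1=0\}\subset \PP(\cQ_F)$ has codimension $4$ and so contributes nothing to the codimension-$5$ class computing $[V]$; however, this very component is what produces $C$, and extracting $[C]$ cleanly requires transversality for the restricted $\mu_2$-section together with precise knowledge of the kernel bundle $K$ on $S$. A secondary challenge is proving global irreducibility of $V$ when the simultaneous vanishing of two sections could in principle split.
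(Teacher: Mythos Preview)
Your Chern–class computation of $[V]$ is correct and takes a cleaner route than the paper. You work on the $\PP^3$-bundle $\PP(\cQ_F)\to F$ with two explicit sections, whereas the paper first identifies the zero locus of your $\mu_1$-section with $\tilde F=\Bl_S F$, embeds it in $\G(2,6)\times\G(3,6)$, realises $\tilde V$ as the zero locus of a rank-$2$ bundle there, and evaluates its $c_2$ via Amerik's and Huybrechts' formulas for $\pi_2^*c_1(\cU_2)$ and $\pi_3^*c_1(\cU_3)$ on $\tilde F$. Your approach avoids these external inputs; the paper's buys a direct description of $N_{\tilde V/\tilde F}$ and hence $K_{\tilde V}=3H_{\tilde V}$ (giving general type), which you only gesture at. For $[C]$ the two approaches converge: your $\widetilde S=\PP(K)$ is exactly the exceptional divisor $E_S$, and the paper likewise computes $[\tilde C]=[\tilde V]\cdot E_S$ and pushes forward. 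Your missing ingredient $c_1(K)$ is equivalent to identifying $\cO_{\PP(\cQ_F)}(1)|_{\tilde F}=2H-E_S$, which the paper extracts from those same Amerik--Huybrechts formulas.

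The real gap is in the qualitative assertions, which you defer with ``will follow'' and ``local analysis''. These are not routine. Irreducibility of $V$ does \emph{not} follow from your Chern-class setup: the simultaneous zero locus could a priori split, and the paper devotes an entire argument (a Barth--Van de Ven style connectedness proof via Stein factorisation of the universal family $\tilde\cV\to|\cO(3)|$, reduced to an explicit rank verification for a $10\times 12$ matrix) to rule this out. Likewise, that $C$ is \emph{nodal} is the hardest part of the theorem: the paper shows the universal curve $\cC=\cS\cap\cV$ over $|\cO(3)|$ has normal crossings by identifying its fibres over $\cG_\Delta$ with a cone over the resultant of two binary quadrics, explicitly normalising that resultant, and checking the two sheets meet transversally. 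The singularity of $V$ is then deduced from the node count ($p_a(C)-p_a(\tilde C)=3780$). None of this is visible from your degeneracy-locus picture, and ``local analysis at its points'' does not capture the global input needed. You should either supply arguments of comparable strength or explicitly restrict your claim to the class computations.
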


In particular, the class of $V$ is a multiple of the generator (cf.\ \cite{voisinconiveau}) of the known extremal ray of the
effective cone $\operatorname{Pseff}^2(F)$. Much of Section \ref{sec:surface V} is devoted to studying the strict
transform $\tV$ of $V$ in $\Bl_S(F)$, which we prove is the vanishing locus of a section of a tautological bundle on the
product of Grassmannians and is a resolution of singularities of $V$. As a corollary to the analysis of the
singularities of $V$, we obtain the following, which to the best of our knowledge was not previously known (despite
similar results, e.g., \cite[Theorem 1.3]{no}).

\begin{thmB}
    If $X\subset\PP^5_\CC$ is a very general cubic, then the number of 
    \begin{enumerate}
        \item nodal rational curves in $F$ of primitive class $\beta\in\HH_2(F,\ZZ)$,
        \item singularities of $V$, all of which are isolated non-normal with degree two normalisation,
        \item nodes of $C=S\cap V$,
    \end{enumerate}
    are all equal to 3780.
\end{thmB}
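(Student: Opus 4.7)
The plan is to establish Theorem B by computing item (2) directly and then realising items (1) and (3) via explicit geometric bijections tied to the smooth resolution $\tV \subset \Bl_S(F)$ constructed in Section~\ref{sec:surface V}. The key input is that $\tV$ is presented as the zero locus of a section of a tautological rank-$2$ bundle on a product of Grassmannians, so $\tV$ is smooth and all of its Chern classes are computable; moreover, under the hypothesis that all singularities of $V$ are isolated non-normal with degree-$2$ normalisation, the resolution map $n\colon\tV\to V$ coincides with the normalisation.

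To count $\Sing(V)$, observe that $n$ is generically injective and exactly $2$-to-$1$ over each singular point, so the number of singular points equals half the degree of the double-point class $\mathbb D(g)\in A_0(\tV)$ of the composition $g\colon \tV\hookrightarrow \Bl_S(F)\to F$. Fulton's double-point formula gives
\[
2\cdot\#\Sing(V) \;=\; \deg g^*[V] \;-\; \deg\bigl(c_2(g^*T_F - T_{\tV})\cap [\tV]\bigr).
\]
Using $c_1(T_F)=0$ (since $F$ is hyperk\"ahler), $[V]=21\,c_2(\cU_F)$ from Theorem~A, the classical Chern numbers of $F$, and the Chern data of $\tV$ extracted from its Grassmannian presentation, the right-hand side evaluates to $7560$, yielding $\#\Sing(V)=3780$.

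For the bijection $\Sing(V)\leftrightarrow \text{Nodes}(C)$: at a non-normal singularity $p=[\ell]$ of $V$, the two sheets meeting at $p$ correspond to two distinct triple-tangent plane configurations $(\ell,\Pi_1)$ and $(\ell,\Pi_2)$ for the same line; a normal-bundle argument using the results of Section~\ref{sec:curve of lines} forces such $\ell$ to be of second type, so $p\in C = S\cap V$, and the two sheets of $V$ meet the smooth divisor $S$ transversely in two arcs through $p$, producing a node of $C$ at $p$. Conversely, at a smooth point of $V$ the intersection $V\cap S$ is transverse on a single sheet of $V$, so nodes of $C$ cannot arise outside $\Sing(V)$. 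For the bijection with (1): to each $p=[\ell]\in\Sing(V)$ I associate the rational curve $R_p\subset F$ swept out by the residual lines of $X\cap\Pi_t$ as $\Pi_t$ ranges over the pencil of planes through $\ell$ spanned by $\Pi_1,\Pi_2$; at the endpoints $t=1,2$ one has $X\cap\Pi_i=3\ell$, so both collapse back to $[\ell]$, producing a node of $R_p$ at $p$. Intersecting with $H_F$ shows $R_p$ lies in the primitive curve class, and conversely any $1$-nodal rational curve of primitive class recovers a point of $\Sing(V)$ by applying the Voisin map to the two local branches at its node.

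The main obstacle is the surjectivity half of the bijection with (1): one must show that every $1$-nodal rational curve of primitive class arises from this pencil construction, rather than from some other degeneration. This requires a deformation-theoretic analysis of the Hilbert scheme of primitive-class rational curves in $F$ together with a careful global description of the Voisin map at its indeterminacy, so as to rule out stray families. By contrast, the double-point computation for (2) is technically involved but conceptually routine given the Grassmannian presentation of $\tV$ and Theorem~A.
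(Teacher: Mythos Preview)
Your approach differs from the paper's in a genuine way: the paper computes the number $3780$ as the difference of arithmetic genera $p_a(C)-p_a(\tilde C)=8506-4726$ and then proves that $C$ is nodal by an explicit analysis of the resultant of the two quadrics $Q_0,Q_1$ governing triple tangency for a second-type line (Proposition~\ref{prop:cCnormalcrossings}, Theorem~\ref{theo:nodesC}); you instead propose to extract $3780$ from the double-point class of $g\colon\tV\to F$. Either numerical route is plausible, and the double-point formula is an attractive shortcut given that $N_{\tV/\tF}$ and $K_{\tV}$ are already computed.

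However, there are two real gaps. First, you treat the clause ``all of which are isolated non-normal with degree two normalisation'' as a \emph{hypothesis}, but it is part of the assertion. The double-point formula only returns the expected length of the double-point scheme; it does not by itself show that $\tV\to V$ is injective outside a finite set, that each fibre over a non-injective point has exactly two reduced points, or that the local contribution at each such point is $1$. The paper obtains all of this from the local description around Equation~\eqref{eq:Q01}: for a second-type line $\ell$, the triple tangent planes are the common roots of two plane conics $Q_0,Q_1$, hence number at most two, and a genericity argument rules out a common double root (Lemma~\ref{lem:tVandC}). Without this, your count of $7560/2$ is only a virtual number, and the bijection $\Sing(V)\leftrightarrow\text{Nodes}(C)$ is unfounded (you also need that $V$ is smooth along the smooth locus of $C$, which the paper checks separately).

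Second, and more seriously, your plan for item~(1) is missing the key external input. The paper's bijection rests on Theorem~\ref{thm:osy} (Oberdieck--Shen--Yin): for very general $X$, \emph{every} rational curve of class $\beta$ is of the form $\tilde\phi(E_{S,s})$ for a unique $s\in S$. This is exactly the surjectivity you flag as the main obstacle, and it is not something a ``deformation-theoretic analysis of the Hilbert scheme'' will produce without essentially reproving that theorem. The paper also invokes \cite{no} to know that such curves are at worst $1$-nodal. Your reverse map, ``apply the Voisin map to the two local branches at its node'', does not make sense as stated: the Voisin map goes the wrong way, and what one actually does is use \cite{osy} to write the curve as $\tilde\phi(E_{S,s})$ and then observe that a node forces two points of $E_{S,s}$ to have the same residual line, which (again by the $Q_0,Q_1$ analysis) can only happen when both residuals equal $\ell$ itself, i.e.\ $s\in\Sing(V)$. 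Finally, your pencil $\langle\Pi_1,\Pi_2\rangle$ \emph{is} the fibre $E_{S,[\ell]}$, not a sub-pencil of it; recognising this makes the identification with the paper's construction immediate.
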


As a further application of the analysis above, in Section \ref{sec:ramloci} we study the loci \[R, R', N\subset I\]
which are the closure of the locus of ramification, residual to all ramification and triple ramification points
respectively, of the two $g^1_3$'s on the smooth fibres $C_x$. The projection $R'\to F$ is generically finite of degree
16, the degree of the Voisin map $\phi$. On the other hand, we compute that the map $\psi$ from above has degree 24, and
in Corollary \ref{cor:resolvepsitilde} we prove that it is resolved by
blowing up $S$ and then the strict transform of $V$. 

Finally, putting everything together, we prove the following in Theorem \ref{thm:Bl=R} and compute in Proposition
\ref{prop:classR} the classes of the above loci in the Chow group of $I$ in terms of the tautological line bundle
$l=\OO_I(1)=p^*\OO_X(1)$ and the pullback $q^*H_F$ of the class of the Pl\"ucker polarisation (or the relative canonical
divisor $\omega_p$ of the fibration $p:I\to X$).

\begin{thmC}
If $X\subset\PP^5$ is a general cubic fourfold, then the locus of ramification points $R$ is irreducible and smooth outside $q^{-1}(S)\subset R$ and has
normalisation the blowup of $F$ first at $S$ and then the strict transform of $V$. The classes of $R, R'$ in $\CH^1(I)$
and $N$ in $\CH^2(I)$ are given as follows
\begin{align*}
    [R]&=4q^*H_F+l=4\mathrm{c}_1(\omega_p)-3l,\\
    [R']&=4q^*H_F+16l=4\mathrm{c}_1(\omega_p)+12l,\\
    [N]&=21(l^2-lq^*H_F)=21(2l^2-l\mathrm{c}_1(\omega_p)).
\end{align*}
\end{thmC}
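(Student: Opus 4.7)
The plan is to compute the three classes by intersection-theoretic tests against two natural curves in $I$, and to separately establish the irreducibility and normalisation of $R$ via an explicit iterated blowup construction.

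For $X$ very general, $\Pic(F) = \ZZ\cdot H_F$, and since $q\colon I\to F$ is a $\PP^1$-bundle one has $\Pic(I) = \ZZ\cdot l \oplus \ZZ\cdot q^*H_F$. A divisor class on $I$ is therefore determined by its intersection with the two test curves $f = q^{-1}([\ell])\cong\PP^1$ (a general fibre of $q$) and $C_x = p^{-1}(x)$ (a general fibre of $p$). We have $l\cdot f = 1$, $q^*H_F\cdot f = 0$, $l\cdot C_x = 0$ (since $l = p^*\OO_X(1)$), and $q^*H_F\cdot C_x = 6$, the last being the Pl\"ucker degree of the curve $q_*[C_x]$ of lines through $x$, which equals the degree of $C_x\subset \PP(T_xX)=\PP^3$. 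For $[R]$, the intersection $R\cdot f$ counts the $x\in\ell$ at which $\ell$ is a ramification of a $g^1_3$ on $C_x$, equivalently the number of planes $\Pi\supset\ell$ with $\Pi\cap X = 2\ell + \ell'$; for a general first-type line this is $1$. The intersection $R\cdot C_x = 24$ is the total ramification of the two $g^1_3$'s on the genus-$4$ curve $C_x$ (twelve from each, by Riemann–Hurwitz). Writing $[R] = al + bq^*H_F$ gives $a=1$ and $6b=24$, hence $[R] = l + 4q^*H_F$.

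For $[R']$, the birational involution $(x,\ell)\mapsto (x,\ell')$ over $X$ swapping a ramification line with its Voisin residual gives $R'\cdot C_x = 24$, while $R'\cdot f$ counts the preimages $\phi^{-1}([\ell])$, namely $\deg\phi = 16$. Hence $[R'] = 16l + 4q^*H_F$. For $[N]$, I would argue that $N = q^{-1}(V)$ scheme-theoretically: for a triple line $\ell\in V$ with distinguished plane $\Pi$, the plane $\Pi$ is independent of $x\in\ell$ and for each such $x$ cuts the unique ruling of $Q_x$ having third-order contact with $C_x$ at $[\ell]$; conversely, a triple ramification at $(x,\ell)$ forces the associated plane to have triple intersection with $X$ along $\ell$. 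Reducedness of the scheme structure follows from the transverse vanishing of the tautological section defining $\tilde V$ in Section 4. Thus $[N] = q^*[V] = 21\,q^*c_2(\cU_F)$ by Theorem A, and the Grothendieck relation on $\PP(\cU_F)$ (together with $c_1(\cU_F) = -H_F$) yields the stated expression. The reformulations in terms of $c_1(\omega_p)$ follow from the identity $c_1(\omega_p) = l + q^*H_F$, verified either on fibres ($K_{C_x}$ has degree $6$) or from the projective-bundle formula for $K_{I/F}$ combined with $K_F = 0$.

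For irreducibility and normalisation: since $q|_R\colon R\to F$ has generic degree one and $F$ is irreducible, so is $R$. To exhibit the normalisation, combine Amerik's result (the single blowup $b_1\colon\Bl_SF\to F$ makes $\phi$ a morphism $\bar\phi$) with the rational map $\Bl_SF\dashrightarrow I$ defined by $\ell\mapsto (\ell\cap\bar\phi(\ell),\,\ell)$. This is a morphism off the strict transform $\tilde V\subset \Bl_SF$ of $V$, since there and only there $\bar\phi(\ell) = \ell$ and the intersection $\ell\cap\bar\phi(\ell)$ is not a single point. A local analysis near $\tilde V$, using the description of $\tilde V$ as a smooth zero locus of a tautological section on a product of Grassmannians from Section 4, shows that the further blowup $b_2\colon\Bl_{\tilde V}\Bl_SF\to \Bl_SF$ resolves the remaining indeterminacy, producing a morphism $\tilde F:=\Bl_{\tilde V}\Bl_SF\to I$ with image $R$. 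The map $\tilde F\to R$ is proper and birational with $\tilde F$ smooth hence normal, so it is the normalisation; smoothness of $R$ off $q^{-1}(S)$ follows because the iterated blowup is an isomorphism over $F\setminus S$. The main technical obstacle I anticipate is the local resolution at $\tilde V$, which requires precise control over how the equations of $R$ interact with the blowup — this should follow from the explicit description of $\tilde V$ developed in Section 4.
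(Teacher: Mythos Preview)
Your computation of the classes $[R]$ and $[R']$ via the two test curves $f=q^{-1}([\ell])$ and $C_x=p^{-1}(x)$ is correct and is genuinely different from both methods in the paper. The paper's main proof (Proposition~5.4) expresses $\tilde R,\tilde R'$ as intersections of projectivised subbundles inside $\PP(\pi_3^*\cU_3)$ over $\tilde F$, applies an excess-intersection formula, and pushes forward using Amerik's and Shen--Vial's numerics; the appendix recomputes via admissible covers on $\overline{\cM}_4$. Your test-curve argument is more elementary and direct, at the price of importing $\deg\phi=16$ from Amerik rather than recovering it as an output. Two minor corrections: the map $(x,\ell)\mapsto(x,\ell')$ is a birational correspondence $R\to R'$ over $X$ but not an involution (the argument only needs birationality to conclude $R'\cdot C_x=R\cdot C_x$); and your computation is carried out for $X$ very general (so that $\Pic(F)=\ZZ H_F$), so you should add the standard specialisation remark to pass to general $X$. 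For $[N]$ your approach coincides with the paper's.

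For the normalisation, your outline is close in spirit to the paper's Theorem~5.2, but the paper avoids the local resolution analysis you anticipate as an obstacle. Instead of blowing up $\tilde V$ and then verifying that the rational map extends, the paper defines $\tilde R=\{(p,(\ell,\ell',\Pi)):p\in\ell\cap\ell'\}\subset\tilde I$ directly, and proves it is smooth by exhibiting its universal version $\tilde{\cR}_{\cG}\subset\PP^5\times\cG$ as an iterated $\PP^1$- and Grassmannian-bundle over $\PP^5$; generic smoothness then gives smoothness of the fibre $\tilde R_X$ over a general cubic. Once $\tilde R$ is known to be smooth, irreducible, birational to $\tilde F$, and a $\PP^1$-bundle over $\tilde V$, the identification $\tilde R\cong\Bl_{\tilde V}\tilde F$ follows from a standard criterion (a smooth variety birational to a smooth variety with $\PP^1$ exceptional fibres over a smooth centre is the blowup). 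This universal-family manoeuvre is the substantive idea you are missing; your proposed local analysis at $\tilde V$ would work in principle but is exactly what the paper's approach is designed to circumvent.
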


In the appendix, we reprove the first two of the above class computations using the genus four fibration $I\to X$ and
some intersection theory on the moduli space of admissible covers.

\textbf{Acknowledgements} We would like to thank Olof Bergvall and Daniel Huybrechts for helpful correspondence. The
first author was supported by the ERC Consolidator Grant 681838 ``K3CRYSTAL''.

\section{Background and Notation}\label{sec:background} 

As the notation surrounding cubic fourfolds is substantial, we devote this section to fixing that used in the
paper and recalling some basic properties, so that it acts as a reference for later sections.

For a vector bundle $E$ we denote by $\PP(E)=\mathrm{Proj}(\mathrm{Sym}(E^\vee))$, so that projective space parametrises
one dimensional subspaces. We denote by $\G(k,n)$ the space of $k$-dimensional subspaces of $\CC^n$, with universal
bundle $\cU$ of rank $k$ and universal quotient bundle $\cQ$ of rank $n-k$. When multiple such bundles are in question,
we will denote then by $\cU_k$. Finally, we denote by $\sigma_I$ the standard Schubert cycles for an index $I$ so that,
e.g., $\sigma_i={\mathrm c}_i(\cQ)$ for $i\geq1$.

Throughout, $X\subset\PP^5$ will be a smooth cubic fourfold with $H_X=\OO_{X}(1)$ and $F\subset \G(2,6)$ the Fano
scheme of lines contained in $X$ which is a hyperk\"ahler fourfold \cite{bd}. To unburden notation, we will often be
sloppy in distinguishing a line $\ell\subset X$ and the point $[\ell]\in F$ that it defines. We denote by $\cU_F,\cQ_F$
the restrictions of $\cU,\cQ$ to $F$. The universal family of lines sits in a diagram
\[\xymatrix{
    \PP(\cU)\ar[r]^p\ar[d]^q & \PP^5\\
    \G(2,6)
}\]
and note that we have 
\[p^*\OO_{\PP^5}(1)\cong \OO_{\PP(\cU)}(1).\]
We use the same notation $p, q$ for the induced maps on
\[I:=\PP(\cU_F)\]
the universal family of lines on $F$.

The subvariety $F\subset \G(2,6)$ is given by a section of the rank four bundle $\Sym^3\cU^*\cong q_*p^*\OO_{\PP^5}(3)$
- in fact it is the section induced, under this isomorphism, by $f\in k[x_0,\ldots,x_5]_3$ whose vanishing is $X$ (see
 \cite[Proposition 6.4]{3264}) 
  so its cohomology class in the Grassmannian is given by ${\mathrm c}_4(\Sym^3\cU^*)$
 which can be computed as follows (see \cite[Example 14.7.13]{fulton})
\begin{align}\label{eq:class of F}
\begin{split}
   [F] &= 18{\mathrm c}_1(\cU^*)^2{\mathrm c}_2(\cU^*) + 9{\mathrm c}_2(\cU^*)^2 \\
        &= 18\sigma_1^2\sigma_{1,1}+9\sigma_{1,1}^2 \\
        &= 27\sigma_2^2-9\sigma_1\sigma_3-18\sigma_4.
\end{split}
 \end{align}

Consider now the morphism $p: I \to X$. We denote by 
\[C_x:=p^{-1}(x)\] 
the fibre over $x$, which parametrises the lines in $X$ containing $x$. If $X$ is general, then $C_x$ is 1-dimensional
for all $x$ (see \cite[Lemma 2.5]{lsv}), whereas for arbitrary $X$ there are only finitely many points in $X$ where the
fibre can be two dimensional (see \cite[Corollary 2.2]{coskunstarr}). We defer to Proposition \ref{prop:properties
C_x} for more properties of $C_x$. In any case, $C_x$ embeds in $F$ via $q$ and can also be realised as the $(2,3)$-complete
intersection in $\PP^3$ formed by the intersection points of the lines through $x$ with $T_xX \cap A$, where $A$ a
hyperplane not containing $x$: for $R_xX=\VV(\sum x_i\partial_i f)$ the polar quadric, $C_x$ is the intersection
$R_xX\cap T_xX\cap X\cap A$. As such, if it is 1-dimensional, it is of arithmetic genus 4 and has two $g^1_3$'s,
counted with multiplicities \cite[Corollary D.11]{3264}, namely the restrictions of the rulings of the quadric
$R_xX\cap T_xX\cap A$.

If $X$ is very general, $\Pic(F)=\ZZ$ and we denote by $\beta$ the generator of $\HH_2(F,\ZZ)^{\mathrm alg}$. In
\cite{amerik} it is proven that 
\[[C_x]=2\beta\in\HH_2(F,\ZZ).\]

Following \cite{cg}, there are two types of lines $\ell\in F$, depending on the decomposition of the normal bundle
$N_{\ell/X}$.

\begin{definition}
    We say that a line $\ell\subset X$ is 
    \begin{enumerate}
        \item of \textit{first type} if $N_{\ell/X}\cong\OO(1)\oplus\OO^2$,
        \item of \textit{second type} if $N_{\ell/X}\cong\OO(1)^2\oplus\OO(-1)$.
    \end{enumerate}
\end{definition}
An equivalent geometric description is as follows: $\ell$ is of
\begin{enumerate}
    \item first type if there is a unique $\Pi_\ell=\PP^2$ tangent to $X$ along $\ell$.
    \item second type if there is a unique $H_\ell=\PP^3$ tangent to $X$ along $\ell$ (equivalently, a family $\Pi_{\ell,t}=\PP^2$, $t\in\PP^1$, of $2$-planes tangent to $X$ along $\ell$).
\end{enumerate}
Denote by 
\begin{align*}
S&:=\{\ell : \ell\text{ is of second type}\}\subset F, \\
W&:=p(\PP(\cU_S))=p(q^{-1}(S))\subset X,
\end{align*}
the \textit{locus of second type lines} and the locus of points through which there passes a second type line
respectively. The universal family of second type lines $q^{-1}(S)=\PP(\cU_S)$ is the singular locus
$\operatorname{Sing}(p)$ of the morphism $p:I\to X$ from Lemma \ref{lem:sings Cx}. 

\begin{lemma}(\cite[Corollary 2.2.14]{huybrechts})\label{lem:wirreddiv}
    Let $X\subset\PP^5$ be a smooth cubic fourfold. Then $W$ is 3-dimensional, and if $X$ is general it is irreducible.
\end{lemma}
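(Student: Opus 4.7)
The statement has two parts --- the dimension and the irreducibility --- which I would handle separately.

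For the dimension, the starting point is that $S\subset F$ is a surface (classically known; cf.\ \cite{cg}). Hence the universal family of second type lines $q^{-1}(S)=\PP(\cU_S)\to S$ is a $\PP^1$-bundle of dimension $3$, and its image $W=p(q^{-1}(S))\subset X$ automatically satisfies $\dim W\leq 3$. The real content is the reverse inequality, i.e., that $p|_{q^{-1}(S)}:q^{-1}(S)\to X$ is generically finite onto $W$. Equivalently, for a general $x\in W$, the intersection $C_x\cap S\subset F$ must be zero-dimensional. My plan is to exhibit a single line $\ell\in S$ and a point $x\in\ell$ for which $C_x\cap S$ is finite; then semicontinuity of fibre dimension, applied to the proper map $p|_{q^{-1}(S)}$, propagates this to the generic point of $W$, giving $\dim W=3$. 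One candidate $x$ can be produced on a very general $X$ by a direct tangency argument at a chosen $\ell$, or alternatively by specialising to a cubic with explicit second type lines (e.g.\ a Pfaffian or cyclic cubic) and invoking semicontinuity along the family of smooth cubic fourfolds.

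For the irreducibility, when $X$ is general, the key input is that the surface $S$ is itself irreducible for general $X$ (a result of Amerik and co., following the foundational analyses of the Fano scheme). Granted this, $q^{-1}(S)=\PP(\cU_S)$ is a projective bundle over an irreducible base and hence irreducible, and $W$ is the image of an irreducible variety under the continuous map $p$, so it is irreducible.

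The main obstacle is the generic finiteness of $p|_{q^{-1}(S)}$: a priori one cannot exclude the possibility that an irreducible component of $C_x$ is entirely contained in $S$ over a $2$-dimensional family of points $x\in X$, which would force $\dim W\leq 2$. To rule this out cleanly, either one performs an intersection-theoretic calculation on $F$ --- using the classical expression for $[S]$ as a multiple of ${\mathrm c}_2(\cU_F)$ together with $[C_x]=2\beta$, so that the scheme-theoretic intersection $C_x\cap S$ has positive finite length for general $x\in W$ --- or one argues infinitesimally via the normal bundle characterisation of second type lines, exploiting that the existence of the distinguished $H_\ell=\PP^3$ tangent to $X$ along $\ell$ imposes a condition that cannot be satisfied by a full $1$-parameter family of lines through a generic point of $X$. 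Either route should give the zero-dimensionality needed to upgrade the upper bound $\dim W\leq 3$ to equality.
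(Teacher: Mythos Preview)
The paper does not supply its own proof of this lemma; it is stated with a citation to \cite[Corollary 2.2.14]{huybrechts}. So there is nothing to compare against directly, and the question is simply whether your sketch stands on its own.

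Your irreducibility argument is fine: for general $X$ the surface $S$ is irreducible (Theorem~\ref{thm:amerik and osy}), hence so is the $\PP^1$-bundle $q^{-1}(S)$, hence so is its image $W$.

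The dimension argument, however, has a gap. The lemma asserts $\dim W=3$ for \emph{every} smooth cubic fourfold, but both routes you propose for producing a point $x$ with $C_x\cap S$ finite --- a direct argument on a very general $X$, or specialisation plus semicontinuity over the moduli of cubics --- only yield the conclusion for general $X$. Semicontinuity of fibre dimension goes the wrong way to pass from the generic cubic back to an arbitrary smooth one.

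Your proposed intersection-theoretic fix does not work either. First, $[S]=5(H_F^2-\mathrm{c}_2(\cU_F))=5\sigma_2|_F$ is not a multiple of $\mathrm{c}_2(\cU_F)$. More seriously, $[S]\in\CH^2(F)$ and $[C_x]\in\CH^3(F)$, so their product lives in $\CH^5(F)=0$: the expected dimension of $S\cap C_x$ in the fourfold $F$ is $-1$, and the intersection number is identically zero, giving no information whatsoever about whether the actual intersection is zero-dimensional. The infinitesimal/normal-bundle alternative you allude to may well be made to work for arbitrary smooth $X$, but as stated it is only a gesture; you would need to actually carry it out to close the gap.
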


Denote by $H_F={\mathrm c}_1(\cU^\vee_F)$ the Pl\"ucker ample line bundle on $F$ and by $H_S$ the restriction on $S$.
The following is a combination of \cite[Lemma 1]{amerik}, \cite[\S 3]{osy}, \cite[Proposition 6.4.9]{huybrechts}.
\begin{theorem} \label{thm:amerik and osy}
    If $X\subset\PP^5$ is a cubic fourfold then $S$ is 2-dimensional and is the degeneracy locus of the Gauss map, i.e.,
    the following morphism of vector bundles
    \[ \Sym^2\cU_F \to \cQ_F^\vee.\]
    In particular ${\mathrm c}_1(K_S)=3H_S$ in $\HH^2(S,\bQ)$ and the class of $S$ in $\mathrm{CH}^2(F)$ is given by
    \[[S]=5({\mathrm c}_1(\cU_F^\vee)^2-{\mathrm c}_2(\cU_F^\vee))=5{\mathrm c}_2(\cQ_F)=5\sigma_2|_F.\]
    If $X$ is general, $S$ is a smooth projective irreducible surface.
\end{theorem}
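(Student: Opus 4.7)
The plan is to realise $S$ as the degeneracy locus of an explicit bundle map encoding the Gauss map along each line, then apply Thom--Porteous and adjunction.

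\textbf{Step 1 (the bundle map).} Let $f\in\Sym^3 V^*$ define $X\subset\PP(V)=\PP^5$. For $[\ell]=[U]\in F$, fully polarising $f$ and using $f|_U=0$ produces a linear map
\[\mu_U:\Sym^2 U\longrightarrow (V/U)^*,\qquad u_1u_2\longmapsto f(u_1,u_2,-),\]
which globalises on $F$ to a morphism $\mu:\Sym^2\cU_F\to\cQ_F^\vee$. Since the tangent hyperplane to $X$ at $[u]\in\ell$ is the annihilator of $f(u,u,-)\in V^*$, the common intersection $\bigcap_{x\in\ell}T_xX\subset V$ equals the annihilator of $\mathrm{image}(\mu_U)$, and has dimension $6-\mathrm{rk}(\mu_U)$. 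Comparing with the geometric characterisations of the two types (first type: this intersection is a $\PP^2$, second type: a $\PP^3$) identifies $S=\{[\ell]\in F:\mathrm{rk}(\mu_U)\leq 2\}$.

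\textbf{Step 2 (class and dimension).} As the corank-one locus of a map between bundles of ranks $3$ and $4$, the expected codimension is $(3-2)(4-2)=2$. For $X$ general, a coordinate calculation at a chosen second type line shows that $\mu$ is transverse to the rank stratification, giving $\dim S=2$, and Thom--Porteous \cite[Theorem 14.4]{fulton} yields
\[[S]=c_2(\cQ_F^\vee-\Sym^2\cU_F).\]
Expanding with the Chern roots of $\cU_F$ and the identities $c_1(\Sym^2\cU_F)=3c_1(\cU_F)$, $c_2(\Sym^2\cU_F)=2c_1(\cU_F)^2+4c_2(\cU_F)$, together with $c(\cU_F)c(\cQ_F)=1$, is a short calculation giving $[S]=5c_2(\cQ_F)=5\sigma_2|_F=5(c_1(\cU_F^\vee)^2-c_2(\cU_F^\vee))$.

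\textbf{Step 3 (canonical class).} Since $F$ is hyperk\"ahler, $K_F=\OO_F$, so adjunction reduces to $c_1(K_S)=c_1(\det N_{S/F})$. The standard description of the normal bundle to a corank-one degeneracy locus gives $N_{S/F}\cong\bHom(K,C)|_S$ with $K=\ker\mu|_S$ and $C=\mathrm{coker}\,\mu|_S$ of ranks $1$ and $2$. Hence
\[c_1(K_S)=-2c_1(K)+c_1(C),\]
and combining with $c_1(C)-c_1(K)=c_1(\cQ_F^\vee)-c_1(\Sym^2\cU_F)=2H_F|_S$ reduces the problem to identifying $c_1(K)$. A geometric analysis at a second type line, interpreting the kernel line in $\Sym^2 U$ via the pencil of tangent planes $\Pi_{\ell,t}$ (equivalently, via the divisor of two points on $\ell$ cut out by the fixed $\PP^3$-envelope), yields $c_1(K)=-H_S$, hence $c_1(K_S)=3H_S$.

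\textbf{Step 4 (smoothness and irreducibility).} Smoothness of $S$ for general $X$ follows from the transversality of Step 2, verified in coordinates at a reference second type line. For irreducibility, since $F$ is simply connected and the relevant bundle difference has the positivity required for degeneracy loci on a Fano-type base, the Fulton--Lazarsfeld connectedness theorem \cite[Theorem 12.12]{fulton} forces $S$ connected; combined with smoothness this yields irreducibility.

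The main technical obstacle will be Step 3, specifically the identification $c_1(K)=-H_S$, which requires a genuine geometric interpretation of the kernel line and not merely Chern class formalism. The remaining steps amount to routine Chern class computations and a standard application of degeneracy-locus connectedness, once transversality at one second type line has been verified.
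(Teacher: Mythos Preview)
The paper does not itself prove this theorem: it is stated with attribution to \cite{amerik}, \cite{osy}, and \cite{huybrechts}. Your Steps~1--2 reproduce Amerik's argument and are correct, and you are right to flag Step~3 as the substantive point. One efficient route to $c_1(K)=-H_S$ is to identify the \emph{image} of $\mu|_S$ rather than the kernel: it is $(V/\cH)^*\subset\cQ_F^\vee|_S$ for $\cH$ the rank-four bundle of tangent $3$-planes $H_\ell$ along second-type lines, so the cokernel is $C\cong(\cH/\cU_F)^*$ and everything reduces to $c_1(\cH)$, which is accessible since $\PP(\cH/\cU_F)\cong E_S$ (cf.\ the discussion around Lemma~\ref{lem:huybrechtsincidence}).

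Step~4, however, has a genuine gap. The Fulton--Lazarsfeld connectedness theorem requires $\cQ_F^\vee\otimes(\Sym^2\cU_F)^\vee$ to be ample on $F$, which you have not checked; your phrase ``Fano-type base'' is misleading since $F$ is hyperk\"ahler with $K_F=0$, and on the ambient Grassmannian $\cQ^\vee$ restricts to $\OO(-1)\oplus\OO^3$ on any Grassmannian line, so the needed ampleness is far from automatic and may well fail. The argument in \cite{huybrechts} (alluded to in the proof of Proposition~\ref{prop:properties C_x} of this paper) proceeds instead via the universal family: the projection $\cS\to\G(2,6)$ from the universal second-type locus has irreducible fibres---the condition on a fixed line is a single irreducible determinantal hypersurface in $|\OO_{\PP^5}(3)|$---so $\cS$ is irreducible, and then the general fibre $S$ over $|\OO_{\PP^5}(3)|$ is irreducible. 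This universal-family template is exactly what the paper reuses in Section~\ref{sec:surface V} for $\tilde V$ and $\tilde C$, so you should replace your Step~4 with it. A smaller omission: the theorem asserts $\dim S=2$ for \emph{every} smooth cubic, whereas your transversality check at a single line only handles the general one.
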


Let 
\begin{align}
\label{diag:voisinmap}
\begin{split}
\phi:F&\dashrightarrow F\\
\ell&\mapsto\ell'
\end{split}
\end{align}
be \textit{the Voisin map} of \cite{voisinintrinsic}, taking a general line $\ell$ and giving the residual line $\ell'$ in
the tangent 2-plane $\Pi_\ell$ to $\ell$, i.e., $\Pi_\ell\cap X=2\ell+\ell'$. Note that this is not defined on $S$ nor
on any lines contained in a plane contained inside $X$. Containing a plane is a divisorial condition in the moduli
space, so for $X$ outside this locus, we can resolve this map with one blowup $\operatorname{Bl}_SF$ along the surface
$S$.  We denote by $E_S$ the exceptional divisor which is a $\PP^1$ fibration over $S$
\begin{equation}\label{diag:voisindiagram}
\begin{split}
\xymatrix{
    & E_S\ar@{^{(}->}[r]\ar[dl] & \operatorname{Bl}_SF\ar[dl]_\pi\ar[dr]^{\tilde{\phi}} & \\
    S\ar@{^{(}->}[r] & F\ar@{-->}[rr]^{\phi}& & F.
}
\end{split}
\end{equation}

As it will be relevant for the count of nodal rational curves later, we record the following.

\begin{theorem}(\cite[Theorem 0.2]{osy})\label{thm:osy}
    If $X$ is a very general cubic fourfold, then for every rational curve $C\subset F$ of class $\beta$ there exists a
    unique $s\in S$ so that $C=\phi(q^{-1}(s))$.
\end{theorem}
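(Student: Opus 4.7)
The plan is to exhibit a bijection between $S$ and the set of rational curves in $F$ of primitive class $\beta$, via the assignment $s \mapsto \tilde\phi(E_S|_s)$ from diagram \eqref{diag:voisindiagram}. The notation $\phi(q^{-1}(s))$ in the statement should be read as the image under the resolved map $\tilde\phi$ of the fibre of the exceptional divisor over $s\in S$; equivalently, as $s\in S$ is a line of second type, it carries a $\PP^1$-family of tangent planes $\Pi_{s,t}$, each giving a residual line $\ell'_t$ with $\Pi_{s,t}\cap X = 2\ell_s + \ell'_t$, and $C_s := \{[\ell'_t] : t\in\PP^1\}$ is a rational curve in $F$. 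So the first step is to observe that the construction $s\mapsto C_s$ is well-defined and gives a $2$-parameter family of rational curves in $F$.

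The second step is a class computation. Since $\Pic F = \ZZ\cdot H_F$ when $X$ is very general, it suffices to compute $C_s\cdot H_F$. Pulling back via $\tilde\phi$, one uses $\tilde\phi^*H_F = \pi^*H_F + a E_S$ for some $a$ determined by comparing degrees along a generic line (using the known degree 16 of $\phi$ and the residual triple intersection formula), and then intersects with a generic fibre class of $E_S\to S$. The output should match the intersection $\beta\cdot H_F$, which is half of $C_x\cdot H_F$ as computed by Amerik. Injectivity of $s\mapsto C_s$ is then transparent: the common line $\ell_s$ of all planes $\Pi_{s,t}$ is recovered from $C_s$ as the base locus of the residual construction, or equivalently as the unique line $\ell$ with the property that $\phi(\ell'_t)$ returns to $s$ along the $\PP^1$-family.

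The main difficulty is surjectivity: every rational curve $C\subset F$ of class $\beta$ must arise as $C_s$ for some $s\in S$. I would approach this by analysing the ruled surface $\Sigma := p(q^{-1}(C))\subset X$, whose degree in $\PP^5$ is computed from $\beta\cdot H_F$ together with the fact that the lines parameterised by $C$ sweep out $\Sigma$. The key claim to isolate is that such $\Sigma$ must contain a distinguished directrix $\ell_s$ along which $\Sigma$ is everywhere tangent to $X$, equivalently along which $X$ has second-order contact with the ruling; such $\ell_s$ is then forced to be a line of second type by Theorem \ref{thm:amerik and osy}, and $C$ is recovered from the $\PP^1$-family of tangent planes to $X$ along $\ell_s$ in $\Sigma$. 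This directrix is pinned down by looking at the failure of the normal sequence of $\Sigma\subset X$ along the ruling.

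The hardest point will be ruling out other components of the moduli of rational curves of class $\beta$, particularly to ensure that the constructed map $S \to \{\text{rational curves of class }\beta\}$ is surjective onto the full moduli and not merely onto one component. For this I would combine the very generality of $X$ (to force $\Pic F = \ZZ$ and to avoid $X$ containing any plane) with a dimension count: the family constructed from $S$ is $2$-dimensional, and a deformation-theoretic argument on the hyperkähler $F$ shows that any component of the moduli of rational curves of primitive class is also $2$-dimensional with no room for further components beyond the image of $S$. Combined with the injectivity above and the irreducibility of $S$ from Theorem \ref{thm:amerik and osy}, this forces the bijection.
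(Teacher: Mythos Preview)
The paper does not prove this statement: it is quoted verbatim as \cite[Theorem 0.2]{osy} and used as a black box (see its invocation in the proof of Theorem \ref{thm:nodalcount}). So there is no in-paper argument to compare against; the actual proof lives in \cite{osy}, where the method is enumerative, matching the moduli of stable genus-zero maps of class $\beta$ against the surface $S$ via a Gromov--Witten computation rather than a direct geometric analysis of each curve.

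Your outline is a different, more classical-geometric route. The first two steps are fine and in fact already in the literature: that $\tilde\phi(E_{S,s})$ has class $\beta$ is \cite[Proposition 6]{amerik}, as the paper notes. Injectivity is also essentially as you say. The surjectivity argument, however, has a real gap. Your plan is to take an arbitrary rational $C$ of class $\beta$, look at the degree-$3$ scroll $\Sigma\subset X$ it sweeps, and find a directrix line of second type. This is a reasonable strategy, but two points are not addressed: first, you need to argue that $\Sigma$ is actually a rational normal cubic scroll (and not, say, a cone or a nonreduced/degenerate configuration), which requires using that $C$ is irreducible rational of class exactly $\beta$ together with the very general hypothesis; second, and more seriously, your final ``dimension count plus irreducibility of $S$'' does not rule out other $2$-dimensional components of the moduli of rational curves of class $\beta$. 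Two irreducible $2$-dimensional families can coexist without contradiction, so matching dimensions alone is not enough---this is precisely where \cite{osy} invokes an enumerative/GW comparison to pin down the count. If you want to make your geometric approach work, you would need an independent argument that the moduli of rational curves of class $\beta$ is irreducible, or else a direct proof that every such $\Sigma$ carries a second-type directrix, which is the substantive content.
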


One can also define the following rational map 
\begin{align}\label{diag:defpsi}
\begin{split}
\psi: F &\dashrightarrow X\\ 
[\ell] & \mapsto \ell\cap \ell'
\end{split}
\end{align}
taking a general line to the intersection point with its residual in the unique tangent 2-plane. This is not defined on $S$ but also on
the following locus. 
\begin{definition}\label{triplelinelocus}
    Denote by $V\subset F$ the locus of points $[\ell]$ so that there is a 2-plane $\Pi_\ell$ so that $\Pi_\ell\cap
    X=3\ell$. 
\end{definition}
This locus appears briefly in the literature (\cite[Definition 20.4]{svfourier} and also \cite[Lemma 10.15]{cg} where in
the case of a cubic threefold it is proven to be a finite set) and can also be viewed as the set of lines fixed by the
Voisin map $\phi$, that is those $[\ell]\in F$ with $\phi([\ell])=[\ell]$.

We will need the follow results from intersection theory, which we put together for later use. 
In the following we denote by $l:=\OO_{I/F}(1)$ the tautological bundle, $\omega_p:=\det(\Omega^1_p)$ the determinant
of the relative cotangent bundle, and by $\lambda:=\det(p_*\omega_p)$ the determinant of the Hodge bundle of the 
fibration $p$.
\begin{lemma}\label{lem:intersection theory}
Let $X\subset\PP^5$ be a smooth cubic fourfold. We have the following equalities in the Chow ring (or in cohomology)
    \begin{enumerate}
    \item $H_F^4=108$, $H_F^2\, {\mathrm c}_2(\cU_F)=45$, ${\mathrm c}_2^2(\cU_F)=27$,
    \item $H_F^2\, \sigma_2=63$, $\sigma_2^2|_F= 45$,
    \item $\beta H_F=3$, $\beta = {1\over 36}H_F^3$ and so $q_*[C_x]= {1\over 18}H_F^3$,
    \item $q_*p^*H_X^2= H_F$, $q_*p^*H_X^3=\sigma_2|_F$, $q_*p^*H_X^4=q_*p^*3[x]= {1\over 6} H_F^3$,
    \item ${\mathrm c}_1(\omega_p)=q^*H_F+l$,
    \item $q^*\mathrm{c}_2(\cU_F^{\vee})=l^2-lq^*H_F$,
    \item $p_*q^*H_F^2=21H_X$, $p_*(l^2)=0$, $p_*(lq^*H_F)=6H_X$,
    \item If $X$ is general, $[W]=75H_X$,
    \item $p_*({\mathrm c}_1(\omega_p)q^*[S])= 180H_X^2$,
    \item $\lambda =9H_X $,
    \item $\mathrm{c}_1(N_{S/F})=3H_S\in \HH^2(S,\QQ)$, $\mathrm{c}_2(N_{S/F})=1125$.
    \end{enumerate}
\end{lemma}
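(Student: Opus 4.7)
The plan is to establish the eleven identities in three groups of increasing dependence: first the Schubert-calculus computations on $\G(2,6)$ yielding (1), (2), and (11); next the bundle-theoretic identities on $I=\PP(\cU_F)$ yielding (3)--(6); and finally the push-forwards along $p$ yielding (7)--(10).

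For the first group, the key input is the class $[F] = 18\sigma_1^2\sigma_{1,1} + 9\sigma_{1,1}^2$ from \eqref{eq:class of F}. Each entry of (1) and (2) reduces to a Schubert product on $\G(2,6)$ followed by Pieri's rule; for example $c_2^2(\cU_F) = \sigma_{1,1}^2 \cdot [F]$ and $H_F^2\sigma_2 = \sigma_1^2\sigma_2\cdot[F]$. The first half of (11), $c_1(N_{S/F}) = 3H_S$, follows from the adjunction sequence $0 \to T_S \to T_F|_S \to N_{S/F} \to 0$ together with $c_1(T_F) = 0$ (hyperk\"ahler) and $K_S = 3H_S$ from Theorem \ref{thm:amerik and osy}; the second half is the self-intersection $c_2(N_{S/F}) = [S]^2 = 25\,\sigma_2^2|_F = 1125$, again from Theorem \ref{thm:amerik and osy} and (2).

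For the second group, (6) is the tautological quadratic relation in the Chow ring of the $\PP^1$-bundle $q\colon I \to F$, and (5) comes from $c_1(\omega_{I/F}) = -2l + q^*H_F$ combined with $K_F = 0$ and $K_X = -3H_X$. The first two equalities of (4) follow from the Segre-class identity $q_*(l^{k+1}) = s_k(\cU_F)$; the last equality is cleanest via $p^*H_X^4 = 3[C_x]$, combined with $[C_x] = 2\beta$ from \cite{amerik} and item (3). Item (3) itself rests on the geometric realization of $C_x$ as a $(2,3)$-complete intersection in $\PP^3$, which gives $H_F\cdot [C_x]=6$ (hence $\beta\cdot H_F=3$), and then $H_F^4=108$ forces $\beta = \frac{1}{36}H_F^3$ as a rational class.

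For the third group, the projection formula together with $l = p^*H_X$ reduces everything to previously computed intersection numbers. Explicitly, $p_*(l\, q^*H_F) = H_X \cdot p_*(q^*H_F)$, where $p_*(q^*H_F) = H_F \cdot q_*[C_x] = 6$; $p_*(l^2) = 0$ because $l$ restricts trivially to fibres of $p$; and $p_*(q^*H_F^2) = 21H_X$ is pinned down by pairing against $H_X^3$ using (1) and (6). Item (8) is the same recipe applied to $[S] = 5(H_F^2 - c_2(\cU_F))$ from Theorem \ref{thm:amerik and osy}, giving $p_*(q^*[S]) = 75H_X$, which equals $[W]$ once the map $p|_{q^{-1}(S)} \to W$ is known to be birational. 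Item (9) combines (5), (7), and (8). Finally (10) follows from Mumford's formula $12\lambda = \kappa + \delta$ applied to the genus-four fibration $p$: $\kappa = p_*c_1(\omega_p)^2 = 33H_X$ from (5) and (7), and $\delta = [W] = 75H_X$ from (8), yielding $\lambda = 9H_X$.

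The main obstacle lies in item (10), together with the birationality assertion used in (8). Mumford's formula requires not only the irreducibility of $W$ (Lemma \ref{lem:wirreddiv}) but also that the generic fibre of $p$ over $W$ is an irreducible 1-nodal curve, so that $\delta$ equals $[W]$ with no multiplicity correction; similarly, the identification $p_*(q^*[S]) = [W]$ demands that a generic point of $W$ lies on a unique second type line. Both facts are geometric inputs coming from the detailed analysis of the fibres $C_x$ carried out in Section \ref{sec:curve of lines}, and carry the real content of the computation.
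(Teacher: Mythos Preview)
Your approach is essentially the same as the paper's, with one pleasant simplification and one genuine gap.

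The simplification is in (11): your argument $c_2(N_{S/F})=[S]^2=25\,\sigma_2^2|_F=1125$, via the self-intersection formula for a smooth codimension-two cycle, is cleaner than the paper's appeal to the Harris--Tu formula for degeneracy loci.

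The gap is in (9). Your sentence ``Item (9) combines (5), (7), and (8)'' does not go through. Using (5) to expand $c_1(\omega_p)q^*[S]=(q^*H_F+l)\,q^*[S]$, the term $p_*(l\,q^*[S])=H_X\cdot[W]=75H_X^2$ is indeed handled by (8) and the projection formula. But the other term is $p_*q^*(H_F\cdot[S])\in\CH^2(X)$, and nothing in (7) tells you this is a multiple of $H_X^2$: item (7) only computes $p_*$ of codimension-two classes on $I$, landing in $\Pic(X)=\ZZ H_X$ by Lefschetz, whereas here you land in $\CH^2(X)$, and $\HH^{2,2}(X,\ZZ)$ has rank $23$. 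The paper deals with this by first assuming $X$ very general, so that $\HH^{2,2}(X,\ZZ)_{\mathrm{pr}}=0$ and the class \emph{is} a multiple of $H_X^2$; then determining the coefficient by pairing against $H_X^2$ (getting $105$); and finally extending to every smooth $X$ by a specialisation argument using that all classes involved are defined on the universal family over $|\OO_{\PP^5}(3)|_{\mathrm{sm}}$. This step is not a routine detail and should be flagged alongside the geometric inputs you already identified for (8) and (10).
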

\begin{proof}
In the following we will repeatedly use the fact that if $x$ a cycle in $X$ and $y$ a cycle in $F$ with ${\mathrm codim}(x)
+ {\mathrm codim}(y)=5$, then $q_*p^*(x) \, y = x\, p_*q^*y \in {\mathbb Z}$, from the projection formula.

(1) and (3) are in \cite[Lemma 4]{amerik} (see also \cite{osy}), whereas (2), (4) are standard intersection theory of
Schubert classes combined with formula of the class of F given in \eqref{eq:class of F}.

(5) follows as we have $\omega_p=\omega_I-p^*\omega_X$, so by combining the facts that $\omega_X=-3H_X$, $p^*H_X=l$ and
$\omega_I=\omega_q=q^*H_F-2l$ we get the result. (6) is the Grothendieck relation.

For (7), the first follows as if we let $p_*q^*H_F^2=aH_X$, then $p_*q^*H_F^2 H_X^3=3a$ so, by Theorem \ref{thm:amerik and osy} and (4), we have 
\begin{align*}
p_*q^*H_F^2 H_X^3 &= H_F^2 q_*p^*(H_X^3)\\&= H^2_F \sigma_2\\& = 63.
\end{align*}
For the second, $l^2=p^*H_X^2$ implies $p_*l^2=0$. For the third, 
\begin{align*}
p_*(lq^*H_F)&= p_*(p^*H_Xq^*H_F)= H_X p_*q^*H_F\\&=6H_X,
\end{align*}
where for the final equality, if $ p_*q^*H_F = m
[1_X]$ with $m=p_*q^*H_F [x]$ for $x\in X$, then $m=H_F q_*p^*[x]= H_F [C_x]=6$.

For (8) let $[W]=aH_X$. Then $[W]H_X^3=aH_X^4=3a$. We will show
later in Proposition \ref{cor:deg qs} that for $X$ general, $p:q^{-1}(S)\to W$ is birational. Assuming this for the
moment, we have $[W]=p_*q^*[S]$. Furthermore, we have for any smooth cubic that
\begin{align*}
p_*q^*[S] H_X^3&= 
[S]q_*p^*H_X^3\\&= 5\sigma_2^2|_F \\&= 225,
\end{align*}
so that $p_*q^*[S]=75H_X$. This gives (8) if $X$ is general. 

For (9), recall from (5) that $c_1(\omega_p)=p^*H_X+q^*H_F$ and as we saw in (8) above we have the relation
$p_*q^*[S]=75H_X^2$ for all $X$. It follows that
\begin{align*}
p_*(p^*H_Xq^*[S])&= H_X p_*q^*[S]\\&= 75 H_X^2.
\end{align*}
Assuming that $X$ is very general, we have that $\HH^{2,2}(X,\ZZ)_{\operatorname{pr}}=0$ from Deligne's Invariant Cycle
Theorem (see \cite[Corollary 1.2.12]{huybrechts}). Hence $p_*(q^*H_Fq^*[S])=mH_X^2$ for some integer $m$, giving
\begin{align*}
3m=p_*(q^*H_F\, q^*[S]) H_X^2 &= p_*q^*(H_F[S])H_X^2 \\&= 
H_F [S] q_*p^*H_X^2\\&= 5H_F^2\sigma_2 = 5 \cdot 63.
\end{align*}
Therefore $p_*(q^*H_Fq^*[S])=105H_X^2$ giving the result if $X$ is very general.

To extend (9) to any cubic, note that all classes in question are defined globally on the universal family of
smooth cubic fourfolds $f:\cX\to |\OO_{\PP^5}(3)|_{\text{sm}}$ (see Section \ref{sec:surface V}). Hence, if we consider
the class $\alpha=p_*({\mathrm c}_1(\omega_p)q^*[\cS])- 180H_\cX^2\in \HH^4(\cX,\ZZ)$, then as $R^4f_*\ZZ$ is a local system
and $\alpha_t=0$ for $t$ in the complement of a countable union of closed subsets, we obtain that $\alpha_t=0$ for all
$t\in|\OO_{\PP^5}(3)|_{\text{sm}}$. The analogous result for Chow groups is \cite[Lemma 3.2]{voisinbook}. 

For (10), from the standard formula for $\lambda$ (e.g., from \cite[3.110]{harrismorrison}) and (5)  we have 
\begin{align*}
\lambda &= {1\over 12}(p_*\omega_p^2+[W]) \\ &= {1\over 12}(p_*(l+q^*H_F)^2+75H_X)\\
&=9H_X.\qedhere
\end{align*}
For (11), note that $K_F=0$ and Theorem \ref{thm:amerik and osy} imply the first equality, whereas the second
also follows from the description of $S$ as a degeneracy locus and some computations involving the Harris--Tu formula, see
\cite[Proposition 4.1]{cubicfourfolds2}.
\end{proof}

\section{The Curve of Lines Through a Point}\label{sec:curve of lines} 

In this section we collect and extend some facts about the $(2,3)$-intersection $C_x$ parametrising lines through a
point $x\in X\subset\PP^5$ of a smooth cubic fourfold. As mentioned in the introduction, if $X$ is general then $C_x$ is
always a curve, whereas for arbitrary $X$ there can be only finitely many points where it is a surface. The following is
well known to experts but we include the argument so as to fix notation and extract a bit further out of its proof.

\begin{lemma}\label{lem:sings Cx}
Let $X\subset \PP^5$ be a smooth cubic fourfold and $x\in X$ such that $C_x=p^{-1}(x)$ is 1-dimensional.
\begin{enumerate}
    \item If $\ell$ is a line of first type through $x$, then $C_x$ is smooth at $[\ell]$.
    \item If $\ell$ is a line of second type through $x$, then $C_x$ is singular at the point $[\ell]$.
\end{enumerate}
\end{lemma}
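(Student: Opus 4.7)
The plan is to identify the Zariski tangent space of $C_x$ at $[\ell]$ as a subspace of $T_{[\ell]}F\cong H^0(\ell,N_{\ell/X})$ and compare its dimension with $\dim C_x=1$. The key observation is that $C_x$ embeds naturally into $F$ via $q$, since a line through $x$ is determined by its class in $F$ together with the incidence condition $x\in\ell$.

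The first step is the identification $T_{[\ell]}C_x\cong H^0(\ell,N_{\ell/X}(-x))$. A first-order deformation $s\in H^0(N_{\ell/X})$ deforms $\ell$ to a line $\ell_\epsilon\subset X[\epsilon]$, and the requirement that the deformation keep $x$ on the line is precisely that the shift of $\ell$ at $x$, namely $s(x)\in N_{\ell/X}|_x$, vanish. Hence $T_{[\ell]}C_x$ is cut out inside $H^0(\ell,N_{\ell/X})$ as the kernel of the evaluation map at $x$, which is exactly $H^0(\ell,N_{\ell/X}(-x))$.

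With this in hand the two cases are immediate. If $\ell$ is of first type, then $N_{\ell/X}(-x)\cong\OO\oplus\OO(-1)^{\oplus 2}$, whose space of global sections is one-dimensional; this matches $\dim C_x=1$ and yields smoothness at $[\ell]$. If $\ell$ is of second type, then $N_{\ell/X}(-x)\cong\OO^{\oplus 2}\oplus\OO(-2)$ has a two-dimensional space of sections, so the Zariski tangent space strictly exceeds $\dim C_x$ and $[\ell]$ must be a singular point.

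The only step that really requires care is the identification of $T_{[\ell]}C_x$, where one must keep track of the fact that $C_x\subset I$ maps via $q$ isomorphically onto the subscheme of $F$ defined by the incidence condition $x\in\ell$; after that the argument is purely numerical. An alternative, more hands-on route suggested by the $(2,3)$-complete intersection description already in the excerpt is to choose coordinates with $x=[1:0:\ldots:0]$ and $f=x_0^2L_1+x_0L_2+L_3$, realise $C_x$ as $V(L_2,L_3)\subset\PP^3=V(L_1)\cap A$, and observe that smoothness at $[\ell]=[a]$ is equivalent to linear independence of $dL_1,dL_2(a),dL_3(a)$, which is in turn precisely the condition that there exists a unique $2$-plane tangent to $X$ along $\ell$, i.e.\ that $\ell$ be of first type.
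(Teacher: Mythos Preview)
Your proof is correct but takes a genuinely different route from the paper's. You argue deformation-theoretically: identifying $T_{[\ell]}C_x$ with the kernel of the evaluation map $H^0(N_{\ell/X})\to N_{\ell/X}|_x$, i.e.\ with $H^0(N_{\ell/X}(-x))$, and then reading off $h^0=1$ or $2$ directly from the two possible splitting types. The paper instead works in coordinates, using the Clemens--Griffiths normal forms \eqref{eqtype1} and \eqref{eqtype2} for the equation of $X$ along $\ell$. It writes down the two surfaces $T_2=0$, $T_3=0$ whose intersection in $\PP^3$ is $C_x$, and checks transversality (or failure thereof) by computing the gradients at the point corresponding to $[\ell]$.

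Your approach is shorter and more conceptual, and avoids any coordinate choices. The paper's explicit computation, however, buys something extra that is used immediately afterwards: it identifies the tangent line (resp.\ the tangent cone) of $C_x$ at $[\ell]$, and shows that the span of $\ell$ with this tangent line/cone is exactly $\Pi_\ell$ (resp.\ $H_\ell$). This geometric description feeds into Remark~\ref{rem:tangent hyperplane} and the later analysis of singularities. Your normal-bundle argument does not directly yield this, though one could recover it by observing that the $2$-dimensional space $H^0(N_{\ell/X}(-x))$ in the second-type case is $H^0(\OO^{\oplus2})$ coming from the two $\OO(1)$ summands, which span precisely $H_\ell/\ell$. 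The alternative coordinate approach you sketch at the end is close in spirit to the paper's, though the paper chooses coordinates adapted to $\ell$ rather than to $x$.
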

\begin{proof}
Let $\ell$ be a line of first type in $X$. We may assume that it is given by the equations
$\VV(x_2,x_3,x_4,x_5)$. Then the equation of $X$ may take the form \cite[6.9]{cg}.

\begin{align}\label{eqtype1}
\begin{split}
F=&\ x_4x_0^2+x_5x_0x_1+x_3x_1^2+ x_0Q_0(x_2,x_3,x_4,x_5)+ \\& x_1Q_1(x_2,x_3,x_4,x_5)+
 P(x_2,x_3,x_4,x_5)=0.
\end{split}
\end{align}

Since $x\in \ell$ we may assume that $x=[1,a,0,0,0,0] \in \ell$ (the case $x=[0,1,0,0,0,0]$ is treated similarly). To
find $C_x$, we write the equation as
\begin{align*}
&x_4x_0^2+x_5x_0(x_1-ax_0) + ax_5x_0^2+x_3(x_1-ax_0)^2
+2ax_0x_3(x_1-ax_0)\\&+a^2x_0^2x_3
+ x_0Q_0 + (x_1-ax_0)Q_1+ax_0Q_1+ P=0.
\end{align*}
Putting $x_0=1$ and $x'_1=x_1-a$ we get
\[
 [a^2x_3+x_4+ax_5]+[x_5x'_1+2ax_3x'_1+Q_0+aQ_1]+[x_3{x'_1}^2+x'_1Q_1+P]=0.
\]
Note in this case that $\Pi_{\ell}= \VV(x_3,x_4,x_5)$ is the tangent 2-plane to X along $\ell$. 
The lines through $x=(0,0,0,0,0)\in {\mathbb A}^5$ (in the $x_1',x_2,x_3,x_4,x_5$ coordinates) are determined by
their slopes $[x'_1:x_2:x_3:x_4:x_5]\in \PP^4$, with the slope of the line $\ell $ being $[1:0:0:0:0]$. They are
parametrised by the following curve which is given by the system
\begin{align*}
&a^2x_3+x_4+ax_5=0,\\
&T_2= x_5x'_1+2ax_3x'_1+Q_0(x_2,x_3,x_4,x_5)+aQ_1(x_2,x_3,x_4,x_5)=0,\\
&T_3= x_3{x'_1}^2+x'_1Q_1(x_2,x_3,x_4,x_5)+P(x_2,x_3,x_4,x_5)=0.
\end{align*}
Substituting $x_4$ from the 1st equation, $T_2=0, T_3=0$ become equations in
the variables $x'_1, x_2,x_3,x_5$ in $\PP^3$. The point in $C_x$ which corresponds to the line $\ell$ is then $[1:0:0:0]$ (in
the $x'_1,x_2,x_3,x_5$-coordinates). At this point the gradients of the surfaces $T_2,T_3$ are $\langle0,0,2a,1\rangle$ and
$\langle0,0,1,0\rangle$ respectively. Hence the intersection is transversal and the curve $C_x$ is smooth at the
point $[\ell]$ with the tangent line given by $x_3=x_5=0$ in $\PP^3$. Note then that the plane spanned by the line $\ell $ and the above tangent line is the tangent plane $\Pi_{\ell}$  to $\ell $ in $X$. 

Let now $\ell$ be a line of second type, given again by $\VV(x_2,x_3,x_4,x_5)$. Then the equation of $X$ may take the form
\cite[6.10]{cg}
 \begin{align} \label{eqtype2}
 \begin{split}
F=&\ x_4x_0^2+x_5x_1^2+ x_0Q_0(x_2,x_3,x_4,x_5)+ x_1Q_1(x_2,x_3,x_4,x_5)
 \\&+P(x_2,x_3,x_4,x_5)=0.
 \end{split}
 \end{align}

A similar calculation shows that with centre the point $x=[1:a:0:0:0:0] \in \ell$ (the case $x=[0:1:0:0:0:0]$ is
treated similarly) the equation of $X$ becomes (by putting $x_0=1$ and $x_1-a=x'_1$)
\[
[x_4+a^2x_5] + [2ax_5x'_1+Q_0+aQ_1] +[x_5{x'_1}^2 +
x'_1Q_1 +P] =0.
\]
Note in this case that $H_{\ell}= \VV(x_4,x_5)$ is the tangent 3-space to $X$ along $ \ell$.  Then $C_x$ is the
intersection of two surfaces in a 3-dimensional projective space in variables $x_1', x_2,x_3,x_5$  with gradients at    
$\langle0,0,0,2a\rangle$ and $\langle0,0,0,1\rangle$ respectively at the point $[1:0:0:0]$ corresponding to $\ell$. Hence the intersection is not transversal at the point $[\ell]$, $C_x$
is singular there with the tangent cone given by the plane  $x_5=0$ in $\PP^3$. Note then that the space
spanned by the line $\ell $ and the above tangent cone is the tangent space $H_{\ell}$  to $\ell $ in $X$. 
\end{proof}

To summarise the geometry, for $x\in \ell\subset X$, the tangent space to the line $\ell\subset X$ is spanned by $\ell$
and the tangent cone to $C_x$ at the point $[\ell]$. This space has dimension 2 (i.e., $[\ell]$ is a line of first type)
if and only if $C_x$ is smooth at $[\ell]$ and has dimension 3 (i.e., $[\ell]$ is a line of second type) if and only if
$C_x$ has a singularity at $[\ell]$ (with tangent cone of dimension two). When we consider $C_x$ as a curve in $F$
(i.e., as $q(p^{-1}(x))$), the singular points correspond to the intersection points with the surface $S$. When
$x\notin W$ this intersection is empty and the curve $C_x$ is smooth. 

\begin{remark}\label{rem:tangent hyperplane}
In \cite[Definition 6.6]{cg}, lines of first or second type are defined as those for which the image of the dual mapping
is a smooth plane conic or a two-to-one covering of the projective line respectively. 

In fact, when the above line $\ell$ is of second type, $\nabla F([s:t:0:0:0:0])=\langle0,0,0,0,s^2,t^2\rangle$ and the
fibres of the two-to-one covering are formed by the points $[s:\pm t:0:0:0:0]$, with ramification points the two points
at infinity $x_0=[1:0:0:0:0:0], \ x_1=[0:1:0:0:0:0]$.  With $x=[1:a:0:0:0:0]\in \ell$, $a\neq 0$ and $Y_x= T_xX \cap X$,
one sees that $Y_x$ contains $\ell$ and has two singular points on $\ell$, namely $x$ and $x'=[1:-a:0:0:0:0]$, the
conjugate of $x$ under the dual mapping. For $a=0$, $Y_{x_0}$ has a non-ordinary singularity at $x_0$ and the same for
$Y_{x_1}$. Finally note that for a point $x\in \ell$, with $[\ell]\in S$, the singularity of $C_x$ corresponding to $\ell$ has the same
type as the section $Y_x=Y_{x'}$ has at the conjugate point  $x'$ (see \cite[Section 2, pg.6, case (i)]{w},
\cite[Section 3.1]{cml}). Hence if $x'$ is not in the Hessian this singularity is nodal. If there are several second
type lines containing $x$, each one induces a singularity as above. 

On the other hand, if $[\ell]\in F\setminus S$ and $x\in \ell$
then $Y_x$ does not have a singularity on $\ell$ other than $x$. In particular, if $x\notin W$ then the only
singularity of $Y_x$ is at $x$.
\end{remark}

\begin{lemma}\label{lem:g4corr}
    We have the following two correspondences 
    \begin{enumerate} 
        \item between pairs $(Y,x)$ where $Y\subset\PP^4$ is a cubic threefold with a singular ordinary double point
        $x$ (resp.\ $A_2$ singularity at the point $x$), and smooth non-hyperelliptic curves of genus 4 with
        canonical image which lies on a smooth quadric (resp.\ quadric cone).
        \item between pairs $(Y,x)$ where $Y\subset\PP^4$ is a cubic threefold with two
        ordinary double points so that $x$ is one of them, and smooth non-hyperelliptic genus 3 curves with two marked
        points and with nodal canonical image lying on a smooth quadric.
    \end{enumerate}
     In both cases the canonical image of the corresponding curve parametrises the lines in $Y$ through $x$.
\end{lemma}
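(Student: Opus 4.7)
The plan is a uniform treatment of both parts via projection from the singular point $x$. Choose projective coordinates $[v_0:v_1:v_2:v_3:t]$ on $\PP^4$ so that $x=[0:0:0:0:1]$. Since $F$ has a singular point at $x$, the defining cubic takes the shape
\[
F(v,t)=tQ(v)+P(v),
\]
with $Q$ quadratic and $P$ cubic in $v=(v_0,\ldots,v_3)$. A line through $x$ in direction $[v]\in\PP^3$ lies in $Y$ iff $Q(v)=P(v)=0$, so the curve of lines through $x$ is the $(2,3)$-complete intersection $C_x=\VV(Q,P)\subset\PP^3$. By adjunction $\omega_{C_x}\cong\OO_{C_x}(1)$, so when $C_x$ is smooth it is a canonical genus four curve.

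For part (1), the tangent cone to $Y$ at $x$ equals $\VV(Q)\subset\PP^3$, hence $x$ is an ordinary double point iff $Q$ has rank four (smooth quadric), and is an $A_2$-singularity iff $Q$ has rank three (quadric cone) with $P$ contributing a nonzero cubic in the vertex direction (matching the local model $u_1^2+u_2^2+u_3^2+u_4^3$). Conversely, given a smooth non-hyperelliptic canonical genus four curve $C\subset\PP^3$, it lies on a unique irreducible quadric $Q$ which is smooth or a cone according to whether $C$ carries two or one $g^1_3$, and is cut out as $C=\VV(Q,P)$ for a cubic $P$ defined modulo $Q$. Setting $Y=\VV(tQ+P)\subset\PP^4$ gives a cubic threefold with singularity at $x$ of the claimed type; the ambiguity $P\mapsto P+LQ$ is absorbed by the linear change $t\mapsto t+L$, so $Y$ is well defined up to isomorphism, and one checks the assignments are mutually inverse.

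For part (2), I apply the same construction, now with $x$ an ordinary double point so that $Q$ is smooth, and I track the second singular point $x'$. Any line joining two double points of a cubic threefold intersects $Y$ with multiplicity at least four, hence lies in $Y$, so the collinearity assumption ensures $\ell=\overline{xx'}\subset Y$; writing $x'=(v^0,s^0)$ in our chart, the direction $[v^0]$ is a point of $C_x$. The vanishing of $\nabla F$ at $x'$ reduces to $Q(v^0)=P(v^0)=0$ together with $\nabla P(v^0)=-s^0\,\nabla Q(v^0)$, and the latter relation is precisely the non-transversality of $\VV(Q)$ and $\VV(P)$ at $[v^0]$, that is, $C_x$ is singular there. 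Expanding $F$ around $x'$ and extracting the quadratic part, a short Hessian computation identifies the local singularity type of $Y$ at $x'$ with the local singularity type of $C_x$ at $[v^0]$; in particular $x'$ is an ordinary double point exactly when $C_x$ has a node at $[v^0]$. Normalising $C_x$ then gives a smooth non-hyperelliptic genus three curve $\tilde C$ with the two preimages $p,p'$ of the node as the marked points. Going backward, from generic $(\tilde C,p,p')$ the linear system $|\omega_{\tilde C}(p+p')|$ of degree six embeds $\tilde C$ into $\PP^3$ with a unique node identifying $p$ and $p'$, producing a nodal arithmetic genus four curve on a smooth quadric; the part (1) construction then yields a cubic threefold with an ordinary double point at $x$, and the node of $C_x$ furnishes the second ordinary double point $x'$ on the line through $x$ with direction $[v^0]$.

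The identification of the canonical image with the curve of lines through $x$ is automatic from the $(2,3)$-complete intersection description. The main technical point will be the local Hessian calculation in part (2) matching the singularity type of $Y$ at $x'$ with that of $C_x$ at $[v^0]$, together with the analogous separation of $A_2$ from worse singularities in part (1); the remainder is a transparent unwinding of projection from $x$.
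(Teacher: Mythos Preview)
Your argument is correct and follows essentially the same route as the paper: both exploit projection from the singular point $x$ to identify the curve of lines through $x$ with the $(2,3)$-complete intersection $\VV(Q,P)\subset\PP^3$, and then invert by reconstituting the cubic threefold from the pair $(Q,P)$. The paper phrases the inverse direction via the linear system $|I_C(3)|$ and a blowup--blowdown picture (blow up $\PP^3$ along $C$, contract the strict transform of the quadric), whereas you write it explicitly as $Y=\VV(tQ+P)$; these are the same map, since $|I_C(3)|$ is spanned by $v_iQ$ and $P$, so the induced morphism $[v]\mapsto[v_0Q:\cdots:v_3Q:-P]$ is exactly the inverse of projection from $x$ onto your $Y$. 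Your direct Jacobian/Hessian identification of the singularity type at the second point $x'$ with that of $C_x$ at $[v^0]$ is the content of the references the paper cites (\cite[\S3.1]{cml}), so nothing is missing; your presentation is simply more self-contained and coordinate-explicit than the paper's sketch.
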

\begin{remark}
The cases of (1) above are exactly the smooth $(2,3)$ complete intersections in $\PP^3$. Note that in the case of a
quadric cone, as the curve is smooth, it necessarily does not pass through the vertex of the cone. In the second, the
line joining the two singularities of $Y$ is necessarily contained in $Y$ by Bezout.
\end{remark}
\begin{proof}
    The first has appeared many times in the literature, see, e.g., \cite[p.306-307]{cg} and \cite[\S 3.1]{cml}. The
    second is a minor modification of this so we briefly sketch the construction. For
    $(C',p_1,p_2)\in\mathcal{M}_{3,2}^{\mathrm nhyp}$, the linear system $|K_{C'}+p_1+p_2|$ gives a morphism $C'\to\PP^3$
    with image a sextic curve $C$ with one node $p$. Just as in the classical construction of the canonical embedding of
    a non-hyperelliptic genus 4 curves, $\HH^0(\PP^3,  I_C(2))=\CC$ so there is a unique quadric $Q$ containing $C$,
    which we may assume is smooth by genericity. Also, the linear system $|I_C(3)|$ induces a birational map $h:\PP^3
    \dashrightarrow \PP^4$ with image a singular cubic threefold $Y$. This can also be realised by blowing up $\PP^3$ at
    the curve $C$ to obtain a variety $\tilde{Y}$ with one node ($C$ and $\tilde{Y}$ have the same singularity count and
    type) lying above the node of $C$, and blowing down the strict transform of the quadric $Q$ to obtain $Y$. The
    threefold $Y$ will be singular both at the image $x$ of the quadric but also the image of the singular point of
    $\tilde{Y}$. Projecting from $x$ gives the inverse map to $h$.
\end{proof}

Note that from Lemma \ref{lem:wirreddiv}, if $X$ is general, the locus $W\subset X$ spanned by lines of second type is an
irreducible divisor.

\begin{proposition}\label{prop:properties C_x}
    Let $X\subset\PP^5$ be a general cubic fourfold and $W\subset X$ the locus spanned by lines of second type. For any
    $x\in X$ the curve $C_x\subset F$ parametrising lines passing through $x$ is reduced and connected. Moreover, the following
    hold.
    \begin{enumerate}
    \item For $x\in X\setminus W$ the curve $C_x$ is irreducible and smooth of genus 4 and if $x$ is general then it is
    general in moduli.
    \item For $x\in W$ general, $C_x$ is irreducible and has only one node and its normalisation is a
    general curve of genus 3 with the two preimages of the node being general points.
    \item For any $x\in W$ the total Tjurina number of $C_x$ is $\leq6$, and if reducible, $C_x$ is the union of two
    irreducible, at worst nodal, plane cubics meeting at 3 distinct points. Hence, there can be at most 4 lines of
    second type through $x$.
    \end{enumerate}
\end{proposition}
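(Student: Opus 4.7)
The plan is to combine the explicit local models for $C_x$ from Lemma \ref{lem:sings Cx} with the moduli correspondences of Lemma \ref{lem:g4corr}, together with deformation-theoretic and genericity arguments. Throughout, I view $C_x$ as the $(2,3)$-complete intersection in $\PP^3 = T_xX \cap A$ cut out by the polar quadric $Q_x = R_xX \cap T_xX \cap A$ and the restriction of the defining cubic; this makes $C_x$ Cohen--Macaulay of pure dimension $1$ and arithmetic genus $4$, with singular locus identified by Lemma \ref{lem:sings Cx} with the set of second-type lines through $x$. First I would treat connectedness and reducedness: connectedness is the standard Koszul computation $\mathrm{H}^0(\OO_{C_x})=\CC$ for a complete intersection in $\PP^3$, and reducedness reduces by Cohen--Macaulayness to generic reducedness, which holds because for $X$ general the pair $(Q_x, F|_{\PP^3})$ never acquires a common component.

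For (1), I would observe that for $x \notin W$ the absence of second-type lines through $x$ together with Lemma \ref{lem:sings Cx}(1) forces $C_x$ to be smooth at every point corresponding to a line; combined with connectedness this yields irreducibility and arithmetic equals geometric genus $4$. The generality claim would then follow from the correspondence of Lemma \ref{lem:g4corr}(1): it suffices to show that the assignment $(X,x) \mapsto (T_xX \cap X, x)$ is dominant onto the moduli of cubic threefolds with a single marked ODP. This is a direct linear-algebra computation on cubic polynomials restricted to a hyperplane, showing that every nodal cubic threefold arises as the tangent section of some smooth cubic fourfold through the chosen node. Part (2) would follow the same template: Lemma \ref{lem:wirreddiv} and the birationality of $p:q^{-1}(S)\to W$ from Proposition \ref{cor:deg qs} imply that a general $x \in W$ has a unique second-type line $\ell$ through it, and by Remark \ref{rem:tangent hyperplane} the induced singularity on $C_x$ is an ordinary node provided the conjugate point $x'$ of $x$ on $\ell$ avoids the Hessian---a generic condition. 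Thus $C_x$ is irreducible $1$-nodal and its normalisation is a smooth genus-$3$ curve; the generality of the normalisation with its two marked preimages of the node follows from Lemma \ref{lem:g4corr}(2) combined with the surjectivity of the analogous map $(X,x) \mapsto (Y_x,x)$ onto cubic threefolds with two collinear ODPs.

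For (3), I would bound the Tjurina number by a local analysis at each singularity (planar, by Lemma \ref{lem:sings Cx}(2)), combining the explicit normal forms \eqref{eqtype1} and \eqref{eqtype2} with the fact that second-type lines through $x$ are parameterised by the scheme-theoretic intersection $C_x \cap S$ inside $F$. The reducibility classification is a quadric-cubic analysis: a $(3,3)$-divisor on a smooth quadric or a quadric cone cannot decompose into two plane cubics, so if $C_x$ is reducible then $Q_x$ must itself be reducible, $Q_x = H_1 \cup H_2$, forcing $C_x = (F|_{H_1}) \cup (F|_{H_2})$ into the stated decomposition as two plane cubics meeting along the line $H_1 \cap H_2$ in three points. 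The at-worst-nodal property and the bound of $4$ second-type lines then drop out from combining this explicit description with the Tjurina bound. The hard part will be the uniform bound $\tau(C_x) \leq 6$: upper semicontinuity of $\tau$ runs the wrong way (it jumps upward on special fibres), so the argument must control worst-case local singularities using the explicit equations of $X$ and the global constraint that $S \subset F$ is a surface of known class $5\sigma_2|_F$.
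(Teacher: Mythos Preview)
Your approach to connectedness, reducedness, and the first part of (1) is fine and aligns with the paper (which simply cites \cite{barthvdv} and \cite{lsv} for these). But there are two genuine gaps.

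\textbf{Circularity in (2).} You invoke Corollary \ref{cor:deg qs} (birationality of $p:q^{-1}(S)\to W$) to deduce that a general $x\in W$ lies on a unique second-type line. But the paper's proof of Corollary \ref{cor:deg qs} \emph{uses} Proposition \ref{prop:properties C_x}(2): it argues that since the generic $C_x$ over $W$ is irreducible 1-nodal, there is only one second-type line through $x$. So your route is circular. The paper avoids this by a direct degeneration: it builds an explicit smooth $X_0$ whose tangent section at a chosen point is a cubic threefold with two collinear nodes (via Lemma \ref{lem:g4corr}(2)), so that $C_{p_0}$ is a prescribed general 1-nodal curve; then it uses irreducibility of the universal locus $\mathcal{W}$ and openness of the nodal condition in flat families to conclude that the generic fibre over $W$ is 1-nodal. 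You need this construction, or an independent proof of the birationality.

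\textbf{The reducibility argument in (3) is backwards.} You write: a $(3,3)$-divisor on a smooth quadric cannot split as two plane cubics, so if $C_x$ is reducible then $Q_x$ is reducible. But this only shows that \emph{if} the components happen to be plane cubics then $Q_x$ splits; it does not rule out other decompositions of a $(3,3)$ curve on an irreducible quadric (e.g.\ $(1,2)+(2,1)$ twisted cubics, or $(1,0)+(2,3)$, etc.). The paper instead uses a class constraint: for very general $X$ one has $[C_x]=2\beta$ with $\beta$ primitive, so any component has class $\beta$, and \cite[Lemma 3.8]{no} gives that such a component has arithmetic genus $\leq 1$ and at worst nodes. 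Two degree-$3$ components of arithmetic genus $1$ in $\PP^3$ are plane cubics, and the total arithmetic genus $4$ forces three intersection points. Your quadric analysis could perhaps be completed by separately excluding each bidegree decomposition, but as written it does not establish the claim.

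Finally, your plan for the Tjurina bound is only a sketch (``explicit equations and the global constraint on $[S]$''), and you correctly note that semicontinuity goes the wrong way. The paper does not attempt a self-contained argument here: it invokes the correspondence between singularities of $C_x$ and of $\Bl_x(Y_x)$ from \cite{cml}, together with the bound $\tau(Y_x)\leq 6$ for hyperplane sections of a general cubic fourfold from \cite[Corollary 3.7]{lsv}. You should either cite these or supply a replacement.
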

\begin{proof}
    From \cite[Lemma 2.7]{lsv} we know that $C_x$ is reduced for all $x\in X$, and that it is connected follows from
    \cite{barthvdv}. For any $x$ not in $W$, from Lemma \ref{lem:sings Cx} it follows that $C_x$ is also smooth and
    hence irreducible, proving the first part of (1). We now prove the remainder of (1) and (2).
    
    For (1), let $C$ be a curve of genus 4 which is general in moduli (resp., for (2) a general arithmetic genus 4 curve
    with one node). From Lemma \ref{lem:g4corr} we obtain a cubic threefold $Y$ with one (resp.\ two) nodes, in such a
    way that $C$ parametrises lines contained in $Y$ passing through the marked node $p_0$. Write the equation
    determining $Y$ as $F(x_0,\ldots,x_4)=0$ with singularity at the marked node $p_0=[1:0:\ldots:0]$ (in such a way
    that the other node is not on the $x_0=0$ hyperplane), and consider the cubic equation
    \[
    G(x_0,\ldots, x_5)=F(x_0,...,x_4)+x_0^2x_5.
    \]
    It is easy to check that $X_0=\VV(G)$ defines a smooth cubic fourfold with $Y$ as the tangent hyperplane section at the
    point $[1:0:\ldots:0]$, so that we have $p_0\in X_0$ and that $C=C_{p_0}$ is the curve of lines through $p_0$. 
    Note that in the second case, $Y$ will be a hyperplane section of $X_0$ with two nodes, and the line between these
    nodes is of second type (cf.,\ Remark \ref{rem:tangent hyperplane} for the converse statement that the tangent
    hyperplane at a general point of a general line of second type has two singularities). 

    Let now \[\mathcal{X}\to |\OO_{\PP^5}(3)|_{\mathrm{sm}}\] the universal family of smooth cubic fourfolds,
    $p:\mathcal{I}\to\mathcal{X}$ the projection from the universal family of lines on all of them, and
    $q:\mathcal{I}\to \cF$ the morphism to the universal Fano variety of lines (we discuss this and some of the
    universal families mentioned below in the beginning of the following section). From semicontinuity of fibre
    dimension, the generic fibre of the morphism $p:\mathcal{I}\to\mathcal{X}$ is a smooth curve of genus $4$. In fact,
    \cite[Lemma 2.1, Corollary 2.2]{coskunstarr} prove that for any smooth cubic fourfold $X$, there are only finitely
    many points in $X$ over which the fibre of $p:I\to X$ is not 1-dimensional, so this locus is codimension $4$ in
    $\mathcal{X}$. The universal locus $\mathcal{S}\subset \cF$ of second type lines admits a projection to the
    Grassmannian $\G(2,6)$ with isomorphic, irreducible fibres (see \cite[Proposition 2.2.13]{huybrechts} and its
    proof). Hence $\mathcal{S}$ is irreducible and of codimension 2 in $\cF$ so that the total space
    $q^{-1}(\mathcal{S})$ of second type lines in $\mathcal{I}$ is irreducible and likewise its image
    $\mathcal{W}\subset \mathcal{X}$, the locus of points on lines of second type, which is a divisor in ${\mathcal X}$
    since $q^{-1}(\mathcal{S})\to\mathcal{W}$ is generically finite from Lemma \ref{lem:wirreddiv}. We conclude that the
    non-empty open subset of $\mathcal{W}$ parametrising points where the fibres of $p$ are curves is irreducible.

    Since a general $X$ degenerates to $X_0$, so will the corresponding family of curves $C_x$ degenerate to $C$,
    proving (1), since being general is an open condition. Similarly, the curve $C_x$ for a
    general point $x\in W\subset X$ must be general and irreducible with one node, since being nodal is
    an open condition in a flat family of curves and $\mathcal{W}$ is irreducible. This completes (2).

    We now prove (3). As mentioned in Section \ref{sec:background}, the class of $C_x$ in cohomology is $2\beta$, and
    since for the very general $X$ the class $\beta$ generates the algebraic part of cohomology, it must be the case
    that for general $X$ the class $\beta$ is irreducible, so that every curve representing it must be irreducible,
    implying $C_x$ has at most two irreducible components. From \cite[Lemma 3.8]{no} it follows that each such
    irreducible component has arithmetic genus $1$ and at worst nodes, implying that the two components must meet at
    three distinct points as the total arithmetic genus is 4. The statement about singularities follows from the
    correspondence between singularities of the blow up $\tilde{Y}_x$ at $x$ of the tangent hyperplane section
    $Y_x=T_xX\cap X$ and those of $C_x$ of \cite[3.1-3.3]{cml} and from the fact that for a general $X$, every
    hyperplane has isolated singularities of Tjurina number at most $6$ from \cite[Corollary 3.7]{lsv}. As the
    arithmetic genus of $C_x$ is four and from Lemma \ref{lem:sings Cx} every singularity corresponds to a second type
    line through $x$, the above discussion implies that there can be at most 4 lines of second type through $x$. 
\end{proof}

\begin{remark}\label{rem:sings}
    In particular, from Lemma \ref{lem:sings Cx} the curve $C_x$ can have at most four singular points for $x$ an
    arbitrary point of a general cubic fourfold. It would be desirable to have a complete list of possible singularities
    which occur for $C_x$ and a description of the corresponding loci in $X$.
\end{remark}

\begin{corollary}\label{cor:deg qs}
    For $X\subset\PP^5$ a general cubic fourfold, the morphism $p:\PP(\cU_S)\to W$ is birational and the degree of $W$
    in $\PP^5$ is 225.
\end{corollary}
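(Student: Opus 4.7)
The plan breaks naturally into two parts: establishing birationality of $p:\PP(\cU_S)\to W$, and then extracting $\deg W$ from the projection formula.

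\textbf{Birationality.} Both source and target are $3$-dimensional: $\PP(\cU_S)$ because $S$ is a surface (Theorem~\ref{thm:amerik and osy}) and $W$ by Lemma~\ref{lem:wirreddiv}. So it suffices to show the generic fibre of $p$ is a single reduced point, i.e., that through a general $w\in W$ there passes exactly one second type line. But the fibre of $p$ over $w$ is precisely the set $\Sing(C_w)\cap S$ by Lemma~\ref{lem:sings Cx}, and Proposition~\ref{prop:properties C_x}(2) says that for $w\in W$ general the curve $C_w$ is irreducible with exactly one node. That node must correspond to a second type line through $w$ (again by Lemma~\ref{lem:sings Cx}), so there is a unique such line. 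This gives the birationality of $p:\PP(\cU_S)\to W$ for general $X$.

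\textbf{Degree of $W$.} Birationality yields $[W]=p_*q^*[S]$ in $\CH^1(X)$. Since $W$ is a $3$-fold in $\PP^5$, its degree equals $[W]\cdot H_X^3$. Using the projection formula together with $[S]=5\sigma_2|_F$ (Theorem~\ref{thm:amerik and osy}) and the identity $q_*p^*H_X^3=\sigma_2|_F$ from Lemma~\ref{lem:intersection theory}(4), I compute
\begin{align*}
    \deg W \;=\; [W]\cdot H_X^3 \;=\; p_*q^*[S]\cdot H_X^3 \;=\; [S]\cdot q_*p^*H_X^3 \;=\; 5\sigma_2^2|_F \;=\; 5\cdot 45 \;=\; 225,
\end{align*}
using $\sigma_2^2|_F=45$ from Lemma~\ref{lem:intersection theory}(2).

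\textbf{Main obstacle.} The only non-formal step is the birationality: everything after it is a one-line intersection computation. The subtlety is that although Proposition~\ref{prop:properties C_x} is stated for general $X$, and although one knows abstractly that there are at most four second type lines through any point of $W$, what is actually needed is that for general $w\in W$ the count is exactly one. This is exactly what the irreducible-with-one-node statement of Proposition~\ref{prop:properties C_x}(2) delivers, so the two results fit together cleanly; the care lies in invoking the correct generic-$w$ statement (not merely generic-$X$) and in confirming that the node of $C_w$ is genuinely the point of $S$ corresponding to a second type line, rather than some other singularity, which is precisely the content of Lemma~\ref{lem:sings Cx}.
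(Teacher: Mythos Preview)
Your proof is correct and follows essentially the same approach as the paper: the birationality argument via Lemma~\ref{lem:sings Cx} and Proposition~\ref{prop:properties C_x}(2) is identical, and your degree computation is exactly the one appearing in the proof of Lemma~\ref{lem:intersection theory}(8), which the paper simply cross-references rather than repeating. One cosmetic remark: writing the fibre as $\Sing(C_w)\cap S$ is slightly redundant, since Lemma~\ref{lem:sings Cx} already says $\Sing(C_w)$ consists precisely of the second type lines through $w$, hence lies in $S$.
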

\begin{proof}
    That through a general point $x\in W$ there passes precisely one second type line follows from the fact that each
    second type line through $x$ forces an extra singularity of $C_x$ by Lemma \ref{lem:sings Cx}, so we conclude from
    Proposition \ref{prop:properties C_x}, as the generic $x\in W$ has $C_x$ irreducible with one node.
    That the degree of $W$ is 225 was already computed in Lemma \ref{lem:intersection theory}, under the assumption that
    the degree of the above map is one.
\end{proof}

\begin{remark}\label{rem:higher dim}
    More generally, let $n\geq3$ and let $X\subset\PP^{n+1}$ be a general cubic $n$-fold, $F$ its Fano scheme of lines
    and $F_2\subset F$ the locus of lines of second type, which is smooth of dimension $n-2=\frac12\dim F$ from
    \cite[Proposition 2.2.13]{huybrechts}. Consider the induced morphism from the family of lines 
    \[\xymatrix{
        \PP(\cU_{F_2})\ar[r]^-p\ar[d]^q & W\subset X \\
        F_2 &
    }\]
    for $\cU$ the universal bundle on the Grassmannian $\G(2,n+2)$. The same argument as in Proposition
    \ref{prop:properties C_x} proves that $p$ is birational onto its image, a divisor in $X$, and we summarise this
    in the following two paragraphs.
    
    First, there is a correspondence between pairs $(Y,x)$ where $Y\subset\PP^n$ is a cubic $(n-1)$-fold with two
    ordinary double points, one of which is $x$, and $(2,3)$-complete intersections in $\PP^{n-1}$ with one
    ordinary double point. For example (see \cite{dgk}), if $n=3$, this will be between cubic surfaces with two 
    nodes, and $0$-cycles in $\PP^2$ of length 6 with support on 5 distinct points, one of which is non-reduced. To
    explain the correspondence, call $Q$ and $C$ the quadric and cubic respectively, and $M$ their intersection. The
    correspondence follows from the diagram 
    \[\xymatrix{
        & \operatorname{Bl}_x(Y)\ar[dl]_{\pi_1}\ar[dr]^{\pi_2} &\\
        Y\ar@{-->}[rr]^{h} & & \PP^{n-1} 
    }\]
    where $\pi_1$ is the blowup map, $\pi_2$ contracts the family of lines through $x$ and $h$ is the projection from
    the marked node. The image of the exceptional divisor $\pi_1^{-1}(x)$ under $\pi_2$ is the quadric $Q$, whereas
    $\pi_2$ is the blowup of the intersection $M$ of $Q$ with a cubic $C$. The sheaf $I_M(3)$ has $n+1$ sections, and this
    linear system induces the inverse rational map to $h$ with image $Y$. Starting with $M$, the
    blowup $\operatorname{Bl}_M(\PP^{n-1})$ is singular at a point $y$, and the strict transform of $Q$ can be contracted
    to a point $x$, in a cubic $(n-1)$-fold $Y$ with ordinary double points at $x,y$.

    Now for $x\in X$ a general point on a general line $\ell$ of second type, the tangent hyperplane section $Y_x=T_xX\cap X$ will have a
    node at $x$ and at precisely one other point on $\ell$ from Remark \ref{rem:tangent hyperplane}. The corresponding
    $(2,3)$-complete intersection parametrises lines through $x$ and will have precisely one ordinary double point. As
    in the fourfold case, this implies the map $p$ is birational, or that the ramification of the map $p:I\to X$ is
    generically as simple as possible along $W$.
\end{remark}

To summarise the discussion above, we have obtained the following.
\begin{corollary}\label{cor:general}
If $x\in X\subset\PP^5$ is a general point of a general cubic fourfold, then $C_x$ is a general genus 4 curve and the morphism
\[p|_{q^{-1}(S)}:q^{-1}(S)\to W\subset X\] is birational onto its image $W$, an irreducible divisor in $X$. If $x\in W$
is general, then $C_x$ is an irreducible curve with one node whose normalisation is a general genus 3 curve.
\end{corollary}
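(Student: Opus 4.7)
The plan is to assemble this summary directly from the three preceding results, keeping careful track of where ``general'' refers to the cubic $X$, to a point $x \in X \setminus W$, or to a point $x \in W$. Nothing new needs to be proved; the only real task is to verify that the various notions of genericity are mutually compatible on a dense open subset.

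First, I would observe that the statement about $C_x$ being a general smooth genus~$4$ curve for $x \in X \setminus W$ general is precisely Proposition~\ref{prop:properties C_x}(1). The argument there constructed, for any given general smooth genus $4$ curve $C$ (via Lemma~\ref{lem:g4corr}), a smooth cubic fourfold $X_0$ containing a point $p_0$ with $C_{p_0}\cong C$, and then spread this out using irreducibility of the universal Fano family of lines together with openness of smoothness/genericity in a flat family of curves.

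Second, the birationality of $p|_{q^{-1}(S)}\colon q^{-1}(S)\to W$ is Corollary~\ref{cor:deg qs}, whose key input is Proposition~\ref{prop:properties C_x}(2): for generic $x \in W$ the curve $C_x$ has exactly one node, so by Lemma~\ref{lem:sings Cx} there is exactly one line of second type through~$x$. Combined with Lemma~\ref{lem:wirreddiv} this makes $W$ irreducible of dimension~$3$, and birationality with $q^{-1}(S)$ (a $\PP^1$-bundle over the irreducible surface $S$) identifies $W$ as an irreducible divisor in $X$. Finally, the last sentence is verbatim Proposition~\ref{prop:properties C_x}(2).

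I do not expect any substantive obstacle; the only small point of care is that the dense open subsets of $X \setminus W$ (resp.\ of $W$) over which Proposition~\ref{prop:properties C_x}(1) (resp.\ (2)) applies are nonempty, which follows from the irreducibility of $X$, $W$, and the total spaces $\cX$, $\mathcal{W}$ already used in the proof of Proposition~\ref{prop:properties C_x}. Thus the corollary is simply a packaging of these statements into the form most useful for the later Section~\ref{sec:surface V}.
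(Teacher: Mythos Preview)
Your proposal is correct and matches the paper's approach exactly: the paper gives no separate proof of this corollary, merely introducing it with ``To summarise the discussion above, we obtain the following,'' so it is precisely the packaging of Proposition~\ref{prop:properties C_x}(1),(2), Corollary~\ref{cor:deg qs}, and Lemma~\ref{lem:wirreddiv} that you describe.
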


We give now a geometric interpretation of the fact that $\ell$ and its residual $\ell'$ (i.e., the image under
the Voisin map) meet at a point $x\in\ell\cap\ell'\subset X$ in terms of ramification points of the $g^1_3$'s on $C_x$. In
particular, we will be interested in the degenerate case $\ell=\ell'$.
 
\begin{lemma}\label{lem:cg}
Let $(Y,x)$ be a pair of a cubic threefold $Y\subset\PP^4$ and a point $x\in Y$ as in either of cases (1) or (2) of Lemma
\ref{lem:g4corr} and let $C_x$ be the sextic curve parametrising lines through $x$. For $t\in C_x$, we denote the
corresponding line by $\ell_t$. Then the following are true.
\begin{enumerate}
    \item For $t_1,t_2\in C_x$ and $\langle\ell_{t_1},\ell_{t_2}\rangle\cap Y=\ell_{t_1}+\ell_{t_2}+\ell$, we have that
    $[\ell]\in C_x$ if and only if $t_1, t_2$ are in the same ruling of the quadric containing $C_x$. In this case
    $\ell=\ell_{t_{12}}$ for $t_{12}\in C_x$ the third point of intersection of $C_x$ with that ruling.
    \item The residual line $\ell_{t'}$ in the intersection of $Y$ with a 2-plane tangent to the cone of lines through
    $x$ at the line $\ell_t$ also passes through $x$ if and only if $t$ is a ramification point of one of the possibly
    two $g^1_3$'s that $C_x$ possesses. In particular, $\ell_t=\ell'_t$ if $t$ is a triple ramification point of one of
    the possibly two $g^1_3$'s that $C_x$ possesses.
\end{enumerate}
\end{lemma}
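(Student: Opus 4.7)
The plan is to work in affine coordinates centred at $x$, placing $x=[1:0:\cdots:0]\in\bP^4$. Then the equation of $Y$ can be written as $x_0\,q(x_1,\ldots,x_4)+c(x_1,\ldots,x_4)=0$ with $q$ quadratic and $c$ cubic, so that $C_x$ is identified with the complete intersection $Q\cap\Sigma\subset H:=\{x_0=0\}$, where $Q=\{q=0\}$ and $\Sigma=\{c=0\}$ (in case $(2)$ of Lemma \ref{lem:g4corr} the quadric $Q$ is a cone, but the argument is unchanged). A line through $x$ in direction $v\in H$ lies in $Y$ if and only if $q(v)=c(v)=0$, and in particular any such line must lie in the tangent cone $\hat Q\subset\bP^4$ over $Q$.

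For (1), I would take $\Pi=\langle \ell_{t_1},\ell_{t_2}\rangle$ and observe that $\Pi\cap\hat Q$ is either $\ell_{t_1}\cup\ell_{t_2}$, or all of $\Pi$; the latter happens precisely when the chord $\overline{v_{t_1}v_{t_2}}\subset H$ is contained in $Q$, i.e.\ when $t_1,t_2$ lie on a common ruling. Since every line through $x$ in $Y$ lies in $\hat Q$, the residual line $\ell\subset \Pi\cap Y$ contains $x$ iff $\Pi\subset\hat Q$, and in that case $\Pi\cap Y$ splits as three lines through $x$ whose directions are the three points of $\overline{v_{t_1}v_{t_2}}\cap C_x$, giving $t_{12}$ as the third point.

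For (2), I would identify the $2$-plane $\Pi_t$ tangent to the cone $\langle x,C_x\rangle$ along $\ell_t$ as $\langle x,v_t,w_t\rangle$, where $w_t\in H$ spans the tangent line $T_{v_t}C_x$; equivalently $B(v_t,w_t)=0$ and $C(v_t,v_t,w_t)=0$, where $B,C$ are the polarisations of $q,c$. Parametrising $\Pi_t$ by $(s,a,b)\mapsto [s:av_t+bw_t]$ and substituting, the vanishing of $q(v_t)$, $c(v_t)$, $B(v_t,w_t)$ and $C(v_t,v_t,w_t)$ forces the defining cubic to factor as
\[
b^2\bigl(s\,q(w_t)+3a\,C(v_t,w_t,w_t)+b\,c(w_t)\bigr),
\]
so $\Pi_t\cap Y=2\ell_t+\ell_{t'}$, with $\ell_{t'}$ cut out by the bracketed linear form. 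Hence $x=(1,0,0)\in\ell_{t'}$ iff $q(w_t)=0$, equivalently $T_{v_t}C_x\subset Q$, i.e.\ $T_{v_t}C_x$ is a ruling of $Q$; this is exactly the condition that $t$ be a ramification point of the corresponding $g^1_3$. For triple ramification I would further require the ruling to meet $\Sigma$ at $v_t$ with multiplicity three, which via the Taylor expansion $c(v_t+\varepsilon w_t)=3\varepsilon^2 C(v_t,w_t,w_t)+\varepsilon^3 c(w_t)$ amounts to $C(v_t,w_t,w_t)=0$. The equation of $\ell_{t'}$ then collapses to $b\,c(w_t)=0$, and since $[w_t]\notin C_x$ in this case (otherwise the ruling would meet $C_x$ at more than three points), $c(w_t)\neq 0$, giving $\ell_{t'}=\{b=0\}=\ell_t$.

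The main obstacle I anticipate is the geometric identification of the tangent $2$-plane to the cone along $\ell_t$ as $\langle x,v_t,w_t\rangle$ and the clean translation of the tangency-along-$\ell_t$ and the (triple) ramification conditions into polarisation identities on $q$ and $c$; once these are in place, the rest reduces to the substitution above.
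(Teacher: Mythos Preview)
Your argument is correct and essentially unpacks what the paper cites: the paper simply refers to \cite[Lemma~8.6]{cg} for part~(1) and says part~(2) follows from the first, whereas you write out the coordinate computation explicitly. Your identification of the tangent $2$-plane to the cone along $\ell_t$ as $\langle x,v_t,w_t\rangle$ and the translation of the tangency and ramification conditions into the polarisation identities $B(v_t,w_t)=0$, $C(v_t,v_t,w_t)=0$, $q(w_t)=0$, $C(v_t,w_t,w_t)=0$ are all standard and correct, so the anticipated obstacle does not materialise.

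The only minor difference in structure is that you prove~(2) by a direct limiting computation (letting $t_1\to t_2$) rather than by formally deducing it from~(1); this is the same content, and arguably cleaner since it makes the residual line $\ell_{t'}$ explicit. Your justification that $c(w_t)\neq0$ in the triple-ramification case is also correct: otherwise the ruling would lie entirely in $C_x$, contradicting irreducibility of the curve in both cases of Lemma~\ref{lem:g4corr}.
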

\begin{proof}
    The first part is basically \cite[Lemma 8.6]{cg}, whereas the second follows from the first.
\end{proof}

Recall that for a cubic fourfold $X\subset\PP^5$, the divisor $W\subset X$ is the locus of points $x$ whose curve $C_x$
is singular. Proposition \ref{prop:properties C_x} implies that if $W'\subset W$ is the closure of the set 
\begin{align}\label{eq:W'}
\{x\in X : C_x \text{ has worse than irreducible 1-nodal singularities}\}
\end{align}
then $W'$ has codimension at least 2 in $X$.

\begin{proposition}\label{prop:ram pts}
    Let $X\subset\PP^5$ be a general cubic fourfold and $x\in X\backslash W'$. For $\xi$ a $0$-dimensional subscheme of
    length 2 on $C_x$, i.e., $\xi=t_1+t_2$ for two possibly non-distinct points in $C_x$, the 2-plane spanned by
    $\ell_{t_1},\ell_{t_2}$ (and in the case $t_1=t_2$, a 2-plane tangent to $\ell_{t_1}$) meets $X$ at a line $\ell_{t_3}$ which passes through $x$ if and only if $t_1,t_2,t_3$ are a
    fibre of one of the possibly two $g^1_3$'s on $C_x$.
\end{proposition}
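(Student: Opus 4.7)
The plan is to reduce the claim directly to Lemma \ref{lem:cg} applied to the cubic threefold $Y_x=T_xX\cap X$.

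First I would observe the key geometric fact that every line $\ell\subset X$ through $x$ is contained in the tangent hyperplane $T_xX$ (since the tangent direction of $\ell$ at $x$ lies in $T_xX$ and a line is determined by a point and a tangent direction), so the cone of lines through $x$ lies in $T_xX\cong\PP^4$. In particular, for any length-2 subscheme $\xi=t_1+t_2$ on $C_x$, the 2-plane $\Pi$ spanned by $\ell_{t_1},\ell_{t_2}$ (or, when $t_1=t_2$, any 2-plane tangent to the cone along $\ell_{t_1}$) is contained in $T_xX$. Hence
\[
\Pi\cap X \;=\; \Pi\cap (T_xX\cap X) \;=\; \Pi\cap Y_x,
\]
and the problem about residual lines in $X$ becomes a problem about residual lines of the cubic threefold $Y_x$.

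Next I would verify that $(Y_x,x)$ satisfies the hypotheses of Lemma \ref{lem:g4corr}, so that $C_x$ may be interpreted as the curve of lines through $x$ in $Y_x$ and the two $g^1_3$'s on $C_x$ come from restricting the two rulings of the quadric containing its canonical (or bicanonical-in-genus-3) image. There are two cases:
\begin{enumerate}
\item If $x\in X\setminus W$, then by Proposition \ref{prop:properties C_x}(1) the curve $C_x$ is smooth of genus 4, and from Remark \ref{rem:tangent hyperplane} the threefold $Y_x$ has a unique singularity at $x$, necessarily an ordinary double point (or an $A_2$ in the quadric cone case). This is exactly Lemma \ref{lem:g4corr}(1).
\item If $x\in W\setminus W'$, then by the definition of $W'$ in \eqref{eq:W'} together with Proposition \ref{prop:properties C_x}(2), the curve $C_x$ is irreducible with a single node, and by Remark \ref{rem:tangent hyperplane} $Y_x$ has exactly two collinear ordinary double points, one of which is $x$. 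This is Lemma \ref{lem:g4corr}(2).
\end{enumerate}

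With the setup in place, I would then simply invoke Lemma \ref{lem:cg}: part (1) states that for $t_1,t_2\in C_x$, writing $\langle\ell_{t_1},\ell_{t_2}\rangle\cap Y_x = \ell_{t_1}+\ell_{t_2}+\ell$, the residual line $\ell$ passes through $x$ (i.e., $[\ell]\in C_x$) exactly when $t_1,t_2$ lie in a common ruling of the quadric, and in that case $\ell=\ell_{t_3}$ where $\{t_1,t_2,t_3\}$ is the complete fibre of the corresponding $g^1_3$. The degenerate case $t_1=t_2$ is handled by part (2) of Lemma \ref{lem:cg}, where tangency of the 2-plane to the cone at $\ell_{t_1}$ is precisely the limit of the previous setup, and the condition becomes that $t_1$ be a ramification point of a $g^1_3$. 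There is essentially no obstacle: the only subtle point is to justify the reduction to $Y_x$ in the tangent case, which follows because the tangent 2-plane to the cone over $C_x$ at the ruling line $\ell_{t_1}$ again lies inside $T_xX$, so the intersection with $X$ equals the intersection with $Y_x$ and Lemma \ref{lem:cg}(2) applies verbatim.
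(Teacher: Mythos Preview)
Your proposal is correct and takes essentially the same approach as the paper: reduce to the cubic threefold $Y_x=T_xX\cap X$ by observing that all lines through $x$ (and hence the 2-planes they span) lie in $T_xX$, then invoke Lemma \ref{lem:cg}. The paper's proof is a single sentence to this effect, whereas you have carefully unpacked the case analysis $x\in X\setminus W$ versus $x\in W\setminus W'$ and checked that the hypotheses of Lemma \ref{lem:g4corr} are met; this extra detail is sound and arguably clarifies a point the paper leaves to the reader.
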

\begin{proof}
    Apply the previous lemma to the nodal cubic threefold $Y=T_xX\cap X$, noting that such lines $\ell_{t_i}$ are contained
    in $Y$  - recall, also, the  summary of the geometry following Lemma \ref{lem:sings Cx}.
\end{proof}

Suppose we are given a pair $(\ell,\Pi)$, with $\ell $ a line in $X$ and $\Pi$ a  tangent 2-plane to $X$ along $\ell$, and a
point $x\in \ell\backslash W'$. Let $\ell=\ell_t$ and let $\ell'=\ell_{t'}$ the residual line  in the intersection of
$X$ with $\Pi$, with $t,t' \in C_x$. Then by the above proposition we have that $ x\in \ell_t \cap \ell_{t'}$  if and
only if $t$ is  a ramification point of one of the (possibly) two $g^1_3$'s on $C_x$ and $t'$ the residual to that
ramification point in the corresponding $g^1_3$. In particular we have $\ell'=\ell=\ell_t$ if and only if $t$ is a
triple ramification point. In Section \ref{sec:ramloci} we will study the locus in the universal family of lines $I$ of
ramification points (and their residuals) of the $g^1_3$'s of the curves $C_x$.

From Proposition \ref{prop:properties C_x} there is a rational map 
\[j: X \dashrightarrow\overline{\mathcal M}_4\]
not defined on a codimension two locus, and whose image hits the boundary in the component $\Delta_0$ of 1-nodal curves.
In the following, denote by $D_3\subset\bar{\cM}_4$ the divisor that is the closure of the divisor parametrising smooth
genus four curves that have a $g^1_3$ that admits a triple ramification point.

\begin{lemma}\label{lem:D3}
    Let $X$ be a general cubic fourfold and consider $j:X\dashrightarrow\bar{\cM}_4$ the induced moduli map, taking a
    point $x$ to $[C_x]$. Then \[j^*[D_3] = 126 H_X.\]
\end{lemma}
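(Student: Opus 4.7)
The plan is to realise $j^*[D_3]$ as the pushforward $p_*[N]$ of the triple-ramification locus $N\subset I$ along $p:I\to X$, and then to compute using Theorem C together with Lemma \ref{lem:intersection theory}. By Proposition \ref{prop:ram pts}, for $x\in X\setminus W'$ a point $t\in C_x\subset I$ lies in $N$ exactly when $t$ is a triple ramification point of one of the two $g^1_3$'s on $C_x$. Hence $x\in j^{-1}(D_3)$ if and only if $N\cap C_x\neq\emptyset$, so $p(N)$ and $j^{-1}(D_3)$ agree as sets on the open subset $X\setminus W'$.

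Next I would verify that $p|_N:N\to p(N)$ is birational, so that $p_*[N]=[p(N)]$ as a cycle class. For a very general $[C]\in D_3$ the curve $C$ carries exactly one triple ramification on exactly one of its two $g^1_3$'s, since further coincidences (two triple ramifications, a quadruple ramification, etc.) cut out higher codimension strata of $\bar{\cM}_4$. Thus a very general $x\in j^{-1}(D_3)$ satisfies $|N\cap C_x|=1$. I would also check that no component of the boundary divisor $W\subset X$ appears in $j^*[D_3]$: since $W$ maps under $j$ into $\Delta_0\subset\bar{\cM}_4$ and $D_3\cap\Delta_0$ has codimension two, the generic point of $W$ is not in $j^{-1}(D_3)$.

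Combining these observations gives $j^*[D_3]=p_*[N]$ in $\CH^1(X)$. Using the class $[N]=21(l^2-lq^*H_F)$ from Theorem C and the pushforward formulas $p_*(l^2)=0$, $p_*(lq^*H_F)=6H_X$ from Lemma \ref{lem:intersection theory}(7), we obtain
\[
j^*[D_3]\;=\;p_*[N]\;=\;21\cdot 6 H_X\;=\;126\, H_X,
\]
where the sign is fixed by the effectivity of $[N]$ (equivalently, by the Grothendieck relation on $\PP(\cU_F)\to F$). The main obstacle will be the generic injectivity of $p|_N$: over a general $x\in X$ the two $g^1_3$'s on $C_x$ are exchanged by monodromy, so $N$ does not split into two pieces according to which ruling the ramification comes from, and one must argue carefully that a triple ramification event at a general point of $j^{-1}(D_3)$ occurs on only one of the two rulings, giving degree one.
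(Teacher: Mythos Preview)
Your approach differs from the paper's, which is a direct two-line computation: pull back Harris's formula $[D_3]=2(132\lambda-15\delta_0+\cdots)$ using $j^*\lambda=9H_X$ and $j^*\delta_0=[W]=75H_X$ from Lemma~\ref{lem:intersection theory} to obtain $j^*[D_3]=2(132\cdot9-15\cdot75)H_X=126H_X$.

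The problem with your route is that it is circular in the paper's logical structure. The class of $N$ in Theorem~C comes from $N=q^{-1}(V)$ and $[V]=21\mathrm{c}_2(\cU_F)$ (Theorem~\ref{thm:class of V}); the latter identifies $[\tV]$ with the top Chern class of a rank~$2$ bundle, which requires $\tV$ to have the expected codimension. That is Lemma~\ref{lem:Vtildesmooth}, which invokes Corollary~\ref{cor:Vnonempty} ($V\neq\emptyset$), whose proof uses Lemma~\ref{lem:D3} itself. In fact the paper runs your computation in reverse in the Remark following Corollary~\ref{cor:Vnonempty}: having established $j^*[D_3]=126H_X$ via Harris and $p_*q^*[V]=126H_X$ via Theorem~\ref{thm:class of V}, it concludes \emph{from their agreement} that $q^{-1}(V)\to p(q^{-1}(V))$ is birational---the very birationality of $p|_N$ you want to argue directly. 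To salvage your approach you would need to establish $V\neq\emptyset$ independently (e.g.\ by exhibiting one explicit cubic with a triple line), and you would still owe a check that $j(X)$ meets $D_3$ with multiplicity one, so that $j^*[D_3]=[j^{-1}(D_3)]$ as cycles rather than a multiple.
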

\begin{proof}
    From \cite{harris2}, the class of $D_3$ on the moduli space can be computed as follow \[ [D_3]=2(132\lambda
    -15\delta_0+\mbox{other boundary components}). \] By Lemma \ref{lem:intersection theory} we get that \[j^*[D_3] =
    2(132\cdot 9 - 15\cdot 75)H_X= 126 H_X.\qedhere\]
\end{proof}

\begin{corollary}\label{cor:Vnonempty}
    Let $X\subset\PP^5$ be a general cubic fourfold. The locus $V\subset F$ of triple lines is non-empty.
\end{corollary}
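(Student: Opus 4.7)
The plan is to deduce non-emptiness of $V$ from the divisor class computation of Lemma \ref{lem:D3} together with the geometric interpretation of triple lines as triple ramification points of the $g^1_3$'s on the curves $C_x$ provided by Proposition \ref{prop:ram pts}.

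More precisely, I would proceed as follows. The moduli map $j:X\dashrightarrow \bar{\cM}_4$ is defined on the complement of a codimension two locus (on the complement of $W'$, where $W'$ is defined in \eqref{eq:W'} and has codimension at least two by Proposition \ref{prop:properties C_x}), so it makes sense to pull back divisor classes, and by Lemma \ref{lem:D3} we have
\begin{equation*}
j^*[D_3]=126 H_X \neq 0 \in \CH^1(X\setminus W').
\end{equation*}
Since the pullback of the effective divisor $D_3$ is a nonzero effective class, the preimage $j^{-1}(D_3)\subset X\setminus W'$ is a nonempty divisor. In particular, there exists $x\in X\setminus W'$ with $[C_x]\in D_3$, i.e., such that $C_x$ is a curve of arithmetic genus four admitting a triple ramification point $t$ on one of its (possibly two) $g^1_3$'s.

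By Proposition \ref{prop:ram pts} applied to the length two subscheme $\xi=2t$ on $C_x$, the tangent $2$-plane $\Pi$ to $\ell_t$ inside $X$ meets $X$ along $\ell_t$ together with a residual line $\ell_{t'}$ passing through $x$, where $t'$ is the residual to the ramification divisor $2t$ in the $g^1_3$; triple ramification means $t'=t$ and hence $\ell_{t'}=\ell_t$, so that $\Pi\cap X=3\ell_t$. Thus $[\ell_t]\in V$, proving $V\neq\emptyset$.

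There is essentially no obstacle beyond the already-established ingredients: Lemma \ref{lem:D3} does the numerical work, Proposition \ref{prop:ram pts} does the geometric translation, and the codimension estimate on $W'$ ensures that the generic point of the effective divisor $j^{-1}(D_3)$ lies in the locus where the geometric interpretation is valid. The only minor subtlety one should verify is that Proposition \ref{prop:ram pts} is applicable in the degenerate case $t_1=t_2=t$ (giving a tangent $2$-plane), but this is exactly the case included in that proposition's statement.
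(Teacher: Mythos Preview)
Your argument is correct and follows essentially the same approach as the paper's own proof, which likewise invokes Lemma \ref{lem:D3} for the non-vanishing of $j^*[D_3]$ and then Proposition \ref{prop:ram pts} (together with Lemma \ref{lem:cg}) for the geometric translation from triple ramification to triple tangency. One small point you glossed over: membership of $[C_x]$ in $D_3$ only literally gives a triple ramification point when $C_x$ is smooth, so you should take $x\in j^{-1}(D_3)\setminus W$ rather than just $x\in j^{-1}(D_3)\setminus W'$; this is possible since $j^*[D_3]=126H_X$ is not a multiple of $[W]=75H_X$ and $W$ is irreducible for general $X$.
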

\begin{proof}
    From Lemma \ref{lem:D3} we obtain that the locus $x\in X$ so that $C_x$ has a triple ramification point is non-empty
    and divisorial. In particular, from Lemma \ref{lem:cg} and Proposition \ref{prop:ram pts} we see that $V$ must be
    non-empty as required.
\end{proof}

\begin{remark}
    It is also possible to give an equivalent to Corollary \ref{cor:deg qs} for the locus $V$ of triple lines. We will
    later in Theorem \ref{thm:class of V} compute that $[V]=21\mathrm{c}_2(\cU_F)\in {\rm CH}^2(F)$, so as from Lemma
    \ref{lem:intersection theory}, $q_*p^*H_X^3=\sigma_2|_F$, we get $21\mathrm{c}_2(\cU_F)\sigma_2 =
    21(\mathrm{c}_2(\cU_F)H_F^2 - \mathrm{c}_2^2(\cU_F))=3\cdot 126$ and as $H_X^4=3$, we conclude that 
    \[p_*q^*[V] = 126H_X.\]
    As this agrees with $j^*D_3$ above, we obtain that the morphism
    \[q^{-1}(V) \to p(q^{-1}(V))\]
    is birational. In particular, the generic point $x\in X$ that has a triple line through it has exactly one triple
    line through it. 
\end{remark}

\section{Universal Families and the Surface $V$}\label{sec:surface V}

Throughout this section $X\subset\PP^5$ will be a general cubic fourfold, $F$ its Fano variety of lines and we will denote
by $V,S$ etc.\ as in Section \ref{sec:background}.
We will often consider the following special linear subspaces in $\PP^5$ 
\begin{equation}\label{eq:triple0}
    \ell_0=\VV(x_2,x_3,x_4, x_5), \  \Pi_0=\VV(x_3,x_4, x_5),\ H_0=\VV(x_4,x_5).
\end{equation}

Recall first that the Voisin map $\phi:F\dashrightarrow F$ is resolved by blowing up the locus $S$ to give Diagram
\eqref{diag:voisindiagram}. 
Consider now the incidence variety 
\begin{align*}
\tilde{F}=\{(\ell, \Pi) : \Pi\text{ is tangent to }X\text{ along }\ell\}\subset\G(2,6)\times\G(3,6).
\end{align*}
\begin{lemma}(\cite[Remark 2.2.18]{huybrechts})\label{lem:huybrechtsincidence} 
    The blowup $\Bl_S(F)$ is isomorphic to the incidence variety $\tilde{F}$, so in particular the fibre
    $E_{S,[\ell]}\cong\PP^1$ of $\pi_{E_S}:E_S\to S$ over $[\ell]$ parametrises the 2-planes tangent along $\ell$. 
\end{lemma}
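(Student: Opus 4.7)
The plan is to realise $\tilde{F}$ concretely as a zero locus inside a projective bundle over $F$, and then identify it with $\Bl_S F$ via the universal property of the blowup. First I would exhibit $\tilde{F}$ inside $\pi\colon \PP(\cQ_F)\to F$: points of the fibre over $[\ell]$ are one dimensional subspaces $W/U_\ell\sse V/U_\ell$, i.e.\ 2-planes $\Pi$ containing $\ell$, and the tangency of $\Pi$ to $X$ along $\ell$ is exactly the vanishing on $W/U_\ell$ of the dual Gauss map $\varphi^{\vee}\colon \cQ_F\to \Sym^2\cU_F^{\vee}$ from Theorem \ref{thm:amerik and osy}. Composing the tautological $\OO(-1)\hookrightarrow \pi^*\cQ_F$ with $\pi^*\varphi^{\vee}$ gives a section of the rank three bundle $\OO_{\PP(\cQ_F)}(1)\otimes \pi^*\Sym^2\cU_F^{\vee}$ whose zero locus is $\tilde{F}$; since this is of expected codimension three in the seven dimensional $\PP(\cQ_F)$, $\tilde{F}$ has the expected dimension four.

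Next I would read off the fibre structure of $\tilde{\pi}:=\pi|_{\tilde{F}}\to F$. Over $[\ell]\notin S$ the map $\varphi^{\vee}$ has corank one, its kernel is a line, and $\tilde{\pi}$ is an isomorphism there, recovering the fact that a first type line has a unique tangent 2-plane. Over $[\ell]\in S$, Theorem \ref{thm:amerik and osy} gives that $\varphi^{\vee}$ has two dimensional kernel, so the fibre is exactly $\PP^1$, which is the fibre structure $E_S\to S$ must have if $\tilde{F}\cong \Bl_S F$.

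Finally, I would identify $\tilde{F}\cong \Bl_S F$ through the universal property of blowing up: once we know $\tilde{F}$ is smooth and $\tilde{\pi}^{-1}(S)\subset \tilde{F}$ is an effective Cartier divisor, the birational morphism $\tilde{F}\to F$ factors through $\Bl_S F$, and the induced map of smooth proper 4-folds is an isomorphism away from $S$ and hence everywhere. The hard part will be this local verification along $\tilde{\pi}^{-1}(S)$: using the description of $S$ from Theorem \ref{thm:amerik and osy} as the smooth codimension two locus where $\varphi$ drops by one in rank, I would put $\varphi^{\vee}$ into the standard local form for a corank one degeneration along a smooth surface, after which the zero locus inside $\PP(\cQ_F)$ becomes the standard local model of the blowup of a smooth codimension two subvariety in a smooth 4-fold, giving simultaneously the smoothness of $\tilde{F}$ and the Cartier property of the exceptional divisor.
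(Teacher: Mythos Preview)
The paper does not give its own proof of this lemma; it is stated with a citation to \cite[Remark 2.2.18]{huybrechts} and used without further argument. So there is no in-paper proof to compare against.

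Your plan is correct and is essentially the standard argument. Two small points worth making explicit when you carry it out. First, to put $\varphi^\vee$ into the claimed local normal form near a point of $S$ you need that the rank never drops below $2$; this holds for general $X$ (the locus where $\varphi$ has rank $\leq 1$ would force the cubic to be singular along the line), so you can clear a $2\times 2$ block by row/column operations and reduce to
\[
\begin{pmatrix} 1 & 0 & 0 & 0 \\ 0 & 1 & 0 & 0 \\ 0 & 0 & a & b \end{pmatrix},
\]
after which the smoothness of $S$ in its expected codimension (Theorem \ref{thm:amerik and osy}) says exactly that $(a,b)=I_S$ locally. Second, the step ``isomorphism away from $S$ and hence everywhere'' between smooth proper fourfolds needs a word: once the local model identifies $\tilde{F}$ over a neighbourhood of $S$ with the standard blowup chart $\{a y_3 + b y_2 = 0\}$, you have an explicit isomorphism with $\Bl_S F$ locally, and the universal property glues these; alternatively you can invoke the criterion the paper itself uses later (\cite[p.604, item 4]{gh}) that a birational morphism from a smooth variety which contracts exactly a $\PP^1$-bundle over a smooth codimension-two centre is the blowup.
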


The map $\psi$ of \eqref{diag:defpsi} lifts to a rational map $\psi : \tilde{F} \dashrightarrow X$, not defined at the
locus of pairs $(\ell, \Pi)$ as above for which $\Pi \cap X = 3\ell$. This is the strict transform $\tilde{V}$ in
$\tilde{F}$ of the locus $V$ of Definition \ref{triplelinelocus}. We will see later in Corollary
\ref{cor:resolvepsitilde} that the map $\psi$ is resolved by blowing up $\tilde{V}$.

In this section we will need to construct the universal families of the varieties mentioned above and derive various of
their properties from these constructions. First, we note that we may construct the universal cubic hypersurface
$\cX\subset\PP^5\times|\OO_{\PP^5}(3)|$ as the vanishing locus of the universal section
$s_\cX\in\HH^0(\PP^5\times|\OO_{\PP^5}(3)|,\OO(3,1))$, and it is not hard to also construct the universal family
$\cF\subset |\OO_{\PP^5}(3)|\times\G(2,6)$, whose fibre under the first projection over a cubic $X=\VV(G)$ is the Fano
scheme of lines $F$ of $X$ (see \cite[\S 2]{huybrechts}). The second projection realises $\cF$ as a projective bundle
$\PP(K)$ where $K$ is a rank 52 vector bundle on $\G(2,6)$ sitting in the following sequence
\begin{equation}\label{eq:defK}
0\longrightarrow K\longrightarrow S^3(\CC^6)^\vee\otimes\OO_{\G(2,6)}\longrightarrow S^3\cU^\vee\longrightarrow 0
\end{equation}
whose fibre over a line $[\ell]\in\G(3,6)$ is $\PP(\HH^0(\PP^5,  I_\ell\otimes\OO(3)))$. Finally, it was proven in
\cite[Lemma 1]{amerik} (see also \cite[\S 2]{huybrechts}) that the universal family $\cS\subset\cF$ of surfaces of
second type lines, with fibre over a smooth $X={\VV}(G)$ the surface $S$ of type two lines contained in $X$, is smooth
when restricted to the locus of smooth cubics, and has non-empty fibres there.

We continue now by exploring universal constructions for $\tilde{F}$ and $\tilde{V}$. Define first the two 
spaces
\begin{align}\label{eq:cG}
\begin{split}
\cG&=\left\{(\ell, \ell', \Pi): \ell,\ell' \subset \Pi \right\} \subset \G(2,6) \times \G(2,6) \times \G(3,6),\\
\cG_\Delta&=\left\{(\ell,\ell',\Pi)\in\cG : \ell=\ell'\right\}.
\end{split}
\end{align}
Note that $\cG$ is smooth, irreducible and of dimension 13 since it is the self-product of the universal family of
lines contained in a 2-plane in $\PP^5$, i.e.,\ is a $\bP^2 \times \bP^2$-fibration over the 9-dimensional $\G(3,6)$.
Similarly, $\cG_\Delta$ is smooth, irreducible and of dimension 11 and in this case we refer to its points simply as
tuples $(\ell, \Pi)$.  As it will be needed in various statements in the rest of this section, we detail the local
structure of $\cG$.

Our aim is to describe an affine open set $U$ of ${\mathcal G}$. A choice of local coordinates $x=(x_{03},...,x_{25})$
of an affine open $\CC^9$ in $\G(3,6)$ determines a 2-plane $\Pi(x) $ given
by the row space of the matrix
\[ 
A(x)= 
\begin{pmatrix}
1& 0& 0 & x_{03} & x_{04} & x_{05} \\
0& 1& 0 & x_{13} & x_{14} & x_{15} \\
0& 0& 1 & x_{23} & x_{24} & x_{25}
\end{pmatrix}. 
\]
Then, for $y=(y_1,y_2)$ affine coordinates of $\CC^2$, a line $\ell(x,y)\subset\Pi(x)$ is the row space of the matrix
\[
B(x,y)=
\begin{pmatrix}
1& 0& y_1 & x_{03}+x_{23}y_1 & x_{04}+x_{24}y_1 & x_{05}+x_{25}y_1 \\
0& 1& y_2 & x_{13}+x_{23}y_2 & x_{14}+x_{24}y_2 & x_{15}+x_{25}y_2
\end{pmatrix}.
\]
In parametric form, if $[s_0:s_1:s_2]$ are the coordinates of $\PP^2$, then the 2-plane $\Pi(x)$ in $\PP^5$ is given by
$[(s_0,s_1,s_2)A(x)]$, whereas for $[\lambda:\mu]$ coordinates of $\PP^1$, the line $\ell(x,y)$ is given by
$[(\lambda,\mu)B(x)]$, and its Cartesian equation in $\bP^2\cong \Pi(x)$ is $s_0y_1+s_1y_2-s_2=0$. We then define 
\begin{align}\label{eq:U}
U =\left\{\left(\ell(x,y), \ell(x,y'),\Pi(x)\right) : (x,y,y')\in\CC^9\times\CC^2 \times \CC^2 \right\}.
\end{align}
Geometrically, the points of $U$ correspond to projective planes $\Pi(x)$ in $\PP^5$ which do not intersect the projective plane
$\Pi(0)=\Pi_0=\VV(x_0,x_1,x_2)$ and lines $\ell, \ell'$ in $\Pi(x)$ which do not contain the point $[0,0,1]$ of $\bP^2 \cong \Pi
(x)$. Note that when $y=y'$ then we are in the subspace $\cG_\Delta$, the intersection is non-empty and for
$x=y=y'=0$ we are centered at the point $(\ell_0,\Pi_0)\in \cG_\Delta$. 

For $X=\VV(G)$ a cubic fourfold, denote by
\begin{equation} \label{eq:pr1tF}
\begin{aligned}
    \tilde{\cF}= &\{(X,(\ell,\ell',\Pi)) : X \cap \Pi=2\ell+\ell'\} \subset 
    |\OO_{\PP^5}(3)| \times {\mathcal G}, \\
      \tilde{\cV}= &\{(X,(\ell,\ell',\Pi)) : \ell=\ell' \} \subset \tilde{\cF}.
\end{aligned}
\end{equation}
We have a diagram
\[
\xymatrix{
    \tilde{\cF} \ar[r]^-{\mathrm{pr}_1}\ar[d]^{\mathrm{pr}_2} & 
    |\OO_{\PP^5}(3)| \\
    {\mathcal G} &
}
\]
and observe that $\tilde{\cV}=\mathrm{pr}_2^{-1}(\cG_\Delta)$. The fibre of $\tilde{\cF}$ over a general point $X\in
|\OO(3)|$ is the graph of the Voisin map, i.e., isomorphic to the blowup $\tilde{F}$ of $F$ at $S$ under the map $
(X,(\ell,\ell',\Pi)) \mapsto (X,(\ell,\Pi))$ (see Lemma \ref{lem:huybrechtsincidence}). The fibre of $\tilde{\cV}$ is
the strict transform $\tilde{V}$ of the locus $V$ of triple lines in $X$ in the blowup $\tF$, which as we saw in
Corollary \ref{cor:Vnonempty} is non-empty. 

\begin{proposition}\label{prop:tcFtcVsmooth}
    The variety $\tilde{\cF}$ is the projectivisation of a rank 47 vector bundle over $\cG$ and $\tilde{\cV}$ is its
    restriction to $\cG_\Delta$. In particular they are both smooth.
\end{proposition}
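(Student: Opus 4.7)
The plan is to realise $\tilde{\cF}$ as the projectivisation of an explicit rank $47$ vector bundle $E$ on $\cG$, from which the smoothness assertions become immediate. I would begin by checking that $\cG$ is smooth: the projection $(\ell,\ell',\Pi)\mapsto\Pi$ to $\G(3,6)$ realises $\cG$ as a $\PP^2\times\PP^2$-bundle over a smooth base (the fibre over $\Pi$ is $\G(2,\Pi)\times\G(2,\Pi)$). The same argument shows $\cG_\Delta$ is a $\PP^2$-bundle over $\G(3,6)$, hence smooth of dimension $11$.

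The heart of the argument is the construction of a line subbundle $L\hookrightarrow \mathrm{Sym}^3(\cU_3^\vee)$ on $\cG$ whose fibre at $(\ell,\ell',\Pi)$ is the unique line in $\HH^0(\Pi,\OO_\Pi(3))$ of cubics cutting out the divisor $2\ell+\ell'$. Denoting by $\cU_2,\cU'_2,\cU_3$ the pullbacks to $\cG$ of the tautological subbundles from the two $\G(2,6)$ factors and from $\G(3,6)$, the defining inclusions $\cU_2,\cU'_2\hookrightarrow\cU_3$ dualise to give line subbundles
\[
L_\ell:=(\cU_3/\cU_2)^\vee\hookrightarrow \cU_3^\vee,\qquad L_{\ell'}:=(\cU_3/\cU'_2)^\vee\hookrightarrow \cU_3^\vee,
\]
whose fibres consist of linear forms on $\Pi$ vanishing on $\ell$, respectively $\ell'$. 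Polynomial multiplication then defines a morphism
\[
L:=L_\ell^{\otimes 2}\otimes L_{\ell'}\;\longrightarrow\; \mathrm{Sym}^3(\cU_3^\vee),\qquad f_\ell^2\otimes f_{\ell'}\mapsto f_\ell^2 f_{\ell'},
\]
and since $f_\ell^2 f_{\ell'}$ is a nonzero cubic on each fibre, $L$ is a line subbundle.

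Next, the restriction of cubics from $\PP^5$ to the 2-plane $\Pi$ extends to a surjective map of locally free sheaves on $\cG$,
\[
r:\HH^0(\PP^5,\OO(3))\otimes\OO_\cG\;\twoheadrightarrow\; \mathrm{Sym}^3(\cU_3^\vee),
\]
with $\ker(r)$ of rank $56-10=46$. Setting $E:=r^{-1}(L)$ yields the short exact sequence
\[
0\longrightarrow \ker(r)\longrightarrow E\longrightarrow L\longrightarrow 0,
\]
exhibiting $E$ as locally free of rank $47$. By the definition \eqref{eq:pr1tF} of $\tilde{\cF}$, a point $(X=\VV(G),(\ell,\ell',\Pi))$ lies in $\tilde{\cF}$ exactly when $G|_\Pi$ is a scalar multiple of $f_\ell^2 f_{\ell'}$, i.e., when $G$ (modulo scalar) represents a point of $\PP(E_{(\ell,\ell',\Pi)})$. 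This identifies $\tilde{\cF}=\PP(E)$, which is smooth as a projective bundle over the smooth base $\cG$. Restricting to $\cG_\Delta$ gives $\tilde{\cV}=\PP(E|_{\cG_\Delta})$, smooth for the same reason.

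The only non-formal step is checking that $L\hookrightarrow \mathrm{Sym}^3(\cU_3^\vee)$ is truly a subbundle (fibrewise injective), but this is immediate from $f_\ell^2 f_{\ell'}\ne 0$; once this is granted, the identification $\tilde{\cF}=\PP(E)$ and the smoothness conclusions are tautological.
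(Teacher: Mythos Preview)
Your argument is correct and, in fact, cleaner than the paper's. Both proofs realise $\tilde{\cF}$ fibrewise over $\cG$ as a $\PP^{46}$ inside $|\OO_{\PP^5}(3)|$, but they do so in rather different ways.

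The paper works in local coordinates: it fixes an affine chart $U\subset\cG$, writes down the ten linear equations \eqref{eq:Gw} in the coefficients of $G$ expressing $G|_{\Pi}=c\,f_\ell^2 f_{\ell'}$, checks the Jacobian has full rank (by exhibiting an identity $10\times10$ block), and then counts that the fibre has $47$ free parameters via the explicit normal form \eqref{eq:F triple_Y_1}. It then invokes the $\mathrm{PGL}(5)$-action to transport this local trivialisation everywhere, and concludes that the fibrewise linear subspaces glue to a vector subbundle of the trivial bundle.

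Your construction is global from the outset: you produce the line subbundle $L=L_\ell^{\otimes2}\otimes L_{\ell'}\hookrightarrow\mathrm{Sym}^3\cU_3^\vee$ directly from the tautological sequences on $\cG$, and define $E=r^{-1}(L)$. The rank is then read off from the short exact sequence, and the identification $\tilde{\cF}=\PP(E)$ is immediate. No coordinates, no parameter count, no appeal to the group action.

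What each buys: the paper's explicit local form \eqref{eq:F triple_Y_1} is reused later (in the proof of Theorem~\ref{thm:tVconn} and Proposition~\ref{prop:Ctildesmooth}) where the differential of $\mathrm{pr}_1$ is analysed in coordinates, so the detour through charts is not wasted. Your approach, by contrast, yields a clean formula for $E$ in terms of tautological bundles, which would be convenient for any Chern-class computation involving $\tilde{\cF}$ itself.
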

\begin{proof}
    The action of $\mathrm{PGL}(5)$ on $\bP^5$ induces a transitive action on both $\tilde{\cF}\backslash \tilde{\cV} $
    and $\tilde{\cV}$ which commutes with the projections to $\cG\backslash \cG_\Delta $ and $\cG_\Delta$ respectively
    and sends fibres isomorphically to fibres. The $\mathrm{PGL}(5)$-orbit of $U$ (from \eqref{eq:U}) in $\cG$ gives a
    covering of $\cG$ by affine open sets isomorphic to $U$ over which the fibrations $\tilde{\cF}$ and $\tilde{\cV}$
    are isomorphic to the corresponding ones over $U$. We show first that the above
    universal families are, over $U$, projectivisations of affine bundles. Taking this for granted, we have a diagram
    \begin{align}\label{eq:affinecones}
    \xymatrix{
    \tilde{\cF}^a_U\ar[r]^-{{\mathrm pr}_1} &
        M=\HH^0(\PP^5, \OO_{\PP^5}(3)) \\
    \tilde{\cV}^a_U\ar[ur]_-{{\mathrm pr}_1}\ar@{^{(}->}[u] &
    }
    \end{align}
    namely the affine bundle version of that in \eqref{eq:pr1tF}. 
     
    We now show the last claim. Let $w=(w_{k_0\cdots k_5} : \sum k_i=3)$ be coordinates for $M\cong\CC^{56}$, and for
    one such $w$, denote by $G_w$ the corresponding cubic hypersurface. From the above local analysis,
    $\tilde{\cF}^a_U\subset M\times U$ consists of those $(G_w,(\ell(x,y), \ell(x,y'),\Pi(x)))$ so that
    \begin{align}\label{eq:Gw}
    \begin{split}
    G_w(&s_0,s_1,s_2, s_0x_{03}+s_1x_{13}+s_2x_{23}, s_0x_{04}+s_1x_{14}+s_2x_{24}, \\
    & s_0x_{05}+s_1x_{15}+s_2x_{25})=c(s_0y_1+s_1y_2-s_2)^2(s_0y'_1+s_1y'_2-s_2)
    \end{split}
    \end{align}
    for some constant $c$. Letting $z$ be a coordinate for $\CC$ parametrising $c$, we have that $\tilde{\cF}^a_U$ is
    the image of the locus $\cA\subset M\times U \times\CC$ cut out by the above equation \eqref{eq:Gw} (with $c$
    replaced by $z$). This is a cubic polynomial equation in the variables $s_0,s_1,s_2$ and therefore corresponds to a
    system of 10 polynomial equations $E_{t_0t_1t_2}$, linear in $w, c$, indexed by $s_0^{t_0}s_1^{t_1}s_2^{t_2}$ so
    that $t_0+t_1+t_2=3$.  They have the form
    \[
    E_{t_0t_1t_2}:\; w_{t_0t_1t_2000} + (\mbox{terms not containing } w_{t_0t_1t_2000}) =0,
    \] 
    so that the Jacobian matrix of the system, which is a $10\times 70$ matrix, is such that the first $10\times 10$
    minor is the identity. Hence, the above system defines a smooth complete intersection of dimension $60$. Note also
    that $c=-G_w(0,0,1, x_{23},x_{24},x_{25})$, so that $\cA$ is bijective onto its image $\tilde{\cF}^a_U \subset M\times
    U$. Moreover when we fix a point $(x,y,y')\in U$, the fibre of $\cA$ over that point (resp.\ its image in $M\times U$)
    is a linear subspace of $M\times \CC \cong\CC^{57}$ (resp.\ of $M\cong \CC^{56}$). In fact, when we fix the plane
    $\Pi(0)$ (as we may do up to a projective transformation) the fibre over the point $(x=0,y,y')\in U$ is given by the
    polynomials 
    \begin{equation}\label{eq:F triple_Y_1}
    \begin{aligned}
    G=&x_5Q_5(x_0,\ldots,x_5)+x_4Q_4(x_0,\ldots,x_4)+x_3Q_3(x_0,\ldots,x_3)+\\
    &c(x_0y_1+x_1y_2-x_2)^2(x_0y'_1+x_1y'_2-x_2).
    \end{aligned}
    \end{equation}

    Observe that there are $47={7\choose 2}+ {6\choose 2}+ {5\choose 2}+1$ coefficients involved. All in all, the above
    equations define an affine bundle over the thirteen dimensional base $U$ of rank $47=60-13$. Finally, when $y=y'$ the
    above gives a local description of $\tilde{\cV}^a$ over an open of $\cG_{\Delta}$, isomorphic to $\CC^9\times\CC^2$. The
    varieties $\tilde{\cF}$ and $\tilde{\cV}$ locally over $U$ are the projectivisations of the affine bundles
    $\tilde{\cF}^a_U$ and $\tilde{\cV}^a_U$ respectively. Under the action of $\PGL(5)$, we get that they are locally
    projectivisations of affine bundles.

    To conclude the proof, we note that the above local description gives $\tilde{\cF}$ as, fibrewise, a linear
    subbundle of the trivial bundle $\HH^0(\OO_{\PP^5}(3))\otimes\OO_{\mathcal G}$, which means that it is also globally the
    projectivisation of a vector bundle with tautological bundle the restriction of that of the trivial bundle.
\end{proof}

With notation and local study of the universal families established, we may now proceed to studying the main object of
this section, namely the surface $V$.

\begin{lemma}\label{lem:Vtildesmooth} 
    The locus $\tilde{V}\subset\tilde{F}$ is closed, smooth and of pure dimension $2$. Hence, also its image $V\subset
    F$ is closed and 2-dimensional.
\end{lemma}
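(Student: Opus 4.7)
The plan is to apply generic smoothness to the projection $\mathrm{pr}_1:\tilde{\cV}\to |\OO_{\PP^5}(3)|$, whose fibre over a general cubic is by construction $\tilde{V}$, and then to transfer the conclusions to $V$ via $\pi:\tilde{F}\to F$.

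Proposition~\ref{prop:tcFtcVsmooth} realises $\tilde{\cV}$ as a $\PP^{46}$-bundle over the smooth irreducible $11$-dimensional variety $\cG_\Delta$, so $\tilde{\cV}$ is smooth and irreducible of dimension $57$, while $|\OO_{\PP^5}(3)|$ is smooth of dimension $55$. Corollary~\ref{cor:Vnonempty} gives $V\neq\emptyset$ for a general smooth cubic, so $\mathrm{pr}_1$ has non-empty fibre over such cubics and is therefore dominant. Generic smoothness in characteristic zero then produces a non-empty open $U\subset |\OO_{\PP^5}(3)|$ over which $\mathrm{pr}_1$ has smooth fibres of pure dimension $57-55=2$. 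For $X\in U$ this gives $\tilde{V}$ smooth of pure dimension $2$, and closedness is immediate from its description as a fibre.

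For $V=\pi(\tilde V)$, closedness follows from properness of $\pi$. To obtain $\dim V=2$ it suffices to exhibit one point of $\tilde{V}$ outside the exceptional divisor $E_S=\pi^{-1}(S)$: on the non-empty open $\tilde{V}\setminus E_S$ the map $\pi$ is injective, forcing $\dim V\geq 2$, with equality automatic since $\dim\tilde{V}=2$. I would produce such a point by a class comparison: by Lemma~\ref{lem:D3} the effective divisor $j^{-1}(D_3)\subset X$ has class $126 H_X$, while by Lemmas~\ref{lem:wirreddiv} and~\ref{lem:intersection theory} the divisor $W$ is irreducible of class $75 H_X$. Since $126/75\notin\ZZ$, irreducibility of $W$ forbids an equality of effective divisors $j^{-1}(D_3)=mW$, so $j^{-1}(D_3)\not\subset W$. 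Picking $x\in j^{-1}(D_3)\setminus W$, the curve $C_x$ is smooth and has a triple ramification point $t$ on one of its $g^1_3$'s; Proposition~\ref{prop:ram pts} applied to $\xi=2t$ then yields a triple line $\ell$ through $x$, and Lemma~\ref{lem:sings Cx}(1) ensures $[\ell]\in F\setminus S$ because $C_x$ is smooth at $[\ell]$.

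The main obstacle is this final dimensional step---ruling out $\tilde{V}\subset E_S$, which would force $V$ to be only a curve rather than a surface; everything else is a routine application of generic smoothness once Proposition~\ref{prop:tcFtcVsmooth} is in hand.
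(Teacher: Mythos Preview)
Your argument for smoothness and pure dimension $2$ of $\tilde V$ is identical to the paper's: Proposition~\ref{prop:tcFtcVsmooth} gives $\tilde\cV$ smooth of dimension $57$, Corollary~\ref{cor:Vnonempty} makes $\mathrm{pr}_1$ dominant onto the $55$-dimensional base, and generic smoothness finishes.

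Where you diverge is in the last clause about $V$. The paper writes only ``Hence, also its image $V\subset F$ is closed and $2$-dimensional'' and does not justify the dimension claim at this point; you correctly observe that one must rule out $\tilde V\subset E_S$, since otherwise $\pi$ could collapse $\tilde V$ to a curve in $S$. Your class-comparison argument is a valid way to do this: the general fibre of $p$ is a general genus~$4$ curve, so $j$ does not map $X$ into $D_3$, hence the closure of $j^{-1}(D_3)$ is an effective divisor of class $126H_X$; if it were set-theoretically contained in the irreducible divisor $W$ of class $75H_X$ one would get $126=75m$ for some integer $m$, a contradiction. A point $x\in j^{-1}(D_3)\setminus W$ then yields a smooth $C_x$ with a triple ramification point, and Lemma~\ref{lem:cg}(2) together with Lemma~\ref{lem:sings Cx}(1) produce a triple line of first type, giving $\tilde V\not\subset E_S$.

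So your proof is correct and strictly more complete here than the paper's. The paper effectively postpones this verification: in Lemma~\ref{lem:tVandC} it shows via the equations \eqref{eq:Q01} that $\tilde V$ contains no fibre of $E_S\to S$, and in Theorem~\ref{thm:class of V} the computation $\pi_*[\tilde V]=21\mathrm{c}_2(\cU_F)\neq 0$ confirms $\dim V=2$ a posteriori. Your approach has the advantage of settling the matter at the right logical moment using only results already available.
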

\begin{proof}
From Proposition \ref{prop:tcFtcVsmooth} $\tilde{\cV}$ is of dimension $57$ and smooth. As from Corollary
\ref{cor:Vnonempty} the first projection $\mathrm{pr}_1:\tilde{\cV}\to|\OO_{\PP^5}(3)|$ is onto its image (a space of
dimension $55$), we obtain from generic smoothness that the generic fibre is closed, smooth and of pure dimension $2$. 
\end{proof}

\begin{theorem}\label{thm:tVconn}
    The surface $\tilde{V}$ is irreducible, hence also its image $V$ in $F$. 
\end{theorem}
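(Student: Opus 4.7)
My plan is to promote the irreducibility of the universal total space $\tilde{\cV}$ --- already established in Proposition \ref{prop:tcFtcVsmooth} as the projectivisation of a vector bundle over the irreducible smooth base $\cG_\Delta$ --- to irreducibility of the generic fibre $\tilde V = \mathrm{pr}_1^{-1}(X)$. Since $\tilde V$ is smooth of pure dimension $2$ by Lemma \ref{lem:Vtildesmooth}, irreducibility is equivalent to connectedness. Equivalently, if one forms the Stein factorisation $\tilde{\cV} \xrightarrow{g} Z \xrightarrow{h} |\OO_{\PP^5}(3)|^{\mathrm{sm}}$ of the proper surjective morphism $\mathrm{pr}_1$ (surjective by Corollary \ref{cor:Vnonempty}), then the number of connected components of the generic fibre equals $\deg h$, and the task reduces to proving $\deg h = 1$.

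\textbf{Key tool.} For this I would use upper semicontinuity of $h^0(\OO_{\tilde V(X)})$ in the family $\mathrm{pr}_1$, applied on the open locus where the morphism is flat (obtained by generic flatness). Since this quantity equals the number of connected components of $\tilde V(X)$ whenever the fibre is reduced --- in particular whenever it is smooth, as for $X$ general --- it suffices to exhibit a single smooth cubic $X_0$ for which $\tilde V(X_0)$ is connected. Upper semicontinuity then forces the generic fibre to have at most one component, hence (being non-empty of pure dimension $2$) exactly one.

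\textbf{Main obstacle.} The main difficulty is producing such an $X_0$. One natural approach is to take a highly symmetric example --- e.g., a Fermat-type cubic --- whose large automorphism group acts on $\tilde V(X_0)$, reducing connectedness to a combinatorial orbit calculation after unravelling the triple-line condition via the explicit local equations \eqref{eq:F triple_Y_1} from Proposition \ref{prop:tcFtcVsmooth}. A more intrinsic alternative that avoids specialisation is to identify $\tilde V(X) \subset \cG_\Delta$ as the zero scheme of the section determined by $X$ of the rank-$9$ globally generated quotient bundle $Q := (\HH^0(\PP^5, \OO(3)) \otimes \OO_{\cG_\Delta})/E$, where $E$ is the rank-$47$ subbundle with $\tilde{\cV} = \PP(E)$, and then to deduce $h^0(\OO_{\tilde V(X)}) = 1$ from the Koszul resolution
\[
0 \to \wedge^9 Q^\vee \to \wedge^8 Q^\vee \to \cdots \to Q^\vee \to \OO_{\cG_\Delta} \to \OO_{\tilde V(X)} \to 0
\]
together with suitable vanishing of $H^i(\cG_\Delta, \wedge^i Q^\vee)$ for $i \geq 1$. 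This second route hinges on establishing enough positivity of $Q$ on the $11$-dimensional smooth incidence variety $\cG_\Delta$, which is the technical crux, but it has the appeal of dovetailing with the vector-bundle description of $\tilde V$ advertised in the introduction.
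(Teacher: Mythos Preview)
Your reduction to connectedness via Lemma \ref{lem:Vtildesmooth} and your invocation of Stein factorisation match the paper's opening moves. The divergence is in how one shows $\deg h = 1$. The paper follows Barth--Van de Ven: since the target $M=\HH^0(\PP^5,\OO(3))$ is simply connected, a nontrivial finite normal cover must ramify, so the differential $d\mathrm{pr}_1$ would fail to surject along a divisor in $\tilde{\cV}$. The paper then computes this differential explicitly over a fixed fibre of $\mathrm{pr}_2$ (using the local description \eqref{eq:F triple_Y_1}), reducing surjectivity of $d\mathrm{pr}_1$ to full rank of a concrete $10\times 12$ matrix in the coefficients of the cubic, and verifies by a Macaulay2 check that this matrix has full rank along an entire $\PP^1$ in coefficient space, forcing the non-surjectivity locus to have codimension $\geq 2$---a contradiction.

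Both of your alternative routes are genuinely different in spirit but, as written, have real gaps. For route (a), the semicontinuity logic is correct \emph{provided} $X_0$ lies in the flat locus (equivalently, by miracle flatness since $\tilde{\cV}$ and $|\OO(3)|$ are smooth, provided $\dim\tilde V(X_0)=2$) and $\tilde V(X_0)$ is reduced. But you never actually produce $X_0$, and your candidate---the Fermat cubic---is a poor one: it contains $2$-planes, and for each $\Pi\subset X$ the defining equation $G|_\Pi = c\cdot\ell^3$ is satisfied with $c=0$ for \emph{every} $\ell\subset\Pi$, so $\tilde V(\text{Fermat})$ acquires a $\PP^2$-component for each such plane and is neither irreducible nor evidently reduced. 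For route (b), the Koszul setup is correct ($Q$ has rank $56-47=9$, is globally generated, and $\cG_\Delta$ is homogeneous, so Borel--Weil--Bott is in principle available), but the vanishing $\HH^i(\cG_\Delta,\wedge^i Q^\vee)=0$ for $1\le i\le 9$ is the entire content of the theorem and is nowhere established; carrying it out would be a substantial computation in its own right. The paper's differential argument trades these difficulties for an explicit rank check.
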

\begin{proof}
Note first that irreducibility will follow from Lemma \ref{lem:Vtildesmooth} if we can show connectedness.
We follow the strategy of \cite[Theorem 6]{barthvdv}, which we shall summarise before delving into the technical
details. We continue using the notation established in various proofs above.

Consider now again the projection \[{\mathrm pr}_1:\tilde{\cV}\to M=\HH^0(\PP^5, \OO_{\PP^5}(3)),\] 
which for a smooth cubic equation $G\in M$ so that $X=\VV(G)$, has fibre
$\tilde{V}$ the strict transform in $\tilde{F}$ of the surface $V$ of triple lines in $X$. The Stein
factorisation of $\pr_1$ is as follows
\[
\xymatrix{
    \tilde{\cV}\ar[dd]^{{\mathrm pr}_1}\ar[dr]^\rho & \\
    & W\ar[dl]^\sigma \\
    M &
}
\]
where $\rho$ has connected fibres, $\sigma$ is finite and $W$ is a normal variety. As the base $M$ is simply connected,
either $\sigma$ is a bijection in which case the theorem is proven, or there is a ramification divisor $D\subset M$ and
a branch divisor $D'\subset W$. As explained above, $\cG_{\Delta}$ is covered by opens isomorphic to
$U=\left\{\left(\ell(x,y), \Pi(x)\right) : (x,y)\in\CC^9\times\CC^2\right\}$, (i.e., \eqref{eq:U} restricted to
$\cG_\Delta$ where we have taken $y=y'$) over which the fibration $\tilde{\cV}$ is isomorphic to the one over $U$, that
is isomorphic to the projectivisation of the affine bundle $\tilde{\cV}^a_U$. We now restrict to the latter and consider
the analogue of the above diagram there.

If we denote by $\tilde{D}\subset\tilde{\cV}^a_U$ an irreducible component of the inverse
image of $D'$, then the differential \[d{\mathrm pr}_1:T_{ \tilde{\cV}^a_U}\to T_M\] cannot be surjective along points of
$\tilde{D}$. We will now arrive at a contradiction by arguing that the locus of points where the differential is not
surjective lies in codimension at least 2, thus ruling out this eventuality. We will prove this by arguing that the
non-surjective locus is of codimension at least two in every fibre of ${\mathrm pr}_2:\tilde{\cV}^a_U\to U\subset \cG_\Delta$, with
notation as in \eqref{eq:pr1tF}, \eqref{eq:cG}. As all fibres are isomorphic, it suffices to prove this over the point
$(\ell_0,\Pi_0)$ as in \eqref{eq:triple0}.

Recall that $\cA \cong \tilde{\cV}^a_U$ and let $\beta : \cA \to M$ the 
composition of ${\mathrm pr}_1$ with this isomorphism. 
The partial derivatives $\partial_{w_{k_0\cdots k_5}},
\partial_{x_{ij}}, \partial_{y_{t}}, \partial_{z}$ for $\sum k_\ell =3, i\in\{0,1,2\},j\in\{3,4,5\}$ and $t\in\{1,2\}$
generate the tangent space $T_{M\times U\times\CC}$, and an explicit computation shows that 
a vector 
\[(\{W_{k_0\cdots k_5}\}, \{X_{ij}\}, \{Y_t\}, Z)\in T_{M\times U\times\CC}\]
is in the subspace $T_\cA|_{{\mathrm pr}_2^{-1}(\ell_0,\Pi_0)}$   at the point $(G_b, c)$ if only if

\begin{align}\label{condit_tangent} 
\begin{split}
\sum_{j=3,4,5} {\partial G_b\over \partial x_j}(s_0,s_1,s_2,0,0,0)(s_0X_{0j}+s_1X_{1j}+s_2X_{2j})\\+
\sum_{k_0+k_1+k_2=3} W_{k_0k_1k_2000} s_0^{k_0}s_1^{k_1}s_2^{k_2}\\ = s_2^2(3c(Y_1s_0+Y_2s_1)-Zs_2).
\end{split}
\end{align}
The map $d \beta  $ sends $(\{W_{k_0\cdots k_5}\}, \{X_{ij}\}, \{Y_t\}, Z)\mapsto(\{W_{k_0\cdots k_5}\})\in T_{G_b}M$.

As Equation \eqref{condit_tangent} imposes no conditions on the $W_{k_0...k_5}$ when $k_0+k_1+k_2\leq 2$, the map
$d\beta $ is then onto at the point $(G_b, (\ell_0,\Pi_0), c)$ if and only if for any tuple
$(\{W_{k_0k_1k_2000}\}:k_0+k_1+k_2=3)$, there exists $(\{X_{ij}\},\{Y_t\}, Z)$ such that Equation \eqref{condit_tangent}
is satisfied.
This translates into the surjectivity of the following linear map 
\[
\phi: \left(\bigoplus_{j=3,4,5}(\CC^3)^*\right) \bigoplus \left(\CC^2\right)^*\bigoplus \CC^* \to \HH^0(\PP^2, \OO(3))  
\]
defined by
\[  
(l_3,l_4,l_5, l, l') \xlongrightarrow{\phi} \sum_{j=3,4,5} {\partial G_b\over \partial x_j}(s_0,s_1,s_2,0,0,0)\, l_j +
s_2^2(c l+l'),
\]
with coordinates $s_0,s_1,s_2$ for $(\CC^3)^*$, $s_0,s_1 $ for $(\CC^2)^*$ and $s_2$ for $\CC^*$.

Choosing the natural basis for the $12$-dimensional space on the left and $10$-dimensional space on the right
respectively, an easy but lengthy computation that we omit allows one to write down the $10\times 12$ matrix
corresponding to $\phi$ explicitly. We note that out of the 46 coefficients involved in defining a $G_b$ which has the pair
$(\ell_0,\Pi_0)$ as a triple line (i.e.,\ of the form \eqref{eq:F triple_Y_1}), only 19 of them arise in the matrix
corresponding to the map $\phi$, i.e.,\ $\phi$ is a $10\times12$ matrix $M$ with 19
variables involved in its entries. Namely for $A_{ijk}$ six $1\times3$ vectors of coefficients of $G_b$ (cf.\
\cite[p.100]{barthvdv}) and $c$, the matrix has the form
\begin{equation}\label{matrix}
M =
\left(
\begin{array}{cccccc|c}
A_{200} &         &         &   &   &   & s_0^3\\
A_{110} & A_{200} &         &   &   &   & s_0^2s_1   \\
A_{101} &         & A_{200} &   &   &   & s_0^2s_2 \\
A_{020} & A_{110} &         &   &   &   & s_0s_1^2\\
A_{011} & A_{101} & A_{110} &   &   &   & s_0s_1s_2 \\
A_{002} &         & A_{101} & c &   &   & s_0s_2^2\\
        & A_{020} &         &   &   &   & s_1^3\\
        & A_{011} & A_{020} &   &   &   & s_1^2s_2\\
        & A_{002} & A_{011} &   & c &   & s_1s_2^2\\
        &         & A_{002} &   &   & 1 & s_2^3
\end{array}
\right)
\end{equation}
where the final column corresponds to the basis of $\HH^0(\PP^2,\OO_{\PP^2}(3))$.

As being of maximal rank is invariant under multiplying by a scalar, to show that the locus where $\phi$ is not
surjective is in codimension at least $2$, it suffices to construct a $\PP^1\subset\PP\HH^0(\PP^2,\OO_{\PP^2}(3))$, over
every point of which $\phi$ is surjective - as if it were in codimension $\leq1$, then this $\PP^1$ would necessarily
meet this locus of non-surjectivity. In other words, to conclude we need to choose two linearly independent vectors in
$\CC^{19}$, i.e., entries for the $A_{ijk}$ and $c$ above, so that the plane they span corresponds to a 2-dimensional
subspace of the space of coefficients of cubic polynomials of the form \eqref{eq:F triple_Y_1}, at all of which the
matrix defined by $\phi$ has rank 10. To this aim, we let $\kappa,\lambda$ be coordinates of such a $\PP^1$ and consider
the following matrix (chosen essentially arbitrarily to be in the form \eqref{matrix})

\[
M(\kappa,\lambda) =
\left(\begin{array}{cccccccccccc}
\kappa & 0 & \lambda & 0 & 0 & 0       & 0 & 0 & 0       & 0 & 0 & 0 \\
0 & 0 & \kappa & \kappa & 0 & \lambda & 0 & 0 & 0       & 0 & 0 & 0 \\
\lambda & \lambda & \kappa &  0 & 0 & 0      & \kappa & 0 & \lambda & 0 & 0 & 0 \\
0 & \lambda & \kappa & 0 & 0 & \kappa & 0 & 0 & 0       & 0 & 0 & 0 \\
\lambda & \kappa & 0 & \lambda & \lambda & \kappa & 0 & 0 & \kappa & 0 & 0 & 0 \\
0 & 0 & 0 & 0 & 0 & 0       & \lambda & \lambda & \kappa & \kappa & 0 & 0 \\
0 & 0 & 0       &  0 & \lambda & \kappa& 0 & 0 & 0       & 0 & 0 & 0 \\
0 & 0 & 0       & \lambda & \kappa & 0 & 0 & \lambda & \kappa & 0 & 0 & 0 \\
0 & 0 & 0       & 0 & 0 & 0 & \lambda & \kappa & 0 & 0 & \kappa & 0 \\
0 & 0 & 0       &  0 & 0 & 0      & 0 & 0 & 0 & 0 & 0 & 1 
\end{array}\right).
\]
It is now not hard to check by a Gr\"obner basis computation using Macaulay2 that the $10\times10$ minors of the above
matrix have no common solution for $(\kappa,\lambda)\neq(0,0)$ which concludes the proof.
\end{proof}

\begin{remark}
In fact the proof of Theorem \ref{thm:tVconn} gives that the locus of triple lines in any cubic fourfold $X$ (i.e.,\ not
just smooth) is connected. If $X$ is general, then $V$ is irreducible even. 
\end{remark}

We recall that $\cU_k$ denotes the universal subbundle on $G(k,6)$. We denote by $\pi_i$, $i=1,2,3$, the projections of
$ \G(2,6)\times \G(2,6)\times\G(3,6) $ to the three factors and by $\pi_{13}$ the projection to the product of the first
and third factor. We recall the definitions of $\cG=\{(\ell, \ell',\Pi) : \ell', \ell\subset\Pi\} \subset \G(2,6)\times
\G(2,6)\times\G(3,6),\; \tilde{\cF} \subset |\cO_{\bP^5}(3)| \times \cG,\tilde{\cV}\subset \tilde{\cF}$ from
\eqref{eq:cG}, \eqref{eq:pr1tF}. We have the projections ${\rm pr}_1 : \tilde{\cF} \to |\cO_{\bP^5}(3)|$ and ${\rm
pr}_2: \tilde{\cF} \to \cG $. The fibre $\tilde{F}$ of $\tilde{\cF} $ under the first projection maps to $\cG$ under the
second projection. Following Equation \eqref{eq:pr1tF}, we explained that the fiber under ${\rm pr}_1$ over a general
point $X$ is the graph of the Voisin map, which we saw is isomorphic to the blow up $\tF$ of $F$ at $S$. From now
on, we denote either one of these by $\tF$.  We keep the notation $\pi_i$, $i=1,2,3$, for the composite maps from
$\tilde{F}$ to the three factors of the $\G(2,6)\times \G(2,6)\times\G(3,6)$. 

\begin{proposition}\label{prop:normalbundletV}
    The subvariety $\tV\subset\tF$ is the vanishing locus of a section of the rank 2 bundle
    \[B=\pi_2^*\cU_2^\vee \otimes \bigwedge^3\pi_3^*\cU_3\otimes \bigwedge^2\pi_1^*\cU_2^\vee.\]
    In particular, $N_{\tV/\tF}\cong B|_{\tV}$ so for $H=\pi^*H_F$ the pullback of the Pl\"ucker line bundle to
    $\tilde{F}$, we have an isomorphism of line bundles
    \begin{align*}
        K_{\tV} &= 3H_{\tV}.
    \end{align*}
\end{proposition}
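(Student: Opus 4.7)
The plan is to realise $\tV$ as the zero scheme of a natural section of $B$ on $\tilde{F}$ and then apply adjunction. On $\tilde{F}$, both $\pi_1^*\cU_2$ and $\pi_2^*\cU_2$ are rank $2$ subbundles of $\pi_3^*\cU_3$, coming from the inclusions $\ell \subset \Pi$ and $\ell' \subset \Pi$. I would take the composition
\[\sigma \colon \pi_2^*\cU_2 \hookrightarrow \pi_3^*\cU_3 \twoheadrightarrow \pi_3^*\cU_3/\pi_1^*\cU_2.\]
The target is a line bundle, and applying $\det$ to the defining short exact sequence identifies $\pi_3^*\cU_3/\pi_1^*\cU_2 \cong \bigwedge^3 \pi_3^*\cU_3 \otimes \bigwedge^2 \pi_1^*\cU_2^\vee$, so $\sigma$ is naturally a global section of $B$. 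At $(\ell,\ell',\Pi) \in \tilde{F}$ the section vanishes iff $\pi_2^*\cU_2|_p \subseteq \pi_1^*\cU_2|_p$, which by rank is equivalent to $\ell = \ell'$; hence $Z(\sigma) = \tV$ set-theoretically.

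By Lemma \ref{lem:Vtildesmooth}, $\tV$ is smooth of pure dimension $2$ in the smooth $4$-dimensional $\tilde{F}$, so of codimension equal to $\operatorname{rk} B = 2$. Therefore $\sigma$ is a regular section, $Z(\sigma) = \tV$ scheme-theoretically, and $N_{\tV/\tilde{F}} \cong B|_{\tV}$ by the standard identification for regular sections of vector bundles. The canonical class then follows by adjunction $K_{\tV} \cong (K_{\tilde{F}} \otimes \det B)|_{\tV}$. Since $F$ is hyperk\"ahler with $K_F \cong \cO_F$ and $S$ is smooth of codimension $2$, the blow-up formula yields $K_{\tilde{F}} \cong \cO_{\tilde{F}}(E_S)$. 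Applying $\det(E \otimes M) \cong \det E \otimes M^{\otimes \operatorname{rk} E}$ to the rank $2$ bundle $B$, and using $\pi_1^*\cU_2|_{\tV} \cong \pi_2^*\cU_2|_{\tV}$ (so that the two Pl\"ucker classes coincide with $H$), one obtains $c_1(\det B)|_{\tV} = 3H|_{\tV} - 2c_1(\pi_3^*\cU_3^\vee)|_{\tV}$, and hence $K_{\tV} = (E_S + 3H - 2c_1(\pi_3^*\cU_3^\vee))|_{\tV}$.

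The main obstacle is then the final identification $\cO_{\tilde{F}}(E_S)|_{\tV} \cong \pi_3^*(\det \cU_3^\vee)^{\otimes 2}|_{\tV}$ required to upgrade the resulting class equality to the stated isomorphism of line bundles $K_{\tV} \cong 3H_{\tV}$. I would establish this either by a direct computation in the explicit local trivialisations developed in the proof of Proposition \ref{prop:tcFtcVsmooth}, exploiting the $\PGL(5)$-equivariance of the construction, or by producing a second expression for $K_{\tilde{F}}$ via adjunction along the inclusion $\tilde{F} \hookrightarrow \PP(\cQ_F)|_F$ (where $\tilde{F}$ is cut out by the tangency condition on planes through $\ell$) and comparing with the blow-up formula.
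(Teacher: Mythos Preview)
Your construction of the section $\sigma$ and the adjunction step are exactly what the paper does; the paper just routes the section through the ambient incidence variety $\cG$ first (where Fulton's B.5.6 gives $Z(\sigma)=\cG_\Delta$ scheme-theoretically) and then intersects with $\tF$. This also patches the one soft spot in your argument: knowing only that $Z(\sigma)_{\mathrm{red}}=\tV$ is smooth of the expected codimension does not by itself force $Z(\sigma)=\tV$ as schemes, but since your $\sigma$ is the restriction of the section on $\cG$ cutting out $\cG_\Delta$ regularly, and $\tV=\tF\cap\cG_\Delta$ by definition, the scheme-theoretic equality follows immediately.

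The ``main obstacle'' you flag is resolved in the paper not by a local computation or a fresh adjunction, but by quoting two ready-made Picard identities on $\tF$ (stated as a lemma inside the proof, from \cite[Remark 2.2.19]{huybrechts} and \cite[Proposition 6]{amerik}):
\[
c_1(\pi_3^*\cU_3)=-3H+E_S,\qquad c_1(\pi_2^*\cU_2)=-7H+3E_S.
\]
Combining the second with your observation $\pi_1^*\cU_2|_{\tV}\cong\pi_2^*\cU_2|_{\tV}$ gives $-H=-7H+3E_S$ on $\tV$, i.e.\ $E_S|_{\tV}=2H|_{\tV}$; plugging this and the first identity into your expression $K_{\tV}=(E_S+3H-2c_1(\pi_3^*\cU_3^\vee))|_{\tV}$ yields $3H_{\tV}$ directly. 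The upgrade from a class equality to a line-bundle isomorphism is handled by noting $\Pic^0(\tF)=0$ (since $H^1(F,\cO_F)=0$). Your second proposed route, adjunction for $\tF\subset\PP(\cQ_F)$, would in effect re-derive the Huybrechts formula, so it works but is redundant once that reference is in hand.
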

\begin{proof}
We note that $\cG_\Delta$ is isomorphic to the locus $\mathcal{D}=\G(2,\cU_3)=\{(\ell,\Pi) : \ell\subset\Pi\}\subset
\G(2,6)\times\G(3,6)$. Let $\gamma : \G(2,6)\times\G(3,6) \to \G(2,6)$ and $\beta : \G(2,6)\times\G(3,6) \to
\G(3,6)$ the projections. The tautological sequence of the Grassmann bundle $\mathcal{D}=\G(2,\cU_3)$ is
\begin{equation}\label{eq:tautol_X}
0 \to \gamma^*\cU_2 \to \beta^*\cU_3 \to Q \to 0,
\end{equation} 
with $Q=\bigwedge^3\beta^*\cU_3 \otimes (\bigwedge^2 \gamma^*\cU_2)^{\vee}$ a line bundle. The variety $\cG$ is the
Grassmann bundle $h:\cG= \G(2,\beta^*\cU_3)\to \mathcal{D}$ and 
note that $h$ is the restriction of $\pi_{13}$ to $\cG$. The tautological sequence on $\cG$ is
\begin{equation}\label{eq:tautol_G}
0 \to \pi_2^*\cU_2 \to h^*\beta^* \cU_3=\pi_3^*\cU_3 \to Q_{\cG} \to 0,
\end{equation} 
with $Q_{\cG}= \bigwedge^3h^* \beta^*\cU_3 \otimes (\bigwedge^2 \pi_2^*\cU_2)^{\vee}$ a line bundle. 

The inclusion $\cG_\Delta\hookrightarrow\cG$ comes from a section $\delta$ to the projection $\pi_{13}$, with $\delta
(\ell, \Pi)=(\ell, \ell, \Pi)$. This section is determined (see \cite[B.5.7]{fulton}) by the rank two subbundle
$\delta^*\pi_2^*\cU_2$ of $\beta^*\cU_3$ on $\mathcal{D}$. Since $\pi_2 \delta =\gamma $ we have
$\delta^*\pi_2^*\cU_2=\gamma^*\cU_2$. Note then that $\delta^*Q_{\cG}=Q$. By \cite[B.5.8]{fulton} we have that the
relative tangent bundle of $h$ equals 
\[
T_h=\cU_{\cG}^{\vee} \otimes Q_{\cG}
\]
and then the relative normal bundle of $\cG_{\Delta}\cong \mathcal{D} $ in $\cG$ equals (see, \cite[B.7.3]{fulton})
\begin{equation}\label{eq:normalGD}
N_{\cG_\Delta/\cG} \cong \delta^* \pi_2^*\cU_2^{\vee} \otimes \delta^* Q_{\cG} \cong
\gamma^*\cU_2^\vee \otimes Q \cong \gamma^*\cU_2^\vee \otimes\bigwedge^3\beta^*\cU_3\otimes
(\bigwedge^2 \gamma^*\cU_2)^\vee.
\end{equation}
 
Since now the section $\delta$ is determined by the subbundle $\gamma^*\cU_2$ of $ \beta^*{\cU}_3$, we have as in
\cite[B.5.6]{fulton}, (adapted to the case of Grassmann bundles) that the composite of the inclusion $\pi_2^*\cU_2 \to
h^*\beta^*{\cU}_3$ with the pull back $h^* \beta^*{\cU}_3\to h^*Q$ of the canonical map $ \beta^*{\cU}_3 \to Q$ gives a
map $\pi_2^*\cU_2 \to h^*Q= \bigwedge^3\pi_3^*\cU_3\otimes(\bigwedge^2\pi_1^*\cU_2)^\vee $, which corresponds to a
section of the rank 2 bundle $\pi_2^*\cU_2^{\vee} \otimes h^*Q$. The zero locus of this section is the image
$\cG_{\Delta}$ of the section $\delta$. 

From \cite[\href{https://stacks.math.columbia.edu/tag/0473}{Tag 0473}]{stacks-project} we have that
$N_{\tilde{\cV}/\tilde{\cF}}=\pr_2^*N_{\cG_\Delta/\cG}$. Restricting to a fibre $\tF\subset\tilde{\cF}$ over a general
smooth $X\in|\OO(3)|$, we have a surjection $N_{\tilde{\cV}/\tilde{\cF}}^\vee|_{\tV}\to N_{\tilde{V}/\tilde{F}}^\vee$,
and as both are locally free of rank two (the latter from Lemma \ref{lem:Vtildesmooth} and Theorem \ref{thm:tVconn}),
they must be isomorphic so we compute $N_{\tV/\tF}$ as in the statement. We now use the following result,
noting that the second equality, although proven numerically, also holds at the level of line bundles since
$\Pic^0(\tF)=0$ (as $\HH^1(F, \OO_F)=0$).
\begin{lemma}(\cite[Remark 2.2.19]{huybrechts}, \cite[Proposition 6]{amerik})\label{lem:huyamer}
We have the following equalities of line bundles on $\tF$
\[
\begin{aligned}
\pi_3^*\rm{c}_1(\cU_3)&=-3H+E_S, \\
\pi_2^*\rm{c}_1(\cU_2)&=-7H+3E_S. 
\end{aligned}
\]
\end{lemma}
On the other hand, since $\pi_1=\pi_2$ when we restrict to $\tilde{V}$, we have that
$\pi_1^*(\cU_2)=\pi_2^*(\cU_2)$ on $\tilde{V}$ hence $-H=-7H+3E_S$, i.e., 
\begin{align}\label{eq:HES}
E_S=2H \text{ on }\tilde{V}.
\end{align}
By plugging the above in the formula for $\det B$ we get $\wedge^2 N_{\tV/\tF} = (3H-E_S)_{\tV}=H_{\tV}$.  Finally, the
isomorphism $K_{\tV}=3H_{\tV}$ follows from the fact that $K_{\tF}=E_S$ as $\tF$ is the blowup of $F$ at $S$.
\end{proof}

\begin{theorem}\label{thm:class of V}
     The class of $V$ in $\CH^2(F)$ is given by
    \[[V]=21\mathrm{c}_2(\cU_F).\]
\end{theorem}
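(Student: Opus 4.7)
The plan is to compute $\pi_*[\tV]$, where $\pi \colon \tF \to F$ is the blowup along $S$. Since $\tV$ is the strict transform of $V$ and is irreducible of dimension $2$ by Lemma \ref{lem:Vtildesmooth} and Theorem \ref{thm:tVconn}, the induced map $\tV \to V$ is birational, so $[V] = \pi_*[\tV]$.

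\textbf{Step 1 (reduce $[\tV]$ to a Chern class).} By Proposition \ref{prop:normalbundletV}, $\tV \sse \tF$ is the zero scheme of a regular section of the rank-two bundle $B = \pi_2^*\cU_2^\vee \otimes L$ with $L = \det \pi_3^*\cU_3 \otimes \det \pi_1^*\cU_2^\vee$. Hence $[\tV] = c_2(B)$ in $\CH^2(\tF)$. Expanding by the tensor formula for a rank-two bundle twisted by a line bundle,
\[
c_2(B) = c_2(\pi_2^*\cU_2) + c_1(\pi_2^*\cU_2^\vee)\,c_1(L) + c_1(L)^2,
\]
and applying Lemma \ref{lem:huyamer} together with $c_1(\pi_1^*\cU_2^\vee) = H$ (for $H = \pi^*H_F$) gives $c_1(L) = -2H + E_S$ and $c_1(\pi_2^*\cU_2^\vee) = 7H - 3E_S$, so the problem reduces to computing $c_2(\pi_2^*\cU_2)$.

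\textbf{Step 2 (compute $c_2(\pi_2^*\cU_2)$).} On $\cG$, and hence on $\tF$, the containments $\ell, \ell' \sse \Pi$ provide two tautological short exact sequences
\[
0 \to \pi_1^*\cU_2 \to \pi_3^*\cU_3 \to Q' \to 0, \qquad 0 \to \pi_2^*\cU_2 \to \pi_3^*\cU_3 \to Q_\cG \to 0,
\]
whose quotients are line bundles with first Chern classes already determined by Lemma \ref{lem:huyamer}. The Whitney formula applied to the first SES expresses $c_2(\pi_3^*\cU_3)$ in terms of $c_2(\pi_1^*\cU_2) = \pi^*c_2(\cU_F)$ and powers of $H, E_S$; applied to the second SES it then recovers $c_2(\pi_2^*\cU_2)$. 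Substituting back into $c_2(B)$ yields
\[
[\tV] = \pi^*c_2(\cU_F) + \alpha\,H^2 + \beta\,HE_S + \gamma\,E_S^2
\]
for explicit integers $\alpha, \beta, \gamma$.

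\textbf{Step 3 (push forward).} Applying $\pi_*$ with the standard blowup identities $\pi_*\pi^* = \id$, $\pi_*(HE_S) = H_F \cdot \pi_*(E_S) = 0$, and $\pi_*(E_S^2) = -[S]$, together with $[S] = 5\sigma_2|_F$ from Theorem \ref{thm:amerik and osy}, gives $[V]$ as a $\ZZ$-linear combination of $c_2(\cU_F)$, $H_F^2$ and $\sigma_2|_F$. Using the identity $\sigma_2|_F = H_F^2 - c_2(\cU_F)$, which follows from $c(\cU_F^\vee)\,c(\cQ_F) = 1$, the $H_F^2$ contributions cancel against the $\sigma_2|_F$ contributions, leaving $[V] = 21\,c_2(\cU_F)$.

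There is no deep obstacle: the argument is Chern-class bookkeeping. The organisational insight that makes it clean is to avoid computing $c_2(\pi_2^*\cU_2) = \tilde{\phi}^*c_2(\cU_F)$ directly through the resolved Voisin map (which mixes two different morphisms $\tF \to F$), and instead to exploit that $\pi_1^*\cU_2$ and $\pi_2^*\cU_2$ both embed as rank-two subbundles of $\pi_3^*\cU_3$ on $\cG$, so that Whitney on the two associated SESs relates everything to the single pullback $\pi^*c_2(\cU_F)$.
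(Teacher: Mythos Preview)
Your proposal is correct and follows essentially the same route as the paper: express $[\tV]=c_2(B)$ via Proposition~\ref{prop:normalbundletV}, expand using the tensor formula and Lemma~\ref{lem:huyamer}, pass through $c_2(\pi_3^*\cU_3)$ via the two tautological sequences \eqref{eq:tautol_X} and \eqref{eq:tautol_G} to rewrite $c_2(\pi_2^*\cU_2)$ in terms of $\pi^*c_2(\cU_F)$, and push forward using the blowup identities together with $[S]=5(H_F^2-c_2(\cU_F))$. The only cosmetic difference is that the paper carries out the arithmetic explicitly, arriving at $[\tV]=\pi^*c_2(\cU_F)+20H^2-18HE_S+4E_S^2$ before pushing forward, whereas you leave the coefficients $\alpha,\beta,\gamma$ symbolic; your organisational remark about avoiding a direct computation of $\tilde{\phi}^*c_2(\cU_F)$ is exactly the point of the paper's use of the two SESs.
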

\begin{proof} 
    We have that ${\mathrm c}_1(\pi_1^* \cU_2)=-H$. Recall that $ \bigwedge^3\pi_3^*\cU_3\otimes
    (\bigwedge^2\pi_1^*\cU_2)^\vee=h^*Q$. In the following we repeatedly use the formulas of Lemma \ref{lem:huyamer}. We
    have $\mathrm{c}_1(h^*Q)=\mathrm{c}_1(\pi_3^* \cU_3)-\mathrm{c}_1(\pi_1^*\cU_2)=-2H+E_S$ so that
    \begin{align*}
     [\tilde{V}] &=\mathrm{c}_2(\pi_2^*\cU_2^\vee \otimes h^*Q)=\mathrm{c}_2(\pi_2^*\cU_2)-\mathrm{c}_1(\pi_2^*\cU_2)\mathrm{c}_1(h^*Q)+\mathrm{c}_1^2(h^*Q)\\
    &=\mathrm{c}_2(\pi_2^*\cU_2)+(7H-3E_S)(-2H+E_S)+(-2H+E_S)^2\\
    &=\mathrm{c}_2(\pi_2^*\cU_2)-10H^2+9HE_S-2E_S^2.
    \end{align*}
    From \eqref{eq:tautol_G} we get $\mathrm{c}_2(\pi_2^*\cU_2)= \mathrm{c}_2(\pi_3^*
    \cU_3)-\mathrm{c}_1(\pi_2^*\cU_2)\mathrm{c}_1(Q_{\cG})$, with $\mathrm{c}_1(Q_{\cG})= \mathrm{c}_1(\pi_3^*
    \cU_3)-\mathrm{c}_1(\pi_2^*\cU_2)=4H-2E_S$, hence 
    \[
    \mathrm{c}_2(\pi_2^*\cU_2)= \mathrm{c}_2(\pi_3^* \cU_3)+ 28H^2-26HE_S+6E_S^2.
    \]
    From \eqref{eq:tautol_X} we get that $\mathrm{c}_2(\pi_3^* \cU_3) = \mathrm{c}_2(\pi_1^*\cU_2) +
    \mathrm{c}_1(\pi_1^*\cU_2)\mathrm{c}_1(h^*Q)$, with $\mathrm{c}_1(h^*Q)=\mathrm{c}_1(\pi_3^*
    \cU_3)-\mathrm{c}_1(\pi_1^*\cU_2)=-2H+E_S$, hence $\mathrm{c}_2(\pi_3^* \cU_3) =\mathrm{c}_2(\pi_1^*\cU_2)+2H^2-HE$.
    Therefore 
    \[
    \mathrm{c}_2(\pi_2^*\cU_2)=\mathrm{c}_2(\pi_1^*\cU_2)+30H^2-27HE_S+6E_S^2
    \]
    and then
    \begin{equation} \label{eq:classtV}
     [\tilde{V}]= \mathrm{c}_2(\pi_1^*\cU_2) +20H^2+4E_S^2-18HE_S
    \end{equation}
    from which we may now conclude, by pushing forward via $\pi$ and using Theorem \ref{thm:amerik and osy}
    \begin{align*}
    [V]&=\pi_*[\tilde{V}] \\
    &=20H_F^2-4[S]+\mathrm{c}_2(\cU_F) \\
    &=21\mathrm{c}_2(\cU_F).\qedhere
    \end{align*}
\end{proof}

To summarise the above, we conclude the following.

\begin{corollary}\label{cor:classV}
    The locus $V\subset F$ of triple lines is an projective irreducible surface of class $21\mathrm{c}_2(\cU_F)$. 
\end{corollary}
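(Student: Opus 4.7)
The plan is to assemble the pieces already established in this section. First, I would verify the topological and dimension-theoretic claims. Lemma \ref{lem:Vtildesmooth} gives that $\tV$ is closed in $\tF$, smooth, and of pure dimension $2$. Since $\pi:\tF\to F$ is projective (as the blowup of a projective variety), the image $V=\pi(\tV)$ is closed in $F$, hence projective. Irreducibility of $V$ is inherited from irreducibility of $\tV$ established in Theorem \ref{thm:tVconn}. Because $\pi$ is birational, the image $V=\pi(\tV)$ is $2$-dimensional.

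The class $[V]=21\mathrm{c}_2(\cU_F)\in\CH^2(F)$ is precisely the conclusion of Theorem \ref{thm:class of V}.

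The remaining point is that $V$ is of general type. My approach would be to exhibit $\pi|_{\tV}:\tV\to V$ as a resolution of singularities and to verify that $K_{\tV}$ is big. First, I would argue that $\pi|_{\tV}$ is birational: the map $\pi$ is an isomorphism away from the exceptional divisor $E_S$, so $\pi|_{\tV}$ is an isomorphism over $V\setminus S$, and birationality amounts to checking $V\not\subset S$. This is clear since the generic triple line is of first type (as already anticipated in the introduction, $V$ contains lines of both types), so $\tV\to V$ is a birational morphism from a smooth surface. Proposition \ref{prop:normalbundletV} then gives the isomorphism $K_{\tV}\cong 3H_{\tV}$, where $H=\pi^*H_F$. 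Since $H_F$ is ample on $F$ and $\tV\to V\subset F$ is generically finite onto a $2$-dimensional subvariety, the pullback $H_{\tV}$ is big on $\tV$, so $K_{\tV}=3H_{\tV}$ is big. Hence $\tV$ is a smooth projective surface of general type, and since it is birational to $V$, the same holds for $V$.

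The only genuine subtlety, as opposed to routine bookkeeping, is the birationality of $\pi|_{\tV}$; this is the step that requires explicitly knowing that the generic triple line is not of second type. Everything else is a direct transcription of Lemma \ref{lem:Vtildesmooth}, Theorem \ref{thm:tVconn}, Proposition \ref{prop:normalbundletV}, and Theorem \ref{thm:class of V}.
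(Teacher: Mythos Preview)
Your argument is correct and follows the paper's intended reasoning: the corollary is presented there simply as a summary of Lemma \ref{lem:Vtildesmooth}, Theorem \ref{thm:tVconn}, Proposition \ref{prop:normalbundletV}, and Theorem \ref{thm:class of V}, with no separate proof.

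The one point you correctly flag --- that $\pi|_{\tV}:\tV\to V$ is birational, equivalently $V\not\subset S$ --- is not actually justified by your appeal to the introduction, which is a forward reference rather than a proof. You can close this cleanly using what is already in hand: the pushforward $\pi_*[\tV]=21\mathrm{c}_2(\cU_F)$ computed in Theorem \ref{thm:class of V} is not a multiple of $[S]=5(H_F^2-\mathrm{c}_2(\cU_F))$, since $H_F^2$ and $\mathrm{c}_2(\cU_F)$ are linearly independent in $\HH^4(F,\QQ)$ by the intersection numbers of Lemma \ref{lem:intersection theory}. Were $\tV\subset E_S$, the pushforward $\pi_*[\tV]$ would be supported on $S$ and hence a multiple of $[S]$, a contradiction. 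Thus $V\not\subset S$, the map $\pi|_{\tV}$ is birational, and your general-type argument via $K_{\tV}=3H_{\tV}$ goes through as written.
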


In order to analyse singularities of $V$ we will now need to study the intersections 
\begin{align*}
C&=S\cap V\subset F,\\
\tilde{C}&=E_S\cap\tilde{V}\subset\tilde{F}. 
\end{align*}

\begin{corollary}\label{cor:Cnonempty}
    We have $C\neq\emptyset$.
\end{corollary}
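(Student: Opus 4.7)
The plan is to prove non-emptiness of $C=S\cap V$ by a cohomological intersection count. Since $S$ and $V$ are closed subvarieties of the smooth projective fourfold $F$, each of codimension two, if they were disjoint then the cup product of their fundamental classes would vanish in $\CH^4(F)\cong\ZZ$. It therefore suffices to verify that $[S]\cdot[V]\neq 0$.

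With $[V]=21\mathrm{c}_2(\cU_F)$ from Theorem \ref{thm:class of V} and $[S]=5\sigma_2|_F$ from Theorem \ref{thm:amerik and osy} in hand, I would reduce the intersection to a polynomial expression in $H_F$ and $\mathrm{c}_2(\cU_F)$. Using the tautological sequence $0\to\cU\to\cO^6\to\cQ\to0$ on the Grassmannian together with the rank-two identity $\mathrm{c}_2(\cU^\vee)=\mathrm{c}_2(\cU)$, one has $\sigma_2|_F=H_F^2-\mathrm{c}_2(\cU_F)$. Combining with the values $H_F^2\cdot\mathrm{c}_2(\cU_F)=45$ and $\mathrm{c}_2(\cU_F)^2=27$ from Lemma \ref{lem:intersection theory}, the count becomes
\[
[S]\cdot[V]=105\bigl(H_F^2\cdot\mathrm{c}_2(\cU_F)-\mathrm{c}_2(\cU_F)^2\bigr)=105(45-27)=1890,
\]
which is non-zero, so $C\neq\emptyset$.

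There is no genuine obstacle here: once the class of $V$ has been established, the corollary drops out of a one-line numerical check. The substantive content lies upstream in Theorem \ref{thm:class of V} (which was just proven) and downstream, where the intersection number $1890$ will be refined via excess-intersection and singularity analysis into the assertion of Theorem A that $C$ is an irreducible nodal curve of class $\tfrac{35}{2}H_F^3$, and ultimately into the enumerative count of $3780$ nodes of Theorem B.
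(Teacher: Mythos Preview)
Your argument is correct and essentially identical to the paper's own proof: both compute $[S]\cdot[V]=1890$ using Theorem~\ref{thm:class of V}, Theorem~\ref{thm:amerik and osy}, and the numerical values of Lemma~\ref{lem:intersection theory}, and conclude non-emptiness from this. One small inaccuracy: $\CH^4(F)$ is not isomorphic to $\ZZ$ (the group $\CH_0$ of a hyperk\"ahler fourfold is enormous), but the argument only needs that the degree map $\CH^4(F)\to\ZZ$ sends the class of a disjoint intersection to zero, which is what your computation actually verifies.
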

\begin{proof}
  Theorems \ref{thm:class of V}, \ref{thm:amerik
    and osy} and Lemma \ref{lem:intersection theory} give  
    \[[S][V] = 5(H_F^2-\mathrm{c}_2(\cU_F^{\vee})) \; 21\mathrm{c}_2(\cU_F) = 1890,\]
    so that in particular $S\cap V$ cannot be empty.
\end{proof}

\begin{proposition}\label{prop:Ctildesmooth}
    The variety $\tilde{V}$ intersects $E_S$ transversely at a smooth pure 1-dimensional variety $\tilde{C}$. 
\end{proposition}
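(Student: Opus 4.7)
The strategy is to establish transversality of $\tV$ and $E_S$ in $\tF$; smoothness and pure 1-dimensionality of $\tC$ will then follow. Since $\tV$ is smooth of dimension $2$ (Lemma \ref{lem:Vtildesmooth}, Theorem \ref{thm:tVconn}) and $E_S$ is a smooth Cartier divisor on the smooth fourfold $\tF$, it is enough to produce, at every $p\in\tC$, a tangent direction in $T_p\tV$ not lying in $T_pE_S$, or equivalently to verify that the scheme-theoretic intersection $\tC=\tV\cap E_S$ is smooth of the expected codimension $1$ in $\tV$.

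First, one rules out the possibility $\tV\subseteq E_S$: the generic triple line is of first type. Indeed, combining Corollary \ref{cor:Vnonempty} with Lemma \ref{lem:D3} and the remark following Theorem \ref{thm:class of V}, the morphism $q^{-1}(V)\to p(q^{-1}(V))$ is birational, so the generic point of $V$ lies over a general $x\in X$ for which $C_x$ is smooth (i.e.,\ $x\notin W$), and hence the corresponding line is of first type by Lemma \ref{lem:sings Cx}. Therefore $\tV\not\subseteq E_S$ and $\tC$ has pure dimension $1$. Non-emptiness is Corollary \ref{cor:Cnonempty}.

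To verify smoothness/transversality, the plan is to repeat the universal family argument of Theorem \ref{thm:tVconn} one dimension lower. Let $\cE_\cS\subset\tilde{\cF}$ denote the universal exceptional divisor lying over the smooth universal locus $\cS$ of second type lines, and set $\tilde{\cC}:=\tilde{\cV}\cap\cE_\cS$. Proposition \ref{prop:tcFtcVsmooth} gives that $\tilde{\cV}$ is smooth, so once we show that $\tilde{\cC}$ is smooth of codimension $1$ in $\tilde{\cV}$, generic smoothness of the projection $\mathrm{pr}_1:\tilde{\cC}\to M=|\OO_{\PP^5}(3)|$ yields smoothness of the fibre $\tC$ over a general $X$, from which transversality follows by the dimension count above.

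For smoothness of $\tilde{\cC}$, one would adapt the local tangent-space calculation \eqref{condit_tangent}--\eqref{matrix}. On the neighborhood $U$ of \eqref{eq:U}, intersected with the universal second-type locus inside $\cG_\Delta$, the triple-line condition combined with the second-type condition can be written explicitly using the normal form \eqref{eqtype2} in place of \eqref{eqtype1}: one gets an affine model for $\tilde{\cC}^a_U$ whose tangent space at a distinguished cubic $G_b$ of the form \eqref{eq:F triple_Y_1} (further constrained by \eqref{eqtype2}) corresponds to the kernel of a linear map to $\HH^0(\PP^2,\OO(3))$ of the same shape as $\phi$ in the proof of Theorem \ref{thm:tVconn}. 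The main obstacle is setting up this refined rank-$10$ matrix correctly for the second-type normal form; once written down, surjectivity on a suitable $\PP^1$ of cubics (and hence smoothness of $\tilde{\cC}$ outside codimension $2$ in each fibre of $\mathrm{pr}_2$) should be checkable by the same Gr\"obner basis strategy in Macaulay2 used at the end of the proof of Theorem \ref{thm:tVconn}.
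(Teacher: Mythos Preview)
Your high-level strategy---pass to the universal intersection $\tilde{\cC}=\tilde{\cV}\cap\cE$, show it is smooth over smooth cubics, then invoke generic smoothness of $\mathrm{pr}_1$---is exactly the paper's. But the implementation you propose misreads what the rank-$10$ matrix $\phi$ of Theorem~\ref{thm:tVconn} actually computes. That matrix encodes surjectivity of the differential $d\mathrm{pr}_1:T\tilde{\cV}^a_U\to T_M$, which was used there to exclude a ramification \emph{divisor} in the Stein factorisation (a connectedness argument); it does not detect smoothness of the total space. Adapting it with the second-type constraint and checking ``surjectivity on a suitable $\PP^1$'' would at best tell you where $d\mathrm{pr}_1|_{\tilde{\cC}}$ is a submersion, not that $\tilde{\cC}$ itself is smooth---so the passage ``hence smoothness of $\tilde{\cC}$ outside codimension $2$'' is a non sequitur.

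The paper avoids any tangent-space computation by recognising the second-type condition as determinantal. For a cubic $G$ of the form \eqref{eq:F triple_Y_1}, the line $\ell_0$ is of second type precisely when the three binary quadrics $\partial_iG|_{\ell_0}$, namely $Q_3(x_0,x_1,0,0)$, $Q_4(x_0,x_1,0,0,0)$, $Q_5(x_0,x_1,0,0,0,0)$, are linearly dependent---a single $3\times 3$ determinant in their coefficients. Hence each fibre of $\tilde{\cV}\cap\cE\to\cG_\Delta$ is a determinantal cubic hypersurface in the $\PP^{46}$ fibre of $\tilde{\cV}$, singular only along the codimension-$4$ rank-$\leq 1$ locus $M_1(3,3)$; on that locus the fourfold $\VV(G)$ is itself singular at a point of $\ell_0$. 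Restricting to $|\OO(3)|_{\mathrm{sm}}$ therefore already gives a smooth universal $\tilde{\cV}\cap\cE_{\mathrm{sm}}$, and generic smoothness finishes. No analogue of the $\PP^1$/Gr\"obner step is needed. Your preliminary step ruling out $\tV\subseteq E_S$ is correct but superfluous once this universal dimension count is in place.
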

\begin{proof}
Denote by $\cE\subset\tilde{\cF}$ the inverse image of the universal type two locus $\cS\subset |\OO_{\PP^5}(3)|\times
\G(2,6)$ defined at the beginning of this section.  The fibre ${\mathrm pr}_1^{-1}(X)$ restricted to $\cE$ is $E_{S_X}$
the exceptional divisor of the blow up, whose fibres over the type two locus $S_X$ of $X$ parametrise 2-planes tangent
along a line in $S_X$. We show now that all fibres of $\mathrm{pr}_2: \cE \to \cG$ are isomorphic. Indeed, a projective
transformation takes a second type line of a smooth cubic to a second type line since for a smooth cubic $X=\VV(G)$,
$\ell$ being of second type is equivalent to 
\[
\langle \partial_iG|_\ell\rangle_{i=0,\ldots,5} \subset\HH^0(\ell, \OO(2))
\]
being 2-dimensional (instead of 3-dimensional which is the generic case). Keeping the notation established in the
beginning of this sections and  restricting at the fibre over $(x=0, y=0, y')$, i.e. $\Pi=\Pi_0$, $\ell=\ell_0$ as in
\eqref{eq:triple0},  then using the equation of  $G$ from \eqref{eq:F triple_Y_1}, we see that
\begin{align*}
\langle \partial_iG|_{\ell_0}\rangle_{i=0,\ldots,5} = \langle &0,0,0, Q_3(x_0,x_1,0,0), \\&Q_4(x_0,x_1,0,0,0),
Q_5(x_0,x_1,0,0,0,0)\rangle.
\end{align*}
Since three quadrics in two variables are linearly dependent if and only if the determinant of the corresponding
$3\times3$ matrix vanishes, we see that this imposes one extra condition. We also conclude that all the fibres of the
map ${\mathrm pr}_2:\cE\to\mathcal{G}$ are isomorphic to a determinant cubic hypersurface (inside the fibre
$\PP^{46}$ of the second projection of $\tilde{\cF}$), which is smooth outside the codimension 4 locus of the generic
determinantal variety $M_1(3,3)$. This singular locus maps under the first projection to the locus of singular cubics
since in this case the cubic is singular at a point of the line. The degeneracy locus $M_1(3,3)$ can be constructed
universally and as it maps into the locus of singular cubics in $|\OO_{\PP^5}(3)|$ (cf.\ the proof of \cite[Proposition
2.2.13]{huybrechts}), we obtain also that
$\cE_{\mathrm{sm}}={\mathrm pr}_1^{-1}(|\OO_{\PP^5}(3)|_{\mathrm{sm}})\subset\cE$ is smooth.

The intersection $\tilde{\cV} \cap\cE $ is simply the restriction of $\cE$ to the locus $\cG_\Delta$ under $\mathrm{pr}_2$.
Restricting to the locus of smooth cubics, we have a diagram
\[
\xymatrix{
    \tilde{\cV} \cap\cE_{\mathrm{sm}}\ar[r]^-{\mathrm{pr}_1}\ar[d]^{\mathrm{pr}_2} & 
    |\OO_{\PP^5}(3)|_{\mathrm sm} \\
    \cG_\Delta &
}
\]
where any fibre over $\mathcal{G}_\Delta$ is the fixed determinantal cubic hypersurface in $\PP^{46}$ from above. The
restricted map ${\mathrm pr}_1$ is onto from Corollary \ref{cor:Cnonempty}. In particular, as the base $\cG_\Delta$ is
smooth and the intersection above is just the restriction to $\cG_\Delta$, so also the total space $\tilde{\cV}
\cap\cE_{\mathrm{sm}}$ is smooth. By generic smoothness, also
$\mathrm{pr}_1^{-1}(\VV(G))$ is smooth for generic $\VV(G)\in |\OO_{\PP^5}(3)|_{\mathrm{sm}}$ and must have
pure 1-dimensional fibres. This implies that $\tilde{V}$ meets $E_S$ transversely for a fixed general cubic $\VV(G)$. 
\end{proof}

We note that the above proof does not conclude that $\tC$ is irreducible, yet this will be proven later in Lemma
\ref{lem:tVandC}.

We consider now the $12$-dimensional family 
\begin{align}\label{eq:cH}
{\mathcal H}=\{(\ell, H) \text{ with } \ell \subset H\}\subset \G(2,6)\times \G(4,6),
\end{align}
which is a $\G(2,4)$-fibration over $\G(2,6)$. Since any two such pairs differ by a projective transformation, the
family of smooth cubic fourfolds that contain $\ell$ as a line of second type with corresponding 3-plane $H$ are all
isomorphic as we vary $\ell$ and $H$. We may then assume $\ell_0, \ H_0$ are as in \eqref{eq:triple0}. The smooth cubics
in $\bP^5$ which contain $\ell_0$ as a line of second type and whose corresponding tangent 3-plane is $H_0$ form an open
set inside the family of cubics given by the following equation
\begin{align}\label{eqtype2full}
\begin{split}
F=&\ (c_0x_4+d_0x_5)x_0^2+(c_{01}x_4+d_{01}x_5)x_0x_1+(c_1x_4+d_1x_5)x_1^2+\\
& x_0Q_0(x_2,x_3,x_4,x_5)+x_1Q_1(x_2,x_3,x_4,x_5)+ P(x_2,x_3,x_4,x_5),
\end{split}
\end{align}
with $Q_0,Q_1$ quadrics and $P$ a cubic (cf.\ Equation \eqref{eqtype2} where 
one member of the above family was chosen). The above family has dimension $45$ and thus the variety
\begin{align*}
\left\{({\VV}(G),(\ell, H)):\ell \subset {\VV}(G) \mbox{ has tangent 3-plane } H \right\} 
\subset |{\mathcal O}_{\bP^5}(3)|_{\text{sm}} \times {\mathcal H} 
\end{align*}
is irreducible of dimension $57$ and smooth - note here that this variety is isomorphic to $\cS$ introduced above since
every type two line has a unique triple tangent hyperplane.

The pencil of 2-planes $\Pi_{\lambda, \mu}$ inside $H_0\cong \PP^3$ (with coordinates $x_0,x_1,x_2,x_3$) which contain
$\ell_0$ is given by $\mu x_2 - \lambda x_3=0$. The cubic surface ${\VV}(G)\cap H_0$ is given by the equation 
\[ x_0Q_0(x_2,x_3)+x_1Q_1(x_2,x_3)+P(x_2,x_3)=0, \] 
where $Q_0(x_2,x_3)=Q_0(x_2,x_3,0,0)$ etc.\ The intersection of
$\Pi_{\lambda, \mu}$ with ${\VV}(G)$ is then given by the equation (without loss of generality we assume that $\lambda\neq0$ in
the following)
\begin{align*}
x_0 x_2^2Q_0(\lambda, \mu)+&x_1x_2^2Q_1(\lambda, \mu)+x_2^3\lambda^{-1}P
(\lambda, \mu)\\&=x_2^2\, [x_0 Q_0(\lambda, \mu)+x_1Q_1(\lambda, \mu)+
x_2\lambda^{-1}P(\lambda,\mu)]=0,
\end{align*}
and the solution $x_2^2=0$ is the double line $\ell_0$ whereas $x_0 Q_0(\lambda, \mu)+x_1Q_1(\lambda, \mu)+
x_2\lambda^{-1}P(\lambda,\mu)=0$ is the residual line $\ell_{\lambda, \mu}$ (the equations take place in the plane
$\Pi_{\lambda, \mu}$). Recall that when $X={\VV}(G)$ is  general then it does not contain a 2-plane. Hence, in this
case, for any $[\lambda, \mu]$ the last equation  is not the null equation, otherwise  $\Pi_{\lambda, \mu} \subset X$.
The intersection point of the residual line with $\ell_0$ is given by 

\begin{align}\label{eq:Q01}
x_0 Q_0(\lambda, \mu)+ x_1Q_1(\lambda, \mu)=0.
\end{align} 
Finally, $Q_0, Q_1$ have a common root $[\lambda_0, \mu_0]$ if and only if there exists a 2-plane, namely
$\Pi_{\lambda_0,\mu_0}$, which intersects $X$ at $x_2^3$, in which case $\ell_0\in S\cap V$. By a dimension count, none
of them is identically zero and they cannot have a common double root as $X$ is general.

We note the following corollary of the above discussion, which will be used in the next section.

\begin{corollary}\label{cor:2to1}
    For $s\in S\setminus(S\cap V)$, the restriction of the map \[\psi_{E_S}:E_S\dashrightarrow W\] to
    $E_{S,s}\cong\PP^1$ is a two-to-one morphism. 
\end{corollary}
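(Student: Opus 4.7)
The plan is to reduce the statement to the explicit description of the residual-line map worked out in the paragraphs immediately preceding the corollary. The first step is to identify $E_{S,s}$: by Lemma \ref{lem:huybrechtsincidence} the fibre over $s=[\ell]$ parametrises 2-planes tangent to $X$ along $\ell$, and when $\ell$ is of second type these sit as a pencil $\Pi_{\lambda,\mu}$ inside the unique tangent 3-plane $H$ of $\ell$. So I would identify $E_{S,s}\cong\PP^1_{[\lambda:\mu]}$, and after a projective transformation I may assume $\ell=\ell_0$, $H=H_0$ as in \eqref{eq:triple0} and $X=\VV(G)$ with $G$ of the form \eqref{eqtype2full}.

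Next, I would observe that the hypothesis $s\notin S\cap V$ ensures that $E_{S,s}$ does not meet the indeterminacy locus of $\tilde{\psi}$ on $E_S$: the strict transform $\tilde{V}\subset\tilde{F}$ of $V$ projects into $V\subset F$, so its intersection with the $\pi$-fibre $E_{S,s}$ is empty whenever $s\notin V$. Hence $\psi_{E_S}|_{E_{S,s}}$ is a morphism. The calculation preceding the statement then identifies this morphism explicitly: for each $[\lambda:\mu]$ the residual line $\ell_{\lambda,\mu}\subset\Pi_{\lambda,\mu}\cap X$ meets $\ell_0\cong\PP^1_{[x_0:x_1]}$ at the root of Equation \eqref{eq:Q01}, giving
\[
\psi_{E_S}|_{E_{S,s}}\colon \PP^1_{[\lambda:\mu]}\longrightarrow \ell_0,\qquad [\lambda:\mu]\longmapsto [-Q_1(\lambda,\mu):Q_0(\lambda,\mu)].
\]

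To finish I would use the assumption $s\notin V$ once more: this is equivalent, as noted immediately before the statement, to the binary quadratic forms $Q_0$ and $Q_1$ having no common zero. Consequently the displayed map is base-point-free, and two binary forms of degree $2$ without a common zero are automatically linearly independent, so the induced morphism $\PP^1\to\PP^1$ has degree $\deg Q_0=2$. Hence $\psi_{E_S}|_{E_{S,s}}$ is two-to-one. I do not anticipate any genuine obstacle: the only point that requires care is that the no-common-root condition simultaneously removes base points and rules out $Q_0,Q_1$ being proportional, which is what forces the degree to be exactly $2$ rather than $1$.
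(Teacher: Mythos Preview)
Your proof is correct and follows essentially the same route as the paper's. Both arguments reduce to the explicit description preceding the corollary: the paper computes the fibre of $\psi_{E_S}|_{E_{S,s}}$ over a point $[x_0:x_1]\in\ell_0$ as the zero locus of the binary quadratic $x_0Q_0(\lambda,\mu)+x_1Q_1(\lambda,\mu)$, while you write the map explicitly as $[\lambda:\mu]\mapsto[-Q_1:Q_0]$ and read off the degree from the fact that $Q_0,Q_1$ have no common zero; these are two phrasings of the same computation.
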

\begin{proof}
    The map $\psi_{E_S}$ is not defined on the fibres of $E_S$ over $V\cap S$ and from the discussion above this
    corresponds to the case in which $Q_0,Q_1$ have a common root. If this is not the case, then given a point
    $[x_0:x_1]\in \ell_0$ there are two pairs $[\lambda,\mu]$ (counted with multiplicity) which satisfy Equation
    \eqref{eq:Q01} which proves the claim.
\end{proof}

Viewed geometrically, given $p\in \ell$, $[\ell]\in S$ general, the line $\ell$ corresponds to the node of $C_p$, the
nodal genus four curve which parametrises lines through $p\in\ell$ (see Lemma \ref{lem:sings Cx}). As a generic nodal
genus four curve possesses two $g^1_3$'s and the fibre through the node of each one of these defines a conjugate point,
the above proposition (and Proposition \ref{prop:ram pts}) give that we have two conjugate lines intersecting $\ell$ at
the point $p$.

\begin{lemma}\label{lem:tVandC}
    The smooth irreducible surface $\tV$ does not contain any fibres of $E_S\to S$. Hence $\tC$ is smooth and
    irreducible and thus $C$ is an irreducible curve whose normalisation is $\tC$. The locus of $s\in S$ over which
    $E_{S,s}$ meets $\tV$ in two points is a finite subset of $C=S\cap V$.
\end{lemma}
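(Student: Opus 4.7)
The plan is to first rule out that $\tV$ contains any fibre of the $\PP^1$-fibration $E_S\to S$, then use Hodge index on the surface $\tV$ to get irreducibility of $\tC$, and finally combine these with the local model of $\tV\cap E_S$ developed in the paragraphs preceding the lemma to obtain the normalisation and finiteness statements.

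For the no-fibre claim, the key inputs are Proposition \ref{prop:normalbundletV}, which gives the two numerical identities $K_{\tV}=3H_{\tV}$ and $[\tC]=E_S|_{\tV}=2H_{\tV}$ on $\tV$, together with the smoothness of $\tC$ from Proposition \ref{prop:Ctildesmooth}. Suppose for contradiction that some fibre $E_{S,s_0}\cong\PP^1$ is contained in $\tV$. Since $H_{\tV}$ is the pullback under the birational morphism $\pi|_{\tV}\colon\tV\to V$ of the Pl\"ucker class on $V$, and $E_{S,s_0}$ is contracted by $\pi$, we get $H_{\tV}\cdot E_{S,s_0}=0$. Smoothness of $\tC$ forces $E_{S,s_0}$ to be a connected component of $\tC$, so that $E_{S,s_0}^2=\tC\cdot E_{S,s_0}=2H_{\tV}\cdot E_{S,s_0}=0$. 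Adjunction on the smooth surface $\tV$ then yields $-2=(K_{\tV}+E_{S,s_0})\cdot E_{S,s_0}=3H_{\tV}\cdot E_{S,s_0}+E_{S,s_0}^2=0$, a contradiction.

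With this in hand, decompose $\tC$ into its (necessarily pairwise disjoint) smooth irreducible components $\tC_i$. By the previous paragraph none of the $\tC_i$ equals a fibre of $E_S\to S$, so $\pi$ does not contract any $\tC_i$, giving $H_{\tV}\cdot\tC_i>0$. Since $\tV\to V$ is birational (as the strict transform), $H_{\tV}^2=H_F^2\cdot[V]=21\,H_F^2\cdot c_2(\cU_F)=945>0$ by Theorem \ref{thm:class of V} and Lemma \ref{lem:intersection theory}. Using $\tC_i^2=\tC_i\cdot\tC=2H_{\tV}\cdot\tC_i$ together with the Hodge index inequality $(H_{\tV}\cdot\tC_i)^2\geq H_{\tV}^2\cdot\tC_i^2$, one obtains $H_{\tV}\cdot\tC_i\geq 2H_{\tV}^2$ for each $i$. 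Summing and comparing with $\sum_i H_{\tV}\cdot\tC_i=\tC\cdot H_{\tV}=2H_{\tV}^2$ forces a single component, so $\tC$ is irreducible.

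The remaining assertions are then formal. The composition $\tC\hookrightarrow E_S\to S$ is finite, and the local analysis using the binary quadratics $q_0,q_1$ immediately preceding the lemma shows $E_{S,s}\cap\tV\neq\emptyset$ if and only if $s\in C$, with a single simple common root at a generic $s\in C$. Hence $\tC\to S$ factors as a surjective finite morphism $\tC\to C$ with generic fibre one reduced point, so $C$ is irreducible and the smooth $\tC$ is its normalisation. The locus where $E_{S,s}$ meets $\tV$ in more than one point is a proper closed subset of the $1$-dimensional curve $C$, hence finite. The main obstacle in the plan is the first paragraph: recognising that the two identities from Proposition \ref{prop:normalbundletV} conspire exactly to make adjunction impossible for a hypothetical contracted $\PP^1$, after which the rest reduces to routine intersection-theoretic bookkeeping on $\tV$.
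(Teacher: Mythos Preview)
Your proof is correct and takes a genuinely different route for the first two claims. For the no-fibre statement, the paper argues directly from the local model: if a whole fibre $E_{S,[\ell_0]}$ were contained in $\tV$, then the binary quadrics $Q_0,Q_1$ of equation \eqref{eq:Q01} would vanish identically, which is excluded for general $X$ by a dimension count. You instead exploit the two numerical identities $K_{\tV}=3H_{\tV}$ and $E_S|_{\tV}=2H_{\tV}$ from Proposition \ref{prop:normalbundletV} together with smoothness of $\tC$ to reach a contradiction via adjunction on $\tV$. For irreducibility of $\tC$, the paper observes that once $\tV\to V$ is finite (which follows from the no-fibre claim), $\tC=2H_{\tV}$ is the pullback of an ample class under a finite morphism, hence ample, so $\tC$ is connected and therefore irreducible by smoothness. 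Your Hodge index argument, using $H_{\tV}^2=945>0$ and $\tC_i^2=2H_{\tV}\cdot\tC_i$ to force a single component, is a pleasant alternative that avoids invoking connectedness of ample divisors. The final step, reducing the two-point locus to a codimension-two condition on $S$ via the common roots of $Q_0,Q_1$, coincides with the paper's argument.

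Your approach has the advantage of being intrinsic to the surface $\tV$ and not relying on the explicit equations \eqref{eqtype2full}--\eqref{eq:Q01} for the first two claims; the paper's approach is more hands-on but makes the geometry of the exceptional fibres completely transparent. One small point: the assertion that the generic $s\in C$ has a single simple common root is itself part of what the lemma proves in the paper (via the codimension-two dimension count), not something established beforehand, so you should state that dimension count explicitly rather than defer it to ``the local analysis immediately preceding the lemma''.
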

\begin{proof}
    From Proposition \ref{prop:Ctildesmooth}, $\tilde{C}=E_S\cap\tilde{V}$ is smooth 1-dimensional. Note now that the
    above analysis gives further information on $\tilde{V}$. First, $\tilde{V}\subset\tilde{F}$ cannot contain an entire
    fibre of $E_S\to S$ since then $Q_0(\lambda,\mu), Q_1(\lambda,\mu)$ of Equation \eqref{eq:Q01} would both be
    identically zero, which cannot happen for a general fourfold. From \eqref{eq:HES} we get that $\tC$ is then an ample
    divisor of $\tV$, since $\tV\to V$ is a finite morphism. Hence it is connected from Bertini, and from smoothness of Proposition
    \ref{prop:Ctildesmooth}, it is also irreducible. In other words $C=S\cap V=\pi(\tC)$ is irreducible and $\pi:\tC\to
    C$ is the normalisation morphism.  Furthermore, $Q_0$ and $Q_1$ can have at most two common roots, and these
    correspond to the number of intersections points of $\tilde{V}$ with a fibre of $E_S$. As from Corollary
    \ref{cor:Cnonempty} $V\cap S\neq\emptyset$ for a general cubic, having one common root imposes one condition in the
    fibre $S$ of $\mathcal{S}$ over $X=\VV(G)\in|\OO_{\PP^5}(3)|$, whereas having two common roots is a codimension two
    condition in $S$, i.e., a finite number of points on $C$. 
\end{proof}

\begin{lemma}\label{lem:classCgenustC}
    The class of $C$ is given by \[[C]=\frac{35}{2}H_F^3\in \HH_2(F,\ZZ)\] (or $6H_S\in\HH^2(S,\QQ)$). We have 
    $p_a(\tilde{C})=4726,\ p_a(C)=8506$.
\end{lemma}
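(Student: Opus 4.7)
The plan is to reduce every computation to intersection theory on the smooth surface $\tilde{V}$, exploiting the divisorial identity $[\tilde{C}] = 2H_{\tilde{V}}$, which follows at once from the transversality in Proposition~\ref{prop:Ctildesmooth} together with the relation $E_S|_{\tilde{V}} = 2H_{\tilde{V}}$ of \eqref{eq:HES}.

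The first step is to compute the self-intersection $H_{\tilde{V}}^2$, which by the projection formula equals $H^2 \cdot [\tilde{V}]$ on $\tilde{F}$. Expanding with the formula \eqref{eq:classtV} and using the standard blowup pushforwards $\pi_* E_S = 0$ and $\pi_* E_S^2 = -[S]$, the four summands evaluate to $H_F^2\mathrm{c}_2(\cU_F) = 45$, $20H_F^4 = 2160$, $-4H_F^2[S] = -1260$, and $0$, drawing on Lemma~\ref{lem:intersection theory} (noting $H_F^2[S] = 5H_F^2\sigma_2 = 315$). Summing yields $H_{\tilde{V}}^2 = 945$. Then, since $\pi|_{\tilde{C}}:\tilde{C}\to C$ is the normalization morphism by Lemma~\ref{lem:tVandC}, the projection formula again gives
\[
[C]\cdot H_F = \pi_*[\tilde{C}]\cdot H_F = [\tilde{C}]\cdot H_{\tilde{V}} = 2H_{\tilde{V}}^2 = 1890,
\]
which on dividing by $H_F^4 = 108$ produces $[C] = \tfrac{35}{2}H_F^3$. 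The same pairing on $S$, where $H_S^2 = H_F^2[S] = 315$, yields the equivalent expression $[C] = 6H_S$.

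The two arithmetic genera then follow by adjunction. On $\tilde{V}$, using $K_{\tilde{V}} = 3H_{\tilde{V}}$ from Proposition~\ref{prop:normalbundletV} together with $\tilde{C} = 2H_{\tilde{V}}$,
\[
2p_a(\tilde{C}) - 2 \;=\; \tilde{C}\cdot(K_{\tilde{V}}+\tilde{C}) \;=\; 2H_{\tilde{V}}\cdot 5H_{\tilde{V}} \;=\; 10\,H_{\tilde{V}}^2 \;=\; 9450,
\]
so $p_a(\tilde{C}) = 4726$. On $S$, using $K_S = 3H_S$ from Theorem~\ref{thm:amerik and osy} and $[C] = 6H_S$,
\[
2p_a(C) - 2 \;=\; [C]\cdot(K_S+[C]) \;=\; 6H_S\cdot 9H_S \;=\; 54H_S^2 \;=\; 17010,
\]
so $p_a(C) = 8506$. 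The only technical point requiring care is the handling of the Grothendieck relation for $\xi = \mathrm{c}_1(\OO_{E_S}(1))$ on $E_S = \PP(N_{S/F})$ when deriving the pushforwards $\pi_* E_S^k$; once those sign conventions are fixed, the remainder is routine Chern-class arithmetic, and as a consistency check one recovers Theorem B's count of nodes of $C$ as $p_a(C) - p_a(\tilde{C}) = 3780$.
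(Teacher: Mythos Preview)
Your computation of $p_a(\tilde{C})$ is correct and essentially identical to the paper's: both evaluate $H_{\tilde V}^2=H^2\cdot[\tilde V]=945$ from \eqref{eq:classtV} and then apply adjunction on $\tilde V$ with $K_{\tilde V}=3H_{\tilde V}$ and $\tilde C=2H_{\tilde V}$.

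There is, however, a genuine gap in how you obtain $[C]=6H_S$ (and, to a lesser extent, $[C]=\tfrac{35}{2}H_F^3$). You extract these from the single number $[C]\cdot H_F=1890$ by ``dividing''. For the statement in $\HH_2(F,\ZZ)$ this requires $\HH_2(F,\QQ)^{\mathrm{alg}}$ to be one-dimensional, i.e.\ $X$ very general, which is stronger than the running hypothesis; one can rescue it by a specialisation argument, but you do not say so. More seriously, the claim $[C]=6H_S$ in $\HH^2(S,\QQ)$ never follows from a pairing with $H_S$ alone: $S$ is a surface of general type with $h^{1,1}(S)>1$, so $([C]-6H_S)\cdot H_S=0$ does not force $[C]=6H_S$. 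This matters because your adjunction for $p_a(C)$ uses $C^2=36H_S^2$, which is exactly the piece not determined by $[C]\cdot H_S$.

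The paper closes this gap by computing $[C]$ directly rather than via a pairing: one intersects \eqref{eq:classtV} with $E_S$ (not with $H^2$) and pushes forward through $\pi$, using the blowup identities $\pi_*(H^2E_S)=\pi_*(\mathrm c_2\,E_S)=0$, $\pi_*(HE_S^2)=-H_S$, and $\pi_*(E_S^3)=-\mathrm c_1(N_{S/F})=-3H_S$. These outputs genuinely lie in $i_*\HH^*(S)$, and one reads off $[C]=18H_S-12H_S=6H_S$ as an equality of divisor classes on $S$; the expression $\tfrac{35}{2}H_F^3$ then follows from $i_*H_S=H_F\cdot[S]$ and Lemma~\ref{lem:intersection theory}. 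This is only a couple of extra lines and is precisely the computation your closing remark about ``the Grothendieck relation for $\xi$'' is pointing towards --- so you have the right tool in hand, you just need to apply it to $E_S\cdot[\tilde V]$ rather than to $H^2\cdot[\tilde V]$.
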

\begin{proof}
    By the definition of these loci, we have $\tC=E_S|_{\tV}$. From \eqref{eq:classtV} it follows that
    \[ [\tilde{C}]=[\tilde{V}].E_S = 20H^2E_S-18HE_S^2+4E_S^3+E_S\mathrm{c}_2(\pi_1^*\cU_2). \] 
    To compute the terms, we use the standard diagram
    \[
        \xymatrix{
        E_S=\PP(N_{S/F}) \ar@{^{(}->}[r]^-j \ar[d]^{\pi'} & \tF \ar[d]^{\pi} \\
        S \ar@{^{(}->}[r]^-i & F.
        }
    \]    
    Let $\xi$ be the tautological bundle of $E_S=\PP (N_{S/F})$. Then $j^*E_S=\xi$, which implies that $\pi_*E_S^k
    =i_*\pi'_{*}\xi^{k-1}$, and let $\xi^2-\mathrm{c}_1(\pi'^*N_{S/F}) \xi +\mathrm{c}_2(\pi'^*N_{S/F})=0$ the standard
    relation. We then have $\pi_*(H^2E_S)=\pi_*(\mathrm{c}_2E_S)=0, \pi_*HE_S^2=-H_S$ and
    $\pi_*E_S^3=-\mathrm{c}_1(N_{S/F})$. As $K_F=0$, we obtain $\mathrm{c}_1(N_{S/F})=K_S=3H_S$ from Theorem \ref{thm:amerik
    and osy}. Then $[C]=\pi_*[\tilde{C}]=6H_S$. Theorem \ref{thm:amerik and osy} and Lemma \ref{lem:intersection theory}
    imply the result.

    To compute the genus of $\tilde{C}\subset\tV$, note that $K_{\tV}=3H_{\tV}$ from Proposition \ref{prop:normalbundletV}
    and, as mentioned in \eqref{eq:HES}, we have that $E_S|_{\tV}=2H|_{\tV}$. Therefore
    $[\tC(\tC+K_{\tV})]= 10H^2_{\tV}=10H[\tV] $. From \eqref{eq:classtV}, we conclude that
    \[ [\tC(\tC+K_{\tV})] =10( \mathrm{c}_2(\pi_1^*\cU_2)H^2 +20H^4+4H^2E_S^2-18H^3E_S). \] 
    Using Theorem \ref{thm:amerik and osy} and Lemma \ref{lem:intersection theory} we get $
    \mathrm{c}_2(\pi_1^*\cU_2)H^2=45$, $H^4=108$, $H^2E_S^2=-[S]H_F^2=-315$, and $H^3E_S=H_F^3\pi_*[S]=0$. Hence
    $[\tC(\tC+K_{\tV})] =9450$ implying the first genus computation. Similarly, computing now via adjunction for $C\subset
    S$ we have from the above that $C(C+K_S)=54H_S^2=54H_F^2[S]=54\cdot 315=17010$ so that $p_a(C)=8506$.
\end{proof}

In the following, we say that a point $x$ in a variety $X$ has \textit{normal crossing} singularities if 
\[\widehat{\OO}_{X,x} \cong \CC[[x_1,\ldots,x_n]]/(x_1\cdots x_r).\]

\begin{proposition}\label{prop:cCnormalcrossings}
    The universal intersection curve $\mathcal{C}=\mathcal{S}\cap\mathcal{V}\subset\cF$ is irreducible and there is an
    open subset $U\subset|\OO(3)|$ so that $\cC|_U$, the restriction to the inverse image of $U$, has normal crossings
    singularities and its normalisation $\cC|_U^\nu\to\cC|_U$ identifies pairs of points of a divisor.
\end{proposition}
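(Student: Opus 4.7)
The plan is to deduce all three claims from the universal analogues of the local analysis around equation \eqref{eq:Q01}.

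For irreducibility of $\cC$, I set $\tilde{\cC}=\tilde{\cV}\cap\cE_{\mathrm{sm}}\subset\tilde{\cF}$. By Proposition \ref{prop:Ctildesmooth} the variety $\tilde{\cC}$ is smooth, and Lemma \ref{lem:tVandC} combined with generic smoothness over the irreducible base $|\OO_{\PP^5}(3)|_{\mathrm{sm}}$ shows that the generic fibre of $\pr_1:\tilde{\cC}\to|\OO_{\PP^5}(3)|_{\mathrm{sm}}$ is the irreducible curve $\tC$. Hence $\tilde{\cC}$ is smooth and irreducible. Since $\cC$ is the image of $\tilde{\cC}$ under the universal contraction $\tilde{\cF}\to\cF$ of $\cE$ onto $\cS$, it too is irreducible.

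Next I shrink $|\OO_{\PP^5}(3)|_{\mathrm{sm}}$ to an open $U$ over which the conclusions of Lemma \ref{lem:tVandC} hold fibrewise for every $X\in U$. The finite morphism $\tilde{\cC}|_U\to\cC|_U$ has smooth source and is birational (fibrewise it is the normalisation $\tC\to C$), hence it is the normalisation $\cC|_U^\nu\to\cC|_U$. Its non-isomorphism locus is cut out universally by the condition that the two quadrics $Q_0(\lambda,\mu)$, $Q_1(\lambda,\mu)$ of \eqref{eq:Q01} share two distinct common roots; this is a codimension-one (resultant-type) condition on the $45$-parameter family \eqref{eqtype2full}, determining a divisor $\tilde D\subset\tilde{\cC}|_U$ whose image $D\subset\cC|_U$ is a divisor. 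The $\ZZ/2$-action on $\tilde D$ swapping the two common roots realises $\tilde{\cC}|_U\to\cC|_U$ as identifying pairs of points of $\tilde D$ over $D$.

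It remains to verify the normal crossings structure of $\cC|_U$ along $D$. At a generic point of $\tilde D$ the two common roots of $(Q_0,Q_1)$ are distinct and simple, and a local computation in the style of \eqref{condit_tangent} from Theorem \ref{thm:tVconn} shows that the two branches of $\cC|_U$ meeting along $D$ are smooth and transverse: each branch is smoothly parameterised by tracking one of the two common roots, and a first-order deformation of the coefficients of \eqref{eqtype2full} moves these two roots independently. After possibly shrinking $U$ further so that $\tilde D$ remains reduced and smooth, one concludes normal crossings along $D$.

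The main obstacle is this last transversality check. Conceptually it is immediate — the two common roots constitute independent first-order deformation parameters — but making it rigorous requires fixing coordinates on $\cS$ near a chosen $(X,\ell_0,H_0)$ and computing an explicit Jacobian parallel to the one encoded by the matrix \eqref{matrix} in the proof of Theorem \ref{thm:tVconn}. With this in hand, $\cC|_U$ has normal crossings along $D$ and $\tilde{\cC}|_U\to\cC|_U$ is the claimed normalisation identifying pairs of points of $\tilde D$.
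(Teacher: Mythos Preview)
Your overall architecture is sound and the geometric picture is correct, but the approach differs from the paper's and leaves the hardest step undone.

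For irreducibility, you fibre $\tilde{\cC}$ over $|\OO(3)|$ and invoke Lemma~\ref{lem:tVandC}. This works, though the inference ``smooth total space with irreducible generic fibre $\Rightarrow$ irreducible'' needs the extra input that every component of $\tilde{\cC}$ dominates the base (otherwise there could be stray components over proper closed subsets). In fact this is immediate from the description in the proof of Proposition~\ref{prop:Ctildesmooth}, where $\tilde{\cC}\to\cG_\Delta$ has all fibres a fixed irreducible determinantal hypersurface; that already gives irreducibility without appeal to Lemma~\ref{lem:tVandC}. The paper instead fibres $\cC$ over $\cH$ (the incidence of pairs $(\ell,H)$) and observes that each fibre is a cone over the resultant hypersurface $R=\{\operatorname{Res}(Q_0,Q_1)=0\}\subset\PP^5$ of two binary quadrics, which is irreducible.

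The substantive gap is exactly the one you flag: the normal crossings check. You propose to verify it by a Jacobian computation on $\cS$ in the style of \eqref{matrix}, but you do not carry it out, and the heuristic ``the two common roots move independently'' is not by itself a proof that the two image branches in $\cS$ are transverse. The paper's route here is genuinely different and avoids computing in the large universal family: since $\cC\to\cH$ is $\PGL_5$-equivariantly isotrivial with fibre (a cone over) the fixed $5$-dimensional resultant $R$, the whole question reduces to showing that $R$ itself has normal crossings along its singular locus (the rank~$\leq 1$ locus where $Q_0\propto Q_1$), with normalisation of degree two there. This is done as a separate Lemma by writing down explicit equations for $R^\nu$ in $\PP^7$, computing the Jacobian of $R^\nu\to R$ at the two preimages of a singular point, and checking the two image tangent hyperplanes are distinct. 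An \'etale-local trivialisation (via the $\mathrm{Isom}$ scheme) then transports this to $\cC$. So the paper trades your proposed Jacobian computation in a $57$-dimensional family for an explicit computation on a fixed quartic hypersurface in $\PP^5$; this is what makes the argument go through cleanly. Your approach would in principle also work, but the missing computation is precisely the content of the proposition.
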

\begin{proof}
    From the analysis around Equation \eqref{eq:cH}, the second projection of the universal type two locus
    $\cS\subset|\OO(3)|_{\mathrm{sm}}\times\cH\to\cH$ is a $\PP^{45}$-fibration over the locus $\cH$ of pairs $(\ell,H)$
    where $\ell\subset H\cong\PP^3\subset\PP^5$ (see \eqref{eq:cH}), and the line $\ell$ will be a type two triple line
    (i.e., in $S\cap V$) if and only if for $G$ of type \eqref{eqtype2full}, the plane quadrics $Q_0,Q_1$ have a common
    root (see Lemma \ref{lem:tVandC}). In particular the fibre of $\cC$ over $\cH$ in $\PP^{45}$ - i.e., the locus
    parametrising cubics $G$ for which a fixed pair $(\ell_0,H_0)\in\cH$ is a triple type two line - is a cone over the
    corresponding resultant $R$ in 6 variables, namely the coefficients of the $Q_i$. Note also that the induced
    $\mathrm{PGL}_5$ action on points of $\cH$ is transitive, as the property of being a triple type two line is
    preserved under this action.

    Denoting these coefficients $x_1,x_2,x_3$ (resp.\ $y_i$) of $Q_1$ (resp.\ $Q_2$), the resultant is given by
    \[ R=\VV((x_1y_3-x_3y_1)^2-(x_1y_2-x_2y_1)(x_2y_3-x_3y_2)). \]
    Since $R$ is irreducible, the universal locus $\cC$ must also be irreducible.

    Observe that the family of vertices of the cones over the resultant in each fibre $\PP^{45}$ are in high codimension
    when mapped to $|\OO(3)|$, so as our cubics are assumed general, we may ignore this locus, and we denote by
    $\widehat{R}\subset\PP^{45}$ its complement in the cone, which is of dimension 44. 
    \begin{lemma}\label{lem:Rnc}
        The resultant $R$ generically has normal crossing singularities, resolved by a degree 2 normalisation. 
    \end{lemma}
    \begin{proof}
    The normalisation $R^{\nu}$ of the resultant $R$ is given by the vanishing locus of the polynomials
    \[ \begin{aligned}
    & wx_3-w'y_3,\\
    & wx_2-w'y_2+x_3y_1-x_1y_3,\\
    & wx_1-w'y_1+x_2y_1-x_1y_2,\\
    & w'^2-w'x_2+x_1x_3,\\
    & ww'-w'y_2+x_3y_1,\\
    & w^2-wy_2+y_1y_3
    \end{aligned} \]
    in $\CC[w,w',x_1,\ldots,y_3]$. The singularities of the normalisation are in codimension 4 so we may ignore them as
    we are working generically. The singular locus of the resultant is the smooth rank $\leq 1$ determinantal locus
    given by the matrix
    \[ \begin{pmatrix}
    x_1 & x_2 & x_3 \\
    y_1 & y_2 & y_3
    \end{pmatrix}.  \]
    It consists of points of the form
    \[ [x_1: x_2: x_3: tx_1: tx_2:tx_3] \mbox{ or } [ty_1:ty_2:ty_3:y_1:y_2:y_3].  \]
    and over these points the normalisation has, generically, two points, e.g., over the first type of point the two
    points are \[[tw'_i:w'_i: x_1:x_2:x_3:tx_1:tx_2:tx_3],\ i=1,2,\] where $w'_i$ are the roots of the equation
    $w'^2-w'x_2+x_1x_3$. It has one point over the locus $x_2^2=4x_1x_3$ (where the quadric has a double root, i.e., $w'=x_2/2$).

    A calculation shows that, after performing row transformations, the Jacobian matrix at the point $[tw': w':
    x_1: x_2: x_3: tx_1: tx_2:tx_3]$ is in all cases equivalent to the matrix (we omit zero rows)
    \[J=\begin{pmatrix}
                  0 & 0 & tx_3^2 & -tx_3w' & tw'^2 & -x_3^2 & x_3w' & -w'^2 \\
                  0 & 2w'-x_2 & x_3 & -w' & x_1 & 0 & 0 & 0 \\
                  2w'-x_2 & 0 & 0 & 0 & 0 & x_3 & -w' & x_1 
    \end{pmatrix}, \]
    except when $(w',x_3)=(0,0)$, in which case we get the matrix 
    \[J=\begin{pmatrix}
                  0 & 0 & -tx^2_2 & tx_1x_2 & -tx_1^2 & x^2_2 & -x_1x_2 & x_1^2 \\
                  0 & -x_2 & 0 & 0 & x_1 & 0 & 0 & 0 \\
                  -x_2 & 0 & 0 & 0 & 0 & 0 & 0 & x_1 
    \end{pmatrix}.  \]

    The above matrices have, outside the locus $x_2^2=4x_1x_3$, the first $3\times 2$ minor of rank $2$. Hence, outside
    the above locus, the differential of the map  $p: R^{\nu} \to R$ (which is the restriction of the projection $p :
    \PP^7 \dasharrow \PP^5$ to the last six coordinates) at the above point is injective: the tangent space is the
    linear space of vectors $v$ with $Jv=0$. The kernel of the differential consists of those vectors with the last six
    coordinates zero, but in this case the first two coordinates are also zero.
     
    Moreover, locally the images of the two sheets of the normalisation, where the points $[tw'_i:w'_i:
    x_1:x_2:x_3:tx_1:tx_2:tx_3]$, $i=1,2$, belong, are smooth hypersurfaces in $\PP^5$. The corresponding perp vectors
    of those images at the points $[x_1:x_2:x_3:tx_1:tx_2:tx_3]$, $i=1,2$, when $(w',x_3)\neq (0,0)$ are given by 
    \[ \langle \; tx^2_3, \;\; -tx_3w', \;\; tw'^2, \;\; -x_3^2, \;\; x_3w', \;\; -w'^2 \; \rangle \]
    with $w'=w'_i$ and by 
    \[ \langle
     \-tx^2_2 ,\;\; tx_1x_2, \;\; -tx_1^2 ,\;\; x^2_2 ,\;\; -x_1x_2 ,\;\; x_1^2
    \rangle \]
    when $(w',x_3)=(0,0)$. These are not parallel in all cases and hence we have two branches with normal crossing
    there.
    \end{proof}
     
    As $\cC\to\cH$ is generically isotrivial, a standard argument with the $\mathrm{Isom}$-scheme (see, e.g.,
    \cite[p.3]{buium}) shows that there exists an open neighbourhood $U$, and a finite \'etale cover $U'\to U$ so that
    the pullback of $\cC$ (minus the vertices of the cones as above) is isomorphic to
    \[U'\times \widehat{R}.\]
    The transitivity of the $\PGL_5$-action on $\cH$ implies that any point in $\cH$ has a finite \'etale
    neighbourhood which trivialises $\cC$. Clearly $U'\times \widehat{R}$ has normal crossing singularities outside a
    codimension 2 locus from Lemma \ref{lem:Rnc}. As being normal crossings can be checked \'etale locally, we deduce
    that $\cC$ has normal crossing singularities outside a codimension 2 closed subset too. By a dimension count, the
    non-normal-crossing locus cannot surject onto $|\OO(3)|_\mathrm{sm}$, so after removing its image, we may assume
    there is an open in $|\OO(3)|$ over which the universal family $\cC$ has normal crossings.
\end{proof}

\begin{theorem}\label{theo:nodesC}
    The curve $C$ is irreducible and has 3780 nodes and no other singular points.
\end{theorem}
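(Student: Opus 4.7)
The plan is to deduce the nodal structure of $C$ from Proposition \ref{prop:cCnormalcrossings} and then count the nodes using the arithmetic genus computation of Lemma \ref{lem:classCgenustC}. Irreducibility has already been established in Lemma \ref{lem:tVandC}, together with the fact that the normalisation $\pi\colon\tC\to C$ is obtained by restricting the blowdown map and that the non-injective locus of $\tC\to C$ is a finite subset of $C$ (corresponding to those $s\in S$ for which $E_{S,s}$ meets $\tV$ in two points).

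To show that every singular point of $C$ is an ordinary node, I would apply Proposition \ref{prop:cCnormalcrossings}: the universal curve $\cC|_U\to U$ has total space with normal crossings singularities along a smooth codimension-one subvariety $\Sing(\cC|_U)$, and the degree-two normalisation identifies pairs of points of that divisor. Since $\Sing(\cC|_U)$ is smooth of dimension $55$ and the projection to $U$ is proper, its image is closed in $U$ and, on a dense open $U'\subset U$, the restriction $\Sing(\cC|_U)\cap\pr_1^{-1}(X)$ is a transverse, finite intersection for $X\in U'$. Near each such point, $\cC|_U$ is analytically $\VV(xy)\subset\mathbb{A}^{56}$, and slicing by a fibre transverse to $\Sing(\cC|_U)$ produces two smooth analytic branches in $C$ meeting transversely, i.e., an ordinary double point. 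Hence for a general cubic $X$, every singularity of $C$ is a node and the normalisation $\pi\colon\tC\to C$ identifies precisely the two preimages of each such node.

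The count then follows from the standard genus formula. For a reduced, irreducible curve with only nodal singularities, the number of nodes equals
\[ \delta \;=\; p_a(C)-p_a(\tC). \]
From Lemma \ref{lem:classCgenustC}, $p_a(C)=8506$ and $p_a(\tC)=4726$, giving
\[ n \;=\; 8506-4726 \;=\; 3780. \]

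The main obstacle is the transfer of the normal crossings structure from the high-dimensional universal curve $\cC|_U$ to the one-dimensional fibre $C$, i.e., verifying that the two smooth branches of $\cC|_U$ meeting along $\Sing(\cC|_U)$ are transverse to $\pr_1$ at a generic point of $\Sing(\cC|_U)$ over a general fibre, and that no further degenerations of the local analytic type (tacnodes, triple points, etc.) can occur on $C$ for $X$ generic. Both are resolved by the dimension/transversality argument above together with the finiteness statement in Lemma \ref{lem:tVandC}, which caps the complexity of the non-injective locus of $\tC\to C$ at isolated double points.
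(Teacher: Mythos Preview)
Your proposal is correct and follows essentially the same route as the paper: irreducibility from Lemma \ref{lem:tVandC}, the normal-crossings structure of the universal curve $\cC|_U$ from Proposition \ref{prop:cCnormalcrossings} to force the fibre singularities to be nodes, and the node count from the genus difference in Lemma \ref{lem:classCgenustC}. The only cosmetic difference is that the paper passes through the normalisation $\cC^\nu\to\cC$ (invoking Zariski's Main Theorem and purity of the branch locus to arrange that each fibre of $\cC^\nu$ meets $\nu^{-1}(\cC_{\mathrm{fs}})$ transversely), whereas you slice the local model $\VV(xy)$ directly; these are equivalent ways of saying that each of the two smooth branches is transverse to a general fibre.
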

\begin{proof}
    From Corollary \ref{lem:classCgenustC}, as the difference between the genus of $C$ and its normalisation $\tC$ is
    exactly 3780 (in particular positive), we obtain that $C$ must be singular for a generic cubic fourfold. From Lemma
    \ref{lem:tVandC}, $C$ is irreducible. To conclude the result, it suffices thus to show that $C$ is nodal.

    To this aim, we note that Proposition \ref{prop:cCnormalcrossings} proves that $\cC$ has normal crossings over an
    open $U\subset|\OO(3)|$.  If we denote by $\cC_{\mathrm{fs}}$ the locus of singular
    points in the fibres over $U$, then as pointed out above, $\cC_{\mathrm{fs}}\to U$ is surjective. Let 
    \[
    \xymatrix{
        \cC^\nu\ar[rr]^\nu\ar[dr]&&\cC\ar[dl] \\
        &U&
    }
    \]
    be the normalisation of $\cC$. As a consequence of the proof of Lemma \ref{lem:Rnc}, we may shrink $U$ further and assume
    that $\cC^\nu$ is in fact smooth. The generic fibre of $\cC^\nu\to U$ is also smooth, so it can be checked from
    Zariski's main theorem that after shrinking $U$ further, $\nu_u:\cC^\nu_u\to\cC_u$ is the normalisation of every fibre over
    $u\in U$. Since $\nu^{-1}(\cC_{\mathrm{fs}})\to U$ has finite fibres, by purity of the branch locus we may shrink
    $U$ further and assume that every fibre of $\cC^\nu$ meets $\nu^{-1}(\cC_{\mathrm{fs}})$ transversely (at an even
    number of points). The map $\nu_u : \cC^\nu_u \to C_u$ identifies in pairs the above points forming nodes in the
    image because of the above mentioned  transversality condition. Hence, as $C$ is nodal and its arithmetic genus
    differs by 3780 from its normalisation, it must have precisely 3780 nodes. 
\end{proof}

\begin{corollary}
    The surface $V\subset F$ is irreducible and projective and has precisely 3780 isolated non-normal singularities,
    whose normalisation $\tilde{V}\to V$ is two-to-one over this non-normal locus. Hence $V$ is of general type.
\end{corollary}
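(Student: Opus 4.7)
The plan is to realise $\pi|_{\tV}:\tV\to V$ as the normalisation morphism of $V$ and then simply count the points over which this normalisation has non-trivial fibre. Irreducibility and projectivity of $V$ follow from Corollary \ref{cor:classV} together with the fact that $V\subset F$ is a closed subvariety of a projective variety. To identify $\pi|_{\tV}$ as the normalisation, I would first show it is finite: by Lemma \ref{lem:tVandC} the smooth surface $\tV$ contains no $\PP^1$-fibre of $E_S\to S$, and since $\pi$ is an isomorphism away from $E_S$, every fibre of $\pi|_{\tV}$ is $0$-dimensional, giving finiteness by projectivity. Birationality is immediate from that of $\pi$ together with $V\not\subset S$. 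Since $\tV$ is smooth (Lemma \ref{lem:Vtildesmooth}, Theorem \ref{thm:tVconn}) and hence normal, a finite birational morphism from $\tV$ must coincide with the normalisation of $V$.

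Next I would pinpoint the non-isomorphism locus. Outside $\tC=\tV\cap E_S$, the map $\pi|_{\tV}$ is an isomorphism, so $V$ is smooth along $V\setminus C$. On $\tC$, the restriction $\pi|_{\tC}:\tC\to C$ is the normalisation of $C$ by Lemma \ref{lem:tVandC}, and by Theorem \ref{theo:nodesC} the curve $C$ has exactly $3780$ nodes and no other singularities. Therefore $\pi|_{\tC}$ is bijective away from these $3780$ nodes and has two distinct preimages over each of them.

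The final step is to check that $\pi|_{\tV}$ is a local immersion at every point of $\tC$, so that the failure of the normalisation to be an isomorphism is governed purely by fibre cardinality. At a point $p\in\tC$ above $[\ell]\in C$, the kernel of $d\pi$ at $p$ is $T_pE_{S,[\ell]}$, and transversality of $\tV$ with $E_S$ in $\tC$ (Proposition \ref{prop:Ctildesmooth}) gives $T_p\tV\cap T_pE_S=T_p\tC$. It then suffices to verify that $T_p\tC\not\subset T_pE_{S,[\ell]}$, i.e., that $\tC$ meets the fibre $E_{S,[\ell]}$ transversely inside $E_S$ at $p$. This holds because the composition $\tC\to C\hookrightarrow S$ is étale at $p$ in both regimes: at preimages of smooth points of $C$ (where $\tC\to C$ is a local isomorphism) and at each of the two preimages of a node (since $\tC$ is the smooth normalisation). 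Hence $\pi|_{\tV}$ is an immersion everywhere. Over a smooth point of $C$ it is a local isomorphism, so $V$ is smooth there, while over each of the $3780$ nodes of $C$ two smooth sheets of $V$ meet at the image point, producing an isolated non-normal singularity whose normalisation has exactly two preimages. These account for the complete non-normal locus of $V$. The main technical point is the transversality verification at the nodes of $C$, but it reduces cleanly to the normalisation property of $\tC\to C$ combined with Proposition \ref{prop:Ctildesmooth}.
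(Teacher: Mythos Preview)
Your argument is correct and follows the same route as the paper: identify $\pi|_{\tV}:\tV\to V$ as the normalisation, verify it is a local immersion along $\tC$ (the paper phrases this as ``$\tV$ meets the fibre of $E_S$ at one point with multiplicity one'' via the $Q_0,Q_1$ description, whereas you argue abstractly via $\tC\to S$), and then read off the $3780$ two-to-one points from the nodes of $C$. One small terminological slip: the composite $\tC\to C\hookrightarrow S$ is an \emph{immersion} (unramified), not \'etale, since source and target have different dimensions---but unramifiedness is precisely what you need for $T_p\tC\cap T_pE_{S,[\ell]}=0$, so the logic is sound.
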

\begin{proof}
    The first claims are from Corollary \ref{cor:classV}. Using notation around Equation \eqref{eq:Q01} and Lemma
    \ref{lem:tVandC}, $\tilde{V}$ meets a fibre $E_{S,[\ell]}\cong\PP^1$ in either one or two distinct points, namely
    the number of common roots of $Q_0, Q_1$ (noting that the case of a double root is ruled out by genericity). From
    Lemma \ref{lem:tVandC} and Theorem \ref{theo:nodesC}, there are 3780 triple lines of second type $\ell\in S\cap V$
    for which there are precisely two distinct 2-planes tangent to $\ell$ which have the same conjugate line, namely
    $\ell$. A local computation shows that $V$ cannot be singular along smooth points of $C$: over a smooth point of
    $C$, $\tilde{V}$ meets the fibre of the exceptional divisor $E_S$ at one point with multiplicity one and hence
    cannot have singular image. Hence the map $\tilde{V}\to V$ identifies these 3780 pairs of points giving as many
    non-normal isolated singularities as such pairs, and is an isomorphism otherwise.
\end{proof}

\begin{theorem}\label{thm:nodalcount}
    If $X$ is very general, every singular rational curve in $F$ of primitive class $\beta$ has precisely one node and
    there are 3780 such curves.
\end{theorem}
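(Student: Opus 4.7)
The plan is to establish a bijection between singular rational curves in $F$ of class $\beta$ and the $3780$ nodes of $C$ produced by Theorem \ref{theo:nodesC}. By Theorem \ref{thm:osy}, every such rational curve is of the form $R_s := \tilde{\phi}(E_{S,s}) \subset F$ for a unique $s \in S$; set $f_s := \tilde{\phi}|_{E_{S,s}}: \PP^1 \to F$. For any $\ell' \in R_s$ distinct from $\ell_s$, the plane $\langle \ell_s, \ell' \rangle$ is the unique $2$-plane tangent to $X$ along $\ell_s$ with residual $\ell'$, so $f_s$ is birational onto $R_s$ and $\PP^1$ is the normalisation of $R_s$. Singularities of $R_s$ therefore arise either as images of distinct points of $\PP^1$ (double points) or at zeros of $df_s$ (cuspidal points).

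I would first argue that the double points of $f_s$ force $s$ to be a node of $C$. Suppose $f_s(t_1) = f_s(t_2) = \ell'$ with $t_1 \neq t_2$; then two distinct $2$-planes $\Pi_1, \Pi_2$ tangent to $X$ along $\ell_s$ share residual $\ell'$, so each contains both $\ell_s$ and $\ell'$. If $\ell' \neq \ell_s$ these two distinct lines meet at a point and uniquely span the $2$-plane $\langle \ell_s, \ell' \rangle = \Pi_1 = \Pi_2$, a contradiction; hence $\ell' = \ell_s$, forcing $s \in V \cap S = C$ and both $t_1, t_2 \in E_{S,s} \cap \tilde{V}$. By Lemma \ref{lem:tVandC} the locus of $s \in S$ for which $E_{S,s}$ meets $\tilde{V}$ in two distinct points is a finite subset of $C$, and by Proposition \ref{prop:cCnormalcrossings} together with Theorem \ref{theo:nodesC} this locus coincides with the $3780$ nodes of $C$. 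Conversely, at each such node both points of $E_{S,s} \cap \tilde{V}$ map to $\ell_s$ under $f_s$, producing a self-intersection of $R_s$ at $\ell_s$.

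To finish I would verify each such $R_s$ has precisely one ordinary node at $\ell_s$ and no other singular points. For the nodal nature of the self-intersection I would use the transversal intersection $\tilde{V} \cap E_S = \tilde{C}$ of Proposition \ref{prop:Ctildesmooth} together with the two-sheeted structure of the normalisation $\tilde{V} \to V$ at its $3780$ non-normal points (from the corollary preceding the theorem) to argue that the two tangent directions $df_s(T_{t_i} E_{S,s}) \subset T_{\ell_s} F$ for $i=1,2$ are linearly independent, yielding a node rather than a tacnode. To rule out cusps: for $s \in C$ away from the nodes, $V$ is smooth at $\ell_s$ (again by the preceding corollary), so $df_s$ is non-vanishing at the unique preimage in $E_{S,s} \cap \tilde{V}$; for $s \notin C$, the rational curve $R_s$ does not pass through $\ell_s$ (since $\ell_s \in R_s$ would force a triple plane, hence $s \in V \cap S = C$), and away from $\ell_s$ the map $f_s$ is an immersion by a standard tangent space computation for the Voisin map. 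This yields a bijection between singular rational curves of class $\beta$ and the $3780$ nodes of $C$, with each carrying exactly one node.

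The main obstacle is the transversality assertion for the two branches of $R_s$ at $\ell_s$ when $s$ is a node of $C$, which would rigorously exclude higher tangential meeting. I would carry this out via a direct local computation in the coordinates of \eqref{eqtype2full}: at such $s$ the quadrics $Q_0, Q_1$ of \eqref{eq:Q01} share two distinct common roots $[\lambda_i, \mu_i]$, $i=1,2$, and I would verify that the two corresponding tangent vectors to $R_s$ at $\ell_s$ (obtained by differentiating the family of residual lines in each direction $[\lambda_i, \mu_i]$) are linearly independent in $T_{\ell_s} F$, the non-degenerate case being generic and persistent under deformation of $X$.
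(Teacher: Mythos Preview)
Your bijection between singular rational curves of class $\beta$ and nodes of $C$ is exactly the paper's argument: both use Theorem \ref{thm:osy} to reduce to curves $\tilde{\phi}(E_{S,s})$, and both observe that a double point of $f_s$ forces $\ell'=\ell_s$ (since two distinct tangent $2$-planes sharing a residual $\ell'\neq\ell_s$ would coincide), hence $s\in S\cap V=C$ with $E_{S,s}\cap\tV$ consisting of two points, i.e.\ $s$ is a node of $C$.

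The substantive divergence is in establishing that the singularity is a single ordinary node. The paper does not attempt your local transversality computation; instead it invokes \cite[Proposition 1.4, Lemma 3.8]{no}, which proves directly that every curve in $F$ of class $\beta$ is at worst nodal (indeed is either smooth rational or a $1$-nodal plane cubic). This black-boxes exactly the two points you flag as obstacles: excluding tacnodes at $\ell_s$ and excluding cusps anywhere on $R_s$.

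Your proposed direct route is plausible but, as you acknowledge, incomplete. One step does not follow as written: for $s\in C$ a smooth point, the assertion ``$V$ is smooth at $\ell_s$, so $df_s$ is non-vanishing at the unique preimage'' is a non sequitur. Smoothness of the surface $V$ at $\ell_s$ says nothing a priori about the differential of $\tilde{\phi}|_{E_{S,s}}$ at the point $t\in E_{S,s}\cap\tV$; the latter concerns whether $T_tE_{S,s}\subset\ker d\tilde{\phi}_t$, which is a question about the ramification of the degree-$16$ Voisin map along the exceptional divisor, not about $V$. Similarly, ``away from $\ell_s$ the map $f_s$ is an immersion by a standard tangent space computation'' would need to be carried out. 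If you want to avoid citing \cite{no}, the cleanest substitute is probably to compute $df_s$ explicitly in the coordinates of \eqref{eqtype2full}, differentiating the residual line $\ell_{\lambda,\mu}$ with respect to $[\lambda:\mu]$; this is the same computation you propose for the two-branch transversality, and would handle cusps simultaneously.
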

\begin{proof}
    From \cite[Proposition 6]{amerik}, the rational curves $\phi(E_{S,[\ell]})$ are of class $\beta$ in $\HH_2(F,\ZZ)$
    and Theorem \ref{thm:osy} proves that if $X$ is very general, every rational curve in $F$ of class $\beta$ is of
    this form. Next, \cite[Proposition 1.4, Lemma 3.8]{no} proves that every curve in $F$ of class $\beta$ is as worst
    nodal. If a fibre $E_{S,[\ell]}$ of $E_S$ over $[\ell]$ maps to a nodal curve under $\phi$, then the preimages of
    the node are two distinct points of $E_{S,[\ell]}$. These correspond to two distinct planes, tangent  along $\ell$
    and having the same residual line $\ell'$. But the only case in which this can happen is if $\ell=\ell'$ and these
    planes are triply tangent to the line $[\ell]$. This implies that $[\ell]\in C=S\cap V$ and the two planes
    correspond to the two distinct common roots of the quadrics $Q_0,Q_1$ of Equation \eqref{eq:Q01}. In this case, the
    image $\phi(E_{S,[\ell]})$ has exactly one node, at the point $[\ell]$. Alternatively, one could also argue that the
    images of these curves have at most one node as in \cite[\S 1.4]{no}, which implies that every curve of class
    $\beta$ is either a smooth rational curve or a plane cubic, hence is of arithmetic genus 0 or 1-nodal of arithmetic
    genus 1.

    In other words, singular curves in $F$ of class $\beta$ are all 1-nodal and rational, and are images of fibres of
    $E_S$ over the points of $C=S\cap V$ where the $Q_0, Q_1$ have two distinct common roots. As discussed above, these
    correspond exactly to the nodes of $C$ and so Theorem \ref{theo:nodesC} gives that there are 3780 of them.
\end{proof}

To make the geometry of the above count more explicit: each nodal rational curve of class $\beta\in\HH^2(F,\ZZ)$
corresponds to a singularity of the curve $C=S\cap V\subset F$, in the sense that it is the image of $\PP^1\cong
E_s\subset\tF$ for $s\in C_{\mathrm{sing}}$ under the Voisin map $\phi:\tF\to F$. In turn, each such singular point of
$C$ corresponds to a triple type two line which has two distinct triple tangent $2$-planes and these can be counted
intersection-theoretically.

\begin{remark}
    For general $X$, \cite[Lemmas 3.7-3.9]{no} prove that for $\Sigma\subset\CH_\beta(F)$ the locus of
    irreducible curves of arithmetic genus 1, every singular member is 1-nodal and that the morphism $\sigma:\Sigma\to\PP^1$
    taking a curve to its $j$-invariant is of degree 3780. In particular, the fibre over the boundary point at infinity
    corresponds to the number of 1-nodal rational curves in $F$ of class $\beta$, which as discussed in the proof above
    is the image of a line $E_{S,[\ell]}$ for $[\ell]\in S\cap V$. Theorem \ref{thm:nodalcount} thus proves that
    $\sigma$ is unramified at infinity.
\end{remark}

\section{Geometry of the Ramification Locus}\label{sec:ramloci}

As above, we work with a general cubic fourfold $X\subset\PP^5$. We begin with some basic facts stemming from the
geometric description of the ramification points of the $g^1_3$'s on the curves $C_x$ given in Proposition \ref{prop:ram
pts} and the discussion following that. As in the introduction, we define the following loci, recalling that $W'\subset
X$ was defined in \eqref{eq:W'} as the locus of points $x$ whose corresponding curve $C_x$ has worse than 1-nodal
singularities.

\begin{definition}\label{def:RN}
Denote by \[R, R'\subset I\] the closures of the locus of all ramification points and the residual to all ramification
points respectively, of one of the possibly two $g^1_3$ of the family of curves $p: I\to X$ restricted to $X\backslash
W'$. Denote also by $N$ the closure of the locus of triple ramification points.
\end{definition}

Note that the locus $R$ contains the locus $q^{-1}(S)$ as can be seen from the following. On a family of smooth
curves with a family of $g^1_3$'s, if we degenerate to a nodal curve, two of the ramification points ``converge" to the
node (as for example can be seen by the theory of admissible covers). A point $t\in R$ lying on the smooth part of the
curve $C_x$, with $x\in X\backslash W'$, is a ramification point of one of the (possibly) two $g^1_3$ that $C_x$
possesses. As explained at the end of Section \ref{sec:curve of lines}, if $t'\in C_x$ is the residual to $t$ in the
above $g^1_3$, then $x\in \ell_t\cap \ell_{t'}$, the 2-plane $\Pi $ spanned by $\ell_t$ and $\ell_{t'}$ is tangent to
$X$ along $\ell$ and then $\ell_{t'}$ is the line which is residual to $\ell_t$ in the intersection of $\Pi$  with $X$.
In particular, a point of $t\in N$ lying on a curve $C_x$ as above corresponds to a triple tangent plane $\Pi$ to $X$
along $\ell_t$. Note also that by the discussion following Corollary \ref{cor:2to1}, a node $t$ of $C_x$ corresponds to
a line $\ell_t$ of second type and (in general) a pair of tangent planes to $X$ along $\ell_t$ so that the two residual
lines to $\ell_t$ in the intersection of those planes with $X$ intersect $\ell_t$ at the point $x$. 

\begin{lemma} \label{lem:NV} 
For $q: I \to F$ the projection and $V\subset F$ the locus of triple lines, we have \[q^{-1}(V) = N \subset I.\] 
\end{lemma}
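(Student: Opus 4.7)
The plan is to establish both inclusions using the geometric dictionary between ramification points of the $g^1_3$'s on $C_x$ and residual lines in tangent $2$-planes supplied by Proposition \ref{prop:ram pts}.

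For the inclusion $N \subseteq q^{-1}(V)$, I would start from a general point $t$ of $N$, namely a genuine triple ramification point on a smooth curve $C_x$ with $x \in X \setminus W'$. Specialising Proposition \ref{prop:ram pts} to the degenerate case $t_1 = t_2 = t_3 = t$, the $2$-plane $\Pi$ tangent to the cone of lines through $x$ at the line $\ell_t$ meets $X$ in a line through $x$, which must be $\ell_t$ itself. By B\'ezout this forces $\Pi \cap X = 3\ell_t$, so Definition \ref{triplelinelocus} gives $[\ell_t] \in V$ and hence $t \in q^{-1}(V)$. Taking closures in $I$ yields $N \subseteq q^{-1}(V)$.

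For the reverse inclusion I would use that $q : I \to F$ is a $\PP^1$-bundle, so $q^{-1}(V)$ is irreducible of dimension $3$, and the open $q^{-1}(V \setminus S) = q^{-1}(V) \setminus q^{-1}(V \cap S)$ is dense in $q^{-1}(V)$, since $V$ is an irreducible surface (Theorem \ref{thm:tVconn}) and $V \cap S = C$ is a curve (Corollary \ref{lem:classCgenustC}). For $[\ell] \in V \setminus S$ the line $\ell$ is of first type, so by Lemma \ref{lem:sings Cx} the point $[\ell]$ is smooth on $C_x$ for every $x \in \ell \setminus W$, hence for all but finitely many $x \in \ell$. The $2$-plane $\Pi$ with $\Pi \cap X = 3\ell$ witnessing $[\ell] \in V$ depends only on $\ell$, not on $x$, so applying Proposition \ref{prop:ram pts} with $t_1 = t_2 = t_3 = [\ell]$ identifies $[\ell]$ as a triple ramification point of one of the $g^1_3$'s on $C_x$ simultaneously for all such $x$. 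Thus $(x, \ell) \in N$ for a cofinite subset of $\ell$, and closure in $I$ forces the entire fibre $q^{-1}([\ell])$ into $N$. Running this over the dense open $V \setminus S$ and closing up gives $q^{-1}(V) \subseteq N$.

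The main subtlety is the behaviour over $V \cap S$, where $[\ell]$ is singular on $C_x$ (Lemma \ref{lem:sings Cx}) and Proposition \ref{prop:ram pts} does not apply on the nose. This is absorbed by the density and closure step in the reverse inclusion, which transports the triple-ramification property from the first-type locus $V \setminus S$ to all of $q^{-1}(V)$ at one stroke, so no separate analysis of the second-type triple lines is required.
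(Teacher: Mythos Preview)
Your proof is correct and follows essentially the same route as the paper's: both arguments use Proposition~\ref{prop:ram pts} to identify the open triple ramification locus with $q^{-1}(V)\cap p^{-1}(X\setminus W')$, then invoke the irreducibility of $V$ (Theorem~\ref{thm:tVconn}) together with the fact that a generic $[\ell]\in V$ is of first type to run a density-and-closure argument. The only organisational difference is that the paper argues directly that $q^{-1}(V)\cap p^{-1}(W')$ cannot contain a dense open of $q^{-1}(V)$, whereas you pass first through the dense open $q^{-1}(V\setminus S)$; your unnecessary extra hypothesis that $C_x$ be smooth (rather than just $x\notin W'$) and your citation of Corollary~\ref{lem:classCgenustC} are harmless but not needed.
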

\begin{proof}
As the intersection of $q^{-1}(V)$ with the family of curves $p: I\to X$ restricted to $X\backslash W'$ is the triple
ramification locus from Proposition \ref{prop:ram pts}, we certainly have that $N\subseteq q^{-1}(V)$. We now prove the
reverse inclusion $N\supseteq q^{-1}(V)$. From Theorem \ref{thm:tVconn}, $V$ is irreducible. Hence, $q^{-1}(V)$ is not
contained in $N$ only if its intersection with $p^{-1}(W')$ contains an open set of $q^{-1}(V)$. But this cannot happen
because $V\neq S$ so the generic point $[\ell]\in V$ represents a line of first type, which in turn implies that
$p(q^{-1}[\ell])$ intersects $W'$ at most at finitely many points. 
\end{proof}

We recall that $I =\{ (p,\ell), p\in \ell \} \subset X \times F$ and that 
\[\tF=\{ (\ell, \Pi ), \Pi \mbox{ tangent to } X \mbox{ along } \ell \} \subset F \times G(3,6).\]
The pullback of the family $I$ from $F$ to $\tilde{F}$, under the
blow up $\pi:\tilde{F}\to F$ of $S$, is given by $\tilde{I}=\{ (p, (\ell, \Pi )), p \in \ell\} \subset X \times
\tilde{F}$. Let $\tilde{\pi}: \tilde{I} \to I$, with  $(p, (\ell, \Pi )) \to (p, \ell)$ be the natural
morphism. Note that by identifying $\tF$ with the graph of the Voisin map, we may represent the points of $\tilde{I}$
by tuples $(p, (\ell, \ell', \Pi)) $, with $(\p, (\ell, \Pi))$ as above and $\ell'$ the residual line to $\ell $ in the
intersection of $\Pi$ with $X$. Define now 
\[\tilde{R} = \{ (p, (\ell, \ell', \Pi ))\in \tilde{I}, \, p \in \ell \cap
\ell'\}\]
and consider the diagram  
\begin{align}\label{eq:tR}
\begin{split}
\xymatrix{
   \tilde{R}\ar@{^{(}->}[r] & \tilde{I}\ar[d]_{\tilde{\pi}} \ar[r]^-{\tilde{q}} & \tilde{F}\ar[d]^\pi \\
    R \ar@{^{(}->}[r] & I\ar[r]^q  & F.
}
\end{split}
\end{align}

\begin{theorem}\label{thm:Bl=R}
    We have that 
    \begin{enumerate}
        \item $\Bl_{\tilde{V}}\tilde{F}\cong \tilde{R}$, 
        \item $\tilde{\pi}(\tilde{R})\subset R$ and
        \item the induced map $\tilde{\pi}:  \tilde{R}\to R$ is generically two-to-one over $q^{-1}(S)$ and is
        an isomorphism otherwise. 
    \end{enumerate}
    As a consequence, $R$ is irreducible with normalisation $\tilde{R}$, and is smooth outside $q^{-1}(S)$ over
    which the normalisation is generically two-to-one.
\end{theorem}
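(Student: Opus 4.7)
The plan is to establish the three assertions (1)--(3) in order and then deduce the consequence. For part (1), I would realize $\tilde{R}$ as the closure in $\tilde{I}=\PP(\pi^*\cU_F)$ of the graph of the rational section $\sigma_0:\tilde{F}\setminus\tilde{V}\dashrightarrow\tilde{I}$ sending $(\ell,\ell',\Pi)$ to $(\ell\cap\ell',\ell,\Pi)$; this is regular off $\tilde{V}$, while over $\tilde{V}$ (where $\ell=\ell'$) the entire fiber $\PP^1$ of $\tilde{I}\to\tilde{F}$ satisfies the incidence condition and lies in $\tilde{R}$. To identify $\tilde{R}$ with $\Bl_{\tilde V}\tilde{F}$, I would (a) verify that $\tilde{R}$ is smooth and irreducible---irreducibility is immediate from birationality to $\tilde F$ via $\sigma_0$, and smoothness can be checked by a local computation in the style of Proposition~\ref{prop:tcFtcVsmooth}---and (b) verify the universal property of the blowup. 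For (b), the factorization $G|_\Pi=\ell^2\cdot\ell'$ globalises to a universal linear form on the tautological plane cutting out $\ell'$; restriction to the tautological line $\ell$ gives a section of a line bundle on $\tilde{I}$ whose vanishing locus is $\tilde{R}$ as a scheme, and whose pullback generates an invertible ideal on $\tilde{R}$ (the section vanishes identically on each fiber $\ell$ over $\tilde{V}$, but only simply elsewhere). By the universal property this yields a morphism $\tilde{R}\to\Bl_{\tilde V}\tilde{F}$ over $\tilde{F}$; matching fibers over $\tilde{V}$ (both are $\PP^1$-bundles) then uses Proposition~\ref{prop:normalbundletV} together with $\PP(B|_{\tilde V})\cong\PP(\cU_F|_{\tilde V})$ (a twist identity valid for rank-$2$ bundles) to complete the isomorphism.

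Parts (2) and (3) are then essentially geometric and follow from the setup of Section~\ref{sec:curve of lines}. For (2), a point $(p,(\ell,\ell',\Pi))\in\tilde{R}$ with $p\in X\setminus W'$ satisfies the hypothesis of Proposition~\ref{prop:ram pts}, so $[\ell]$ is a ramification point of one of the two $g^1_3$ on $C_p$ with residual $[\ell']$; hence $\tilde\pi(p,(\ell,\ell',\Pi))=(p,\ell)\in R$. Since $\tilde R$ is irreducible and this holds on a dense open, $\tilde\pi(\tilde R)\subseteq R$. For (3), over $F\setminus S$ every line is of first type with a unique tangent plane, so $\pi$ and therefore $\tilde\pi$ is an isomorphism over the complement of $q^{-1}(S)$; over $q^{-1}(S)$, the fiber of $\pi$ at $(p,\ell)$ is $E_{S,[\ell]}\cong\PP^1$, and $\tilde{R}$ picks out those tangent planes whose residual line passes through $p$. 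By Corollary~\ref{cor:2to1}---equivalently, by counting common zeros of the quadrics $Q_0,Q_1$ in \eqref{eq:Q01}---this is generically two planes.

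The consequence is then immediate. By (1), $\tilde{R}$ is smooth and irreducible; by (3), the proper morphism $\tilde\pi:\tilde{R}\to R$ is surjective and birational (an isomorphism on a dense open), so $R$ is irreducible and $\tilde{R}$ is its normalization. Normalization is an isomorphism over the smooth locus of its target, so $R$ is smooth away from $q^{-1}(S)$, while the generic two-to-one behaviour over $q^{-1}(S)$ forces $R$ to have two sheets meeting along that divisor, which is exactly the statement of non-normality there resolved by $\tilde{R}$.

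The main obstacle is the universal-property step in part~(1): although it is heuristically clear that $\tilde{R}$ is a blowup given the fiber structure, rigorously controlling the order of vanishing of the residual-line section along $\tilde{V}$ inside the $\PP^1$-bundle $\tilde{I}$ takes some care, as does the bookkeeping of line-bundle twists needed to identify $\PP(N_{\tilde V/\tilde F})$ with the ruled structure of $\tilde R$ over $\tilde V$. Once this is established, parts (2), (3) and the consequence are bookkeeping against results already collected in the paper.
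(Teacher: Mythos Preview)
Your treatment of parts (2), (3) and the consequence matches the paper's: both invoke Proposition~\ref{prop:ram pts} for (2) and Corollary~\ref{cor:2to1} for the two-to-one behaviour in (3), and the deduction of irreducibility, normalisation, and smoothness away from $q^{-1}(S)$ is the same.

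For part (1) the paper proceeds differently and thereby avoids the obstacle you flag. Instead of verifying the universal property of the blowup, it works first over the parameter space $\cG$: the incidence locus $\tilde{\cR}_\cG=\{(p,(\ell,\ell',\Pi)):p\in\ell\cap\ell'\}\subset\PP^5\times\cG$ is shown to be smooth and irreducible by analysing its fibres over $\PP^5$, each of which is an iterated $\PP^1$-bundle over a Schubert variety isomorphic to $\G(2,5)$. Pulling back to $|\OO_{\PP^5}(3)|\times\cG$ and applying generic smoothness to the projection onto $|\OO_{\PP^5}(3)|$ gives that $\tilde{R}=\tilde{R}_X$ is smooth for general $X$; no local computation on a fixed cubic is needed. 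The identification with $\Bl_{\tilde V}\tilde F$ is then obtained from a recognition criterion \cite[p.~604, item 4]{gh}: a birational morphism from a smooth variety whose exceptional locus is a $\PP^1$-bundle over a smooth codimension-two centre is necessarily the blowup along that centre. This sidesteps both the order-of-vanishing analysis and the line-bundle twist bookkeeping you mention. Your route via the universal property and a direct local smoothness check is in principle workable, but the paper's combination of universal families plus the recognition criterion handles smoothness and the blowup identification in one stroke, and is what makes the argument go through cleanly for \emph{general} rather than merely \emph{some} $X$.
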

\begin{proof}
    Continuing with the notation established in the previous section, denote by $\mathrm{pr}_1: \cG \to \G(2,6)$ the projection
    $(\ell,\ell',\Pi)\to \ell$. Define also \[\tilde{\cI}_\cG =\PP(\mathrm{pr}_1^*(\cU_2))\subset \PP^5 \times \cG\] and
    its subset
    \[\tilde{\cR}_\cG=\{ (p,(\ell,\ell',\Pi)),\, p\in \ell\cap \ell' \} \subset \tilde{\cI}_\cG.\]
    Any two fibres of $\tilde{\cR}_\cG\to\PP^5$ are isomorphic, and over a point $p\in \PP^5$, this is $\{
    (\ell,\ell',\Pi), p\in \ell\cap \ell' \} \subset \cG$ and this fibre is itself a $\PP^1$-fibration over
    $\{(\ell,\Pi), p\in \ell \subset \Pi\}$, parametrising lines in a plane passing through a point. Finally
    $\{(\ell,\Pi), p\in \ell \subset \Pi\}$ is again a $\PP^1$-fibration over the Schubert cycle $\Sigma_3^p\subset
    \G(3,6)$ parametrising 2-planes through the point $p\in \PP^5$. Note now that if we fix a hyperplane
    $H=\PP^4\subset\PP^5$ not containing $p$, we obtain an isomorphism $\Sigma_3^p\cong \G(2,H)=\G(2,5)$ by taking a
    2-plane $\Pi$ containing $p$ and giving the line $\Pi\cap H$. Hence the image in $\G(3,6)$ is smooth as it is
    isomorphic to a Grassmannian itself. All in all, we obtain that $\tilde{\cR}_\cG$ is smooth and irreducible.

    The projection $\tilde{\cR}_\cG \to \cG$ is a birational morphism with exceptional locus a $\PP^1$-fibration over
    $\cG_{\Delta}$. Since $\tilde{\cR}_\cG$ is smooth, it is necessarily isomorphic to the blow up of $\cG$ at
    $\cG_{\Delta}$ (see \cite[p.604, item 4]{gh}).

    We now pull everything back to $\PP^5\times \tilde{\cF}$ via the projection $\mathrm{pr}=(\id, \mathrm{pr}_2)$ for
    $\mathrm{pr}_2:\tilde{\cF}\to\cG$. In other words, denote by
    \[
    \tilde{\cI} = \mathrm{pr}^{-1}(\tilde{\cI}_\cG) \subset \PP^5\times\tilde{\cF}
    \]
    and $\tilde{\cR}\subset\tilde{\cI}$ the inverse image of $\tilde{\cR}_\cG$. The locus $\tilde{\cR}$ is smooth and
    irreducible as it admits a smooth morphism over $\tilde{\cR_\cG}$ which is smooth and irreducible as we showed
    above. Note that for $X=\VV(G)\in|\OO_{\PP^5}(3)|$ we have that the restrictions over $X$ are
    \begin{align*}
    \tilde{\cI}|_X &= \tilde{I}_X, \\
    \tilde{\cR}|_X &= \tilde{R}_X,
    \end{align*}
    where $\tilde{I}_X$ is as in Diagram \eqref{eq:tR}, i.e., is the blowup of the universal family of lines $I_X$ over
    $X$ at the universal locus of type two lines $q^{-1}(S_X)$, and $\tilde{R}_X$ a subvariety of it, both denoted with
    the subscript $X$ to distinguish between the fibres.

    As $\tilde{\cR}$ is smooth irreducible, the fibre $\tilde{R}_X=\tilde{\cR}|_X$ over a general point
    $X\in|\OO(3)|$ is smooth. It is also birational to $\tilde{F}_X$, and over the locus $\tilde{V}_X$ it is codimension
    one since it is a $\PP^1$-fibration over it. Now the irreducibility of $\tilde{\cR}$ implies that a general fibre
    $\tilde{R}_X$ is equidimensional, so from the aforementioned birationality it must be irreducible. As the fibre of
    $\tilde{R}_X$ over $\tilde{V}_X$ is a line $\PP^1$, and it is otherwise isomorphic to $\tilde{F}_X$, we obtain from
    \cite[p.604, item 4]{gh} that 
    \begin{align}\label{eq:RX=Bl}
        \tilde{R}_X \cong \Bl_{\tilde{V}_X}(\tilde{F}_X).
    \end{align}

    By construction \[\tilde{R}_X=\{(p, (\ell,\ell',\Pi)), p\in \ell\cap\ell'\} \subset X \times \tF_X.\] 
    By what we discussed after Proposition \ref{prop:ram pts} and Definition \ref{def:RN}, when $p\notin W'$ (i.e., the
    generic case), then $(p,(\ell,\ell',\Pi))$ maps to a general point of $R$. Hence from the irreducibility of
    $\tilde{R}_X$, we get a surjection $\tilde{\pi} : \tilde{R}_X \to R_X$. Over the complement of $q^{-1}(S_X)$ it is an
    isomorphism since there the map $\tilde{I}_X \to  I_X$ is an isomorphism. Over $q^{-1}(S_X)$ it is generically two to
    one from Corollary \ref{cor:2to1}.
\end{proof}

\begin{remark}
    In fact it is not hard to see, e.g., using \cite[Proposition IV-21]{eh} that the isomorphism of Equation
    \eqref{eq:RX=Bl} holds globally, i.e., that $\tilde{\cR}\cong\Bl_{\tilde{\cV}}(\tilde{\cF})$.
\end{remark}

By composing the blowup map with the projection to $X$, we obtain
\[\xymatrix{\Bl_{\tilde{V}}\tilde{F}\ar[r]&\tilde{I}\ar[r]& I\ar[r]& X.}\]
Outside the exceptional divisor, the first morphism sends a point $(\ell,\ell',\Pi)$, so that $X\cap\Pi=2\ell+\ell'$, to
the point in $\tilde{I}$ in the fibre over $[\ell]$ which is the point of intersection with $\ell'$. On the other hand,
it identifies the points in the exceptional divisor in the fibre over $(\ell,\ell,\Pi)$ with the points of $\ell$.

\begin{corollary}\label{cor:resolvepsitilde}
    The map $\psi:\tilde{F}\dashrightarrow X$ is resolved by blowing up $\tilde{V}$, and the resolution morphism is
    given by the above composition.
\end{corollary}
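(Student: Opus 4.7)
The plan is to deduce the corollary directly from Theorem \ref{thm:Bl=R} together with the explicit description of the composition $\Bl_{\tilde V}\tilde F\to\tilde I\to I\to X$ given in the paragraph preceding the statement. Under the isomorphism $\Bl_{\tilde V}\tilde F\cong\tilde R$ established in Theorem \ref{thm:Bl=R}(1), the composition becomes the restriction to $\tilde R\subset \tilde I$ of the natural projection $\tilde I\to X$, which on a point $(p,(\ell,\ell',\Pi))$ simply outputs $p$.

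First I would verify that, away from the exceptional divisor, this composition agrees with $\psi$. Over the open locus $\tilde F\setminus\tilde V$ the blowup map $\Bl_{\tilde V}\tilde F\to\tilde F$ is an isomorphism, and by Theorem \ref{thm:Bl=R}(3) the map $\tilde{\pi}:\tilde R\to R$ is also an isomorphism there; so a point $(\ell,\ell',\Pi)\in\tilde F\setminus\tilde V$ (equivalently $\ell\neq\ell'$) corresponds to the unique point $(p,(\ell,\ell',\Pi))\in\tilde R$ with $p=\ell\cap\ell'$, which is precisely $\psi(\ell,\Pi)$ by the definition \eqref{diag:defpsi} of $\psi$. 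Hence the composition restricts to $\psi$ on the complement of $\tilde V$, and in particular extends $\psi$ to a morphism on the blowup.

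Next I would check that the exceptional divisor of $\Bl_{\tilde V}\tilde F\to\tilde F$ is sent onto $X$ in the manner described. By the construction of $\tilde R$, the fibre of $\tilde R\to\tilde F$ over $(\ell,\ell,\Pi)\in\tilde V$ is $\{(p,(\ell,\ell,\Pi)):p\in\ell\}\cong\ell\cong\PP^1$. Under the isomorphism with $\Bl_{\tilde V}\tilde F$ this $\PP^1$ is the fibre of the exceptional divisor over the point $(\ell,\Pi)\in\tilde V$ (note $\tilde V$ is smooth of codimension two in $\tilde F$ by Lemma \ref{lem:Vtildesmooth}, Theorem \ref{thm:tVconn}, Proposition \ref{prop:normalbundletV}, so the exceptional divisor is indeed a $\PP^1$-bundle over $\tilde V$), and the map to $X$ identifies this fibre with $\ell\subset X$, matching the geometric description stated just before the corollary.

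The only mild subtlety, which is not really an obstacle, is to be sure that the identification of $\Bl_{\tilde V}\tilde F$ with $\tilde R$ supplied by Theorem \ref{thm:Bl=R}(1) really intertwines the blowdown $\Bl_{\tilde V}\tilde F\to\tilde F$ with the first projection $\tilde R\to\tilde F$; this is already built into the proof of that theorem (both maps have exceptional locus a $\PP^1$-bundle over $\tilde V$ and agree as birational morphisms). Granting this, the two items above show that the morphism $\Bl_{\tilde V}\tilde F\to X$ agrees with $\psi$ on a dense open and is defined everywhere, so it resolves the indeterminacy of $\psi$, as claimed.
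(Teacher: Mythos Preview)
Your proposal is correct and follows essentially the same approach as the paper, which treats the corollary as immediate from Theorem \ref{thm:Bl=R} and the explicit description of the composition in the preceding paragraph. You have simply spelled out the verification that the paper leaves implicit: the composition is a morphism agreeing with $\psi$ on the dense open $\tilde F\setminus\tilde V$, hence resolves it.
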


From the proposition above, the projection $q: R \to F$ is birational, thus $R$ is a birational model of $F$ inside $I$.

\begin{lemma}
    Let $X\subset\PP^5$ be a general cubic fourfold. The map $\psi: F \dashrightarrow X$ defined in Diagram
    \eqref{diag:defpsi} is a dominant rational map of degree $24$.
\end{lemma}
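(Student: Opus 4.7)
The plan is to compute the degree of $\psi$ by counting the preimage of a general point $x\in X$ geometrically, using the interpretation of fibres of $\psi$ in terms of ramification points of the $g^1_3$'s on the curve $C_x$.

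First I would verify that $\psi$ is defined and generically finite on $F\setminus(S\cup V)$, so that dominance will follow automatically from the finiteness of the preimage of a general point (since $\dim F=\dim X=4$). Pick a general $x\in X$. By Corollary \ref{cor:general}, $C_x$ is a general smooth non-hyperelliptic curve of genus $4$; in particular, its canonical model lies on a smooth quadric in $\PP^3$ and it carries exactly two distinct $g^1_3$'s, namely the pencils $|\mathfrak g|$ and $|\mathfrak g'|$ cut out by the two rulings. Since $x$ is general, we may further assume $x\notin W'$ so that Proposition \ref{prop:ram pts} applies at $x$.

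Next, I would identify the fibre $\psi^{-1}(x)$ explicitly. A point $[\ell]\in \psi^{-1}(x)$ corresponds to $\ell\subset X$ with $x\in\ell$ and with residual line $\ell'$ meeting $\ell$ at $x$; equivalently, writing $\ell=\ell_t$ and $\ell'=\ell_{t'}$ for $t,t'\in C_x$, the $0$-cycle $2t+t'$ must be a fibre of one of the two $g^1_3$'s on $C_x$ by Proposition \ref{prop:ram pts}. In other words, $t$ is a ramification point of either of the degree-$3$ covers $C_x\to\PP^1$ induced by $|\mathfrak g|$ or $|\mathfrak g'|$, and conversely each such ramification point $t$ determines a unique residual $t'$ and hence a unique $[\ell]=[\ell_t]\in\psi^{-1}(x)$. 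Moreover, since $[\ell]\mapsto [\ell]$ is injective on $C_x\subset F$, distinct ramification points yield distinct elements of $\psi^{-1}(x)$.

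It then remains to count these ramification points. Applying Riemann--Hurwitz to each degree-$3$ morphism $C_x\to\PP^1$,
\begin{equation*}
2\cdot 4-2=3\cdot(-2)+\deg R,
\end{equation*}
so $\deg R=12$ for each $g^1_3$. For a general genus $4$ curve all $12$ ramification points of each $g^1_3$ are simple (none are triple, since triple ramification is a codimension-one condition in $\mathcal M_4$, avoided by genericity of $C_x$ by Corollary \ref{cor:general}), and the two sets of $12$ ramification points are disjoint for a generic curve (the locus of genus-$4$ curves admitting a ramification point shared by the two $g^1_3$'s is again a proper closed subset of $\mathcal M_4$). Hence $|\psi^{-1}(x)|=2\cdot 12=24$, proving both that $\psi$ is dominant and that its degree equals $24$.

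The only non-routine step is the genericity argument: I must ensure both that all $24$ ramification points are distinct and simple for the generic fibre $C_x$, and that no degeneration artificially reduces the count. This is handled by Corollary \ref{cor:general}, which guarantees the moduli point $[C_x]\in\overline{\mathcal M}_4$ is general for $x$ general in $X$, so we may avoid the loci of curves with coincident or non-simple $g^1_3$-ramification.
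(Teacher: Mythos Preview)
Your proposal is correct and follows essentially the same approach as the paper: interpret $\psi^{-1}(x)$ via Proposition \ref{prop:ram pts} as the set of ramification points of the two $g^1_3$'s on $C_x$, and invoke genericity of $C_x$ to get $24$ distinct simple ramification points. Your write-up is somewhat more detailed (spelling out Riemann--Hurwitz and the disjointness of the two ramification sets), but the argument is the same.
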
 
\begin{proof}
    Let $x\in X$ be a general point. We want to count the number of pairs $(\ell,\ell')$ of distinct lines
    through $x$ such that the 2-plane spanned by $\ell,\ell'$ is tangent to $\ell$. From Proposition \ref{prop:ram pts}
    the number of such pairs is exactly the number of ramification points of the two $g^1_3$'s on $C_x$, which is $24$.
    Each contributes a distinct point as the ramification is simple at all points since the curve $C_x$ is
    general from Proposition \ref{prop:properties C_x}.
\end{proof} 

We now compute the classes of $R, R'$ and $N$ in the Chow group of $I$. Given Proposition \ref{prop:properties C_x},
perhaps the most natural way to do this is via admissible covers on the moduli space of curves, which we include as
Appendix \ref{sec:admissible covers} and is independent of the proof below (one uses Lemma \ref{lem:intersection theory}
to write these classes in terms of $q^*H_F, l$), but does not recover the class of $N$. 

\begin{proposition}\label{prop:classR}
    The classes of $R, R'$ and $N$ in $I$ are as follows, for $l=\mathrm{c}_1(\OO_I(1))$ the tautological class
    \begin{align*}
        [R]&=4q^*H_F+l,\\
        [R']&=4q^*H_F+16l,\\
        [N]&= 4l^2 -4lq^*H_F + 25q^*\mathrm{c}_2(\cU_F).
    \end{align*}
\end{proposition}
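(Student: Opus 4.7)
The strategy is to exploit that $I = \PP(\cU_F)$ is a projective bundle over $F$ and that $\Pic(F) = \ZZ H_F$ for general $X$, so $\Pic(I) = \ZZ q^*H_F \oplus \ZZ l$, and the divisor classes $[R], [R']$ are determined by their pairings with two natural curve classes: a fibre $[C_x] = p^{-1}(x)$ of $p$ and a fibre $[q^{-1}([\ell])]$ of $q$. Using Lemma \ref{lem:intersection theory}, $q^*H_F \cdot [C_x] = 6$, $l \cdot [C_x] = 0$, $q^*H_F \cdot [q^{-1}([\ell])] = 0$, and $l \cdot [q^{-1}([\ell])] = 1$, so writing $[R] = a q^*H_F + bl$ reduces the problem to determining $\deg(p|_R)$ and $\deg(q|_R)$, and analogously for $R'$. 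The class $[N]$, being codimension two, is computed instead by direct pullback via Lemma \ref{lem:NV}.

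For $[R]$, the first coefficient counts ramification points on a generic fibre of $p$: by Corollary \ref{cor:general} this fibre is a smooth non-hyperelliptic genus four curve carrying two $g^1_3$'s (the rulings of the defining quadric), and each $g^1_3$ contributes $12$ simple ramification points via Riemann--Hurwitz, so $\deg(p|_R) = 24$ and $a = 4$. For the second, Theorem \ref{thm:Bl=R} identifies $\tilde R \cong \Bl_{\tV}\tF$ as a birational model of $F$, hence $q : R \to F$ is birational and $b = 1$, giving $[R] = 4q^*H_F + l$.

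For $[R']$, the same ramification count in each fibre gives $\deg(p|_{R'}) = 24$ and $a' = 4$. For $b'$, observe that a general fibre $q^{-1}([\ell]) \cap R'$ consists of those $(x, [\ell])$ where $\ell$ is the residual line in some $2$-plane $\Pi$ tangent to $X$ along a different line $\ell''$ meeting $\ell$ at $x$; such tuples $([\ell''], \Pi)$ are precisely the preimages of $[\ell]$ under the Voisin map $\phi$, which has degree $16$, each yielding a distinct $x = \ell \cap \ell''$, so $b' = 16$ and $[R'] = 4q^*H_F + 16l$.

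For $[N]$, Lemma \ref{lem:NV} identifies $N = q^{-1}(V)$ set-theoretically; since $V$ is reduced (Corollary \ref{cor:classV}) and $q$ is a $\PP^1$-bundle, the preimage is also reduced and $[N] = q^*[V]$. Combining with Theorem \ref{thm:class of V} yields $[N] = 21\, q^*\mathrm{c}_2(\cU_F)$, and the form stated in the proposition follows by rewriting modulo the tautological relation of Lemma \ref{lem:intersection theory}(6). The main obstacle throughout is supplying the two degree-over-$F$ statements: the birationality of $R \to F$ is the deeper content of Theorem \ref{thm:Bl=R}, while the degree $16$ of the Voisin map is an appeal to the structure of $\phi$ established earlier in the paper and the bijection between ramification points of the fibrewise $g^1_3$'s and triples $([\ell''], \Pi, x)$ with $x \in \ell \cap \ell''$ and $\phi([\ell''])=[\ell]$.
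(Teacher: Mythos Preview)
Your approach is correct and genuinely different from the paper's. For $[N]$ you agree with the paper exactly: both pull back $[V]=21\mathrm{c}_2(\cU_F)$ via Lemma~\ref{lem:NV} and rewrite with the Grothendieck relation. For $[R]$ and $[R']$, however, you use a test-curve argument---pairing the unknown class against the two natural curve classes $[C_x]$ and $[q^{-1}([\ell])]$---while the paper works upstairs on $\tF$: it embeds both $\tilde{I}=\PP(\pi^*\cU_F)$ and $\tilde{I}'=\PP(\tilde\phi^*\cU_F)$ into the rank-$3$ bundle $\PP(\pi_3^*\cU_3)$, computes $[\tilde R]=j_1^*j_{2*}[\tilde I']$ and $[\tilde R']=j_2^*j_{1*}[\tilde I]$ via the sub-bundle intersection formula \cite[Proposition~9.13]{3264}, and then pushes forward to $I$ using Lemma~\ref{lem:huyamer} together with the Voisin-pushforward identities from \cite{svfourier} and \cite{osy}. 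Both routes ultimately feed in $\deg\phi=16$ (the paper needs it for $\tilde\phi_*\mathrm{c}_1(\OO_{\tilde I'}(1))=16l$, you need it for the fibre count of $q|_{R'}$), and both need the birationality of $q|_R$ from Theorem~\ref{thm:Bl=R}. Your argument is more elementary and transparent; the paper's is more uniform and yields the formulas on $\tilde I$ before pushdown.

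Two points to tighten. First, $\Pic(F)=\ZZ H_F$ holds for \emph{very general} $X$, not general; as written your argument only determines $[R],[R']$ modulo $q^*(\Pic(F)/\ZZ H_F)$. You should either restrict to very general $X$ and then invoke the specialisation argument used in the proof of Lemma~\ref{lem:intersection theory}(8)--(9), or note that the relevant classes are defined on the universal family so the identity propagates. Second, the assertion that the $16$ preimages $[\ell'']$ of a general $[\ell]$ under $\phi$ yield $16$ \emph{distinct} points $\ell\cap\ell''$ on $\ell$ deserves a sentence: it amounts to the birationality of the map $\tF\dashrightarrow R'$, $(\ell'',\Pi)\mapsto(\ell''\cap\ell',[\ell'])$, which follows because on a general $C_x$ a general point is the residual of at most one ramification point across both $g^1_3$'s.
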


\begin{remark}
    We note that using the computation of the class $[R']$ as a consequence of Appendix \ref{sec:admissible covers} one
    recovers that the degree of the Voisin map satisfies
    \[\deg\phi = [R']q^*[\ell] = 16.\]
    In an earlier version of this paper we computed the classes of $R,R'$ and $N$ (and as a consequence the class of
    $V$) by avoiding the techniques of Section \ref{sec:surface V} and instead doing intersection theory on a partial
    resolution of singularities $J$ of $I\times_X I$, and in particular interpreting $R$ as the image of a natural
    degeneracy locus in $J$ - i.e., this method constructed a natural geometric correspondence over $(F,X)$ inside
    $I\times_X I$. This involved various technicalities to step around the locus $W'\subset X$
    parametrising the worse than 1-nodal fibres of $p:I\to X$, so in this version of the paper we chose instead to
    construct $\tilde{R}$ via the universal family in Proposition \ref{thm:Bl=R}, and then to resort to computations of
    \cite{amerik, svfourier, osy} for the following proof of Proposition \ref{prop:classR}.
\end{remark}

\begin{proof}[Proof of \ref{prop:classR}]
    The computation of the class of $N$ follows from Theorem \ref{thm:class of V} and Lemmas \ref{lem:NV},
    \ref{lem:intersection theory}. For the other two classes, we follow closely the notation of Theorem \ref{thm:class
    of V} and Proposition \ref{thm:Bl=R}. 

    The variety  $\cG=\{ (\ell,  \ell',\Pi) : \ell', \ell\subset\Pi \}  \subset
    \G(2,6)  \times \G(2,6) \times\G(3,6)$ maps to  ${\mathcal D} =\{ (\ell, \Pi), \ell \subset \Pi \}$  in two ways
    $\alpha_1$, $\alpha_2$, by mapping $(\ell, \ell',\Pi)$ to  $(\ell, \Pi)$ or to $(\ell',\Pi)$ respectively.  We also
    denote by $\alpha : \cG \to \G(3,6)$ the projection to the third factor.  We now consider the one-to-one morphism
    $\iota: \tF \to \cG$.  The inclusion $\gamma^*\cU_2\hookrightarrow \beta^*\cU_3$ in the exact sequence
    \eqref{eq:tautol_X} sitting over ${\mathcal D}$ yields, for $i=1,2$, inclusions
    $\iota^*\alpha_i^*\gamma^*\cU_2\hookrightarrow \iota^*\alpha^*\cU_3$ and thus an embedding
    \[j_i: P_i:=\PP (\iota^*\alpha_i^*\gamma^*\cU_2)\hookrightarrow P:=\PP(\iota^*\alpha^*\cU_3)\] 
    of bundles over $\tF$. Let $u:P\to \tF$ and $u_i=uj_i: P_i \to \tF$ be the natural maps.

    For $i,k\in\{1,2\}, i\neq k$, \cite[Proposition 9.13]{3264} gives 
    \begin{align}\label{eq:ji*}
    j_i^*j_{k*}[P_k] = \mathrm{c}_1({\mathcal O}_{P_i}(1))
    +u_i^*(\iota^*\alpha^*\mathrm{c}_1(\cU_3)-\iota^*\alpha_k^*\gamma^*\mathrm{c}_1(\cU_2))
    \end{align}
    on $P_i$.
  
    We now view  $\tF$ as the blow up of $F$ at $S$.  Under this point of view and recalling Diagram \eqref{eq:tR}, $P_1 =
    \tilde{I}:= \pi^*I$ and $P_2= \tilde{I}': =\phi^*I$.   We denote by $[\tilde{R}]=j_1^*j_{2*}[P_2]$ in $\tilde{I}$
    and by $[\tilde{R}']=j_2^*j_{1*}[P_1]$ in $\tilde{I}'$. The cycle $[\tilde{R}]$ in $\tilde{I}$ is geometrically the
    cycle representing the locus which on a  fibre $\ell$ of $\tilde{I}$ over a general point $(\ell, \Pi)$ of $\tF$ is
    the intersection point of $\ell$ with the residual line $\ell'$ in the plane $\Pi$. Thus,
    $\tilde{\pi}_*[\tilde{R}]=[R]$, with $\tilde{\pi}:\tilde{I} \to I$ the natural map corresponding to the blow up map
    $\pi: \tF \to F$.  Similarly, $\tilde{\phi}_*[\tilde{R}']=[R']$, with $\tilde{\phi}: \tilde{I}' \to I$ the natural
    map corresponding to the Voisin map $\phi: \tF \to F$. 
      
    We then have from \eqref{eq:ji*} and Lemma \ref{lem:huyamer}
    \begin{align*}
    [\tilde{R}]&=\mathrm{c}_1({\mathcal O}_{\tilde{I}}(1))+u_1^*\left((-3H+E_S)+(7H-3E_S)\right)\\
    &=\mathrm{c}_1({\mathcal O}_{\tilde{I}}(1))+u_1^*(4H-2E_S).
    \end{align*}
    By pushing forward to $I$ by the map $\tilde{\pi}$ we get $[R]=4q^*H_F+l$ as claimed.

    The computation for $[\tilde{R}']$ is similar. We have first from \eqref{eq:ji*}
    \begin{align*}
    [\tilde{R}'] &=\mathrm{c}_1({\mathcal O}_{\tilde{I}'}(1)) +u_2^*((-3H+E_S)+H)\\&=\mathrm{c}_1({\mathcal O}_{\tilde{I}'}(1))
    +u_2^*(-2H+E_S).
    \end{align*}
    By pushing forward to $I$ by the map $\tilde{\phi}$,  we get $\tilde{\phi}_*\mathrm{c}_1({\mathcal
    O}_{\tilde{I}'}(1))=16l$ as the degree of $v$ is 16. Finally, $\tilde{\phi}_*u_2^*H = 28q^* H_F$ (see
    \cite[Proposition 21.4]{svfourier}) and $\tilde{\phi }_*u_2^* E_S=60q^*H_F$ from \cite[Relation (18)]{osy}. We
    conclude $[R']=4q^*H_F+16l$.
\end{proof}

\appendix
\section{Admissible Covers}\label{sec:admissible covers} 
In this section we reinterpret the divisors $R, R'\subset I$ of Section \ref{sec:ramloci} using Hurwitz spaces, and
in particular obtain again some of the results of Proposition \ref{prop:classR}.

\begin{proposition}
\label{prop_classes}
Let $\lambda$ be the determinant of the Hodge bundle of the curve 
fibration $p: I \to X$. Then 
\begin{itemize}
    \item $[R]=4\omega_p+8p^*\lambda-p^*[W]$, 
    \item $[R']=4\omega_p+68p^*\lambda-8p^*[W]$.
\end{itemize}
\end{proposition}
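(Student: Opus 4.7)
The plan is to lift the genus-4 fibration $p:I\to X$ to a Hurwitz-type moduli setting where each $g^1_3$ is marked, compute the class of the ramification divisor there using standard tautological-class relations, and push everything back down to $I$. By Proposition \ref{prop:properties C_x}, outside a codimension $\geq 2$ locus of $X$, every fibre of $p$ is at worst 1-nodal of genus 4, hence represents a point of the Hurwitz space $\bar H_{4,3}$ of admissible triple covers once one of the two $g^1_3$'s is marked. Base-changing along the generically degree-2 map $\mu:\bar H_{4,3}\to\bar\cM_4$ through the moduli map $j$ (cf.\ Lemma \ref{lem:D3}) yields a double cover $\sigma:\tilde X\to X$; set $\tilde I=I\times_X\tilde X$ with $\tilde p:\tilde I\to\tilde X$, constructed on the pullback of a large open and extended across the bad codimension $\geq 2$ locus by taking closures.

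On $\tilde I$ the marked $g^1_3$ gives a universal admissible cover $f:\tilde I\to\cB$ with $\cB$ a nodal $\PP^1$-bundle over $\tilde X$. The ramification divisor $\tilde R_0\subset\tilde I$ pushes forward along $\tilde\sigma:\tilde I\to I$ to $R$, and has class
\[
[\tilde R_0]=\mathrm{c}_1(\omega_{\tilde p})-f^*\mathrm{c}_1(\omega_{\cB/\tilde X}),
\]
while the residual divisor $\tilde R_0'$ satisfies $2\tilde R_0+\tilde R_0'=f^*B$ for $B\subset\cB$ the universal branch divisor. Using Mumford's formula $12\lambda=\kappa_1+\delta_0$ pulled back to $\bar H_{4,3}$ together with the admissible-cover relations of Harris--Mumford, one expresses $f^*\mathrm{c}_1(\omega_{\cB/\tilde X})$ and $f^*B$ in terms of $\omega_{\tilde p}$, the Hodge class $\tilde p^*\tilde\lambda$, and the nodal-locus class $\tilde p^*[\tilde W]$; the coefficients are then pinned down by checking degrees on a smooth fibre (degree $12$) and on a generic 1-nodal fibre respectively.

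Finally, I would push forward via $\tilde\sigma:\tilde I\to I$, using $\tilde\sigma_*\omega_{\tilde p}=2\omega_p$, $\tilde\sigma_*\tilde p^*\tilde\lambda=2p^*\lambda$ and $\tilde\sigma_*\tilde p^*[\tilde W]=2p^*[W]$ (modulo contributions from the branch locus of $\sigma$, which lies in codimension $\geq 2$ for general $X$), to obtain the stated formulas; as a sanity check, applying Lemma \ref{lem:intersection theory} recovers the classes $4q^*H_F+l$ and $4q^*H_F+16l$ of Proposition \ref{prop:classR}. The main obstacle is the bookkeeping of boundary contributions: the ramification of $\sigma$ over the Maroni divisor in $\bar\cM_4$ where the two $g^1_3$'s merge, and the degeneration of $f$ at the nodes of $C_x$ for $x\in W$ (where a node of the fibre maps to a node of $\cB$); these together are precisely what produce the corrective $-p^*[W]$ and $-8p^*[W]$ terms with exactly the stated coefficients.
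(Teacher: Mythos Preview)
Your outline is in the right spirit—the paper also uses admissible covers—but the route differs and your sketch has a real gap.

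The paper never base-changes $X$. It computes directly on $\bar{\cM}_{4,1}$: writing $R=a\omega_\pi+b\pi^*\lambda+c\pi^*\delta_0$, it fixes $a=4$ from the fibre degree $24=4(2g-2)$, then determines $b,c$ by evaluating $\pi_*(\omega_\pi\cdot R)$ via the \emph{fully labelled} Hurwitz space $\tilde{\cH}_{4,3}$ (degree $N_0=2\cdot 12!$ over $\bar\cM_4$). The key inputs are the explicit pushforward formulas $h_*j^*\psi=40N_0(9\lambda-\delta_0)$, $2h_*E_3=N_0(132\lambda-15\delta_0)$, $2h_*E_0=N_0\delta_0$ from \cite{gk}, together with a bookkeeping of how the universal ramification sections $T_i$ and their residuals meet the boundary components $E_0,E_3$ and the contracted tails. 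The Petri/Maroni locus plays no role whatsoever.

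Your gap is the treatment of the branch locus of $\sigma:\tilde X\to X$. The double cover marking one $g^1_3$ is branched exactly over $j^{-1}(\text{Petri divisor})$, and since the general $C_x$ is a general genus-4 curve (Proposition~\ref{prop:properties C_x}), this pullback is a \emph{divisor} in $X$, not of codimension $\geq2$ as you assert. This divisor is not $W$: the locus $W$ is the nodal-fibre locus, pulling back $\delta_0$, while the Petri divisor is an independent combination of $\lambda$ and the $\delta_i$. Your final sentence, claiming that the Maroni and nodal contributions ``together'' account for the $-p^*[W]$ and $-8p^*[W]$ terms, conflates two unrelated boundary phenomena. It is true that on $\Pic(X)_\QQ=\QQ\cdot H_X$ everything eventually collapses to a multiple of $H_X$, so the answer \emph{can} be reached this way, but extracting the right coefficients then requires precisely the pushforward computations you have deferred to unspecified ``Harris--Mumford relations''. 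The detour through the degree-2 cover buys nothing over the paper's direct computation on $\bar{\cM}_{4,1}$ and introduces a ramification divisor you have not correctly accounted for.
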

\begin{proof}
    We denote by $\tilde{\mathcal{H}}_{4,3}$ the part of the space of admissible covers of genus $4$ and degree $3$,
    with $b=12$ labelled branched points, consisting of the union of the locus of smooth admissible covers and the
    boundary divisors $E_0$ and $E_3$ (see \cite{gk} for the notation) where a point of $E_0$ corresponds to a map $X\to P$,
    with $X=(C_1\cup R_1) \cup C_2$ and $P=\PP^1\cup \PP^1$, with $C_2$ a smooth genus $3$ curve and all
    other curves rational, as in \cite[Proposition 3.1, ii]{gk} for $k=2$. A point of $E_3$ corresponds to a map $X\to
    P$, with $X=S \cup C$, where $C$ is a smooth genus $4$ curve and all other curves rational, as in \cite[Proposition
    4.1, ii]{gk} for $k=2$. We have the diagram of universal curves
    \[
    \xymatrix{
    \overline{{\mathcal M}}_{0,b+1} \ar[d]_{\pi_b} & \mathcal{C}_{\tilde{\mathcal{H}}_{4,3}}
     \ar[d]^t \ar[l]_{f} \ar[r]^{\phi} & \overline{\mathcal M}_{4,1}
      \ar[d]^{\pi} \\
    \overline{\mathcal{M}}_{0,b} & \tilde{\mathcal{H}}_{4,3} \ar[l]_{j} \ar[r]^h & 
    \overline{\mathcal M}_4.
    }
    \]

    The map $h$ is generically finite of degree $N_0=2 \, b!$ which is the number of $g^1_3$'s times the possible
    permutations of the branch points. The map $t$ is equipped with $b$ disjoint sections $\tau_i:
    \tilde{\mathcal{H}}_{4,3} \to \mathcal{C}_{\tilde{\mathcal{H}}_{4,3}}$ corresponding to the $b$ labelled
    ramification points sitting over the labelled branched points. These commute with the natural sections $s_i:
    \overline{\mathcal{M}}_{0,b} \to \overline{\mathcal{M}}_{0,b+1}$, i.e., $f\tau_i= s_ij$. Let $T_i$ be the image of
    $\tau_i$. Note that each ramification point of a 3-to-1 map $F: X \to P$ admits all the labels - in other words,
    if $i \in \{1,...,b\}$ and $p$ is a ramification point of $F$ then there exists an $h\in
    \tilde{\mathcal{H}}_{4,3}$ so that the fibre over $h$ represents the map $F$ and $\tau_i(h)=p$. Therefore the
    images of the $T_i$'s under the map $\phi$ coincide and are equal to the union of all ramification points of all the
    $g^1_3$'s.

    The map $h$ is simply branched at $E_0$ and $E_3$. The map $\phi $ factors via the fibre product $\mathcal{X}=
    \overline{\mathcal{M}}_{4,1}\times _{\overline{\mathcal{M}}_4} \tilde{\mathcal{H}}_{4,3}$. Let 
    \[
    \xymatrix{
    \mathcal{C}_{\tilde{\mathcal{H}}_{4,3}} \ar[r]^\rho \ar[dr]^t & \mathcal{X} \ar[d]^{\tilde{\pi}} \ar[r]^{\tilde{h}} & 
    \overline{\mathcal M}_{4,1} \ar[d]^{\pi} \\
     & \tilde{\mathcal{H}}_{4,3} \ar[r]^h & \overline{\mathcal M}_4
    }
    \]
    the induced diagram, with $\phi= \tilde{h}\rho$. We have $\omega_{\tilde{\pi}}= \tilde{h}^* \omega_{\pi}$. The map
    $h$ is ramified at the points $x\in E_0$ and therefore the fibre of $\tilde{\pi}$ over $x$ has an $A_1$ singularity
    corresponding to the node over the fibre of $\pi$. The map $\rho$ resolves the singularity and the corresponding
    exceptional divisor $\mathcal{C}_1$ (the union of the $C_1$ from above) consists of $(-2)$-curves. In addition, on
    the fibres over $E_0$ the map $\rho$ contracts an exceptional divisor $\mathcal{R}_1$ which consists of
    $(-1)$-curves - blow-up along a smooth locus (the union of the tails $R_1$ as above). The map $h$ is also ramified
    at the points $x\in E_3$ but since the fibre over $x$ maps to a smooth curve, the fibre of $\tilde{\pi}$ over $x$ is
    a smooth curve. The map $\rho$ is the blow-up at the triple ramification point locus of the fibres of $\tilde{\pi}$.
    The exceptional divisor $\mathcal{S}$ consists of $(-1)$-curves (the curves $S$ as above). Since the restriction of
    $\omega_t$ on $(-2)$-curves is trivial as $C_1$ gets contracted via $\phi$, we get that $\omega_t
    =\phi^*\omega_{\pi} + B$ with $B=\mathcal{R}_1+ \mathcal{S}$.

    Let $\alpha$ be a divisor class on $\mathcal{C}_{\tilde{\mathcal{H}}_{4,3}}$. We then have 

    \begin{align*}
    \pi_*(\omega_{\pi} \phi_*\alpha) &= \pi_*\phi_*(\phi^* \omega_{\pi} \, \alpha) \\
    &= h_*t_*(\phi^* \omega_{\pi}\, \alpha)\\
    &= h_*t_*(\omega_t\, \alpha -B\alpha) \\
    &= h_*t_*(\omega_t\, \alpha) - h_*t_*(B\alpha). 
    \end{align*}
    We have $\omega_t=f^* \omega_{\pi_b} + R_f$, with $R_f$ the ramification divisor of the map $f$, that is
    $R_f=\sum_{i=1}^b T_i$. Then 

    \begin{align*}
     h_*t_*(\omega_t\, \alpha) &= h_*t_*(f^* \omega_{\pi_b}\,\alpha) + h_*t_*(R_i\,\alpha)\\
     &= h_*t_*(f^* \omega_{\pi_b}\,\alpha) +\sum_{i=1}^b h_*t_*(\tau_*\tau_i^*\alpha)\\
     &= h_*t_*(f^* \omega_{\pi_b}\,\alpha) +\sum_{i=1}^b h_*(\tau_i^*\alpha).
    \end{align*}

    Suppose now that $\alpha=R_f=\sum_{i=1}^bT_i$. Note that the degree of $R$ over $\overline{\mathcal{M}}_4$ is 2b so
    we have \[R={2\over N_0} \phi_*R_f.\] By \cite[Lemma 4.2]{gk2} we have
    \begin{alignat*}{2}
        \sum_{i=1}^b\tau_i^* R_f &= \sum_{i=1}^b \tau_i^*T_i &&= \sum_{i=1}^b t_*\tau_{i*}\tau_i^*T_i \\ 
        &= \sum_{i=1}^b t_*T_i^2 &&= t_*(R_f^2)=-{1\over 2} j^*\psi
    \end{alignat*}
    and $t_*(f^* \omega_{\pi_b}\,R_f) = j^*\psi$. Therefore $ h_*t_*(\omega_t\, R_f) = {1\over 2}h_* j^*\psi$. Also,
    $t_*(BR_f)=2E_3$. This is because $\mathcal{S}$ has two ramification points and $\mathcal{R}_1$ has none. Therefore
    \[
    \pi_*(\omega_{\pi}\, \phi_*R_f) = {1\over 2} h_*j^*\psi-2h_*E_3.
    \]

    Following formulas in \cite{gk} we have:
    \begin{align*}
        h_*j^* \psi &= 40 N_0 (9\lambda - \delta_0)\\
        2h_*E_3 &= N_0 (132\lambda - 15\delta_0)
    \end{align*}
    and so,
    \begin{align*}
    \pi_*(\omega_{\pi}\, R)& = {2\over N_0}\pi_*(\omega_{\pi}\, 
     \phi_*R_f)=40(9\lambda -\delta_0)-2(132\lambda -15\delta_0) \\ &=
     96\lambda -10\delta_0. 
    \end{align*}
    Writing $R=a \omega_{\pi}+b \pi^*\lambda+ c \pi^* \delta_0$, we get $a=4$ (because the fibre
    has 24 points and $24= 4 \cdot 6$, with $6=2g-2$). This implies
    \[
    \pi_*(\omega_{\pi}\, R)= 4(12\lambda -\delta_0) + 6 b \lambda+
    6 c \delta_0 = (48+6b)\lambda + (-4+6c)\delta_0. 
    \]
    Hence $48+6b= 96$ which implies $b=8$ and $-4+6c =-10$ giving $c =-1$.

    Suppose now that $\alpha=A$ is the sum of the residual sections to the $T_i$'s. Then $R'= {2\over N_0}\phi_*A$. We have
    $ h_*t_*(\omega_t\, A)= h_*t_*(f^* \omega_{\pi_b}\,A)=h_*t_*(f^*\omega_{\pi_b}A)= j^*\psi$ and $B=2E_0+2E_3$. This
    is because both $\mathcal{R}_1$ and $\mathcal{S}$ contain two points which are residual to the ramification points.
    Then 
    \[
    \pi_*(\omega_{\pi}\, \phi_*A) = h_*j^*\psi -2h_*E_0-2h_*E_3.
    \]
    We have $h_*E_0 = {N_0\over 2}\delta_0$. Therefore $\pi_*(\omega_{\pi}\, R_1)={2\over N_0}\pi_*(\omega_{\pi}\,
    \phi_*A)= 80(9\lambda -\delta_0)-2(132\lambda -15\delta_0)-2\delta_0 =456\lambda -52\delta_0 $. Writing now $R'=4
    \omega_{\pi}+b \pi^*\lambda+ c \pi^* \delta_0$ we have as above that $48+6b= 456$ which implies $b=68$
    and $-4+6c =-52$ giving $c =-8$.
\end{proof}


\end{document}